\documentclass[11pt]{amsart}
\usepackage{amssymb,latexsym,amsmath,amscd,amsthm,amsfonts, enumerate}
\usepackage{multirow}
\usepackage{color}
\usepackage[all]{xy}
\usepackage{etex}
\usepackage{caption}
\usepackage{soul}
\usepackage{float}
\usepackage{titletoc}
\usepackage[normalem]{ulem}
\usepackage{graphicx}


\usepackage{tikz,tikz-cd}
\usepackage{mathrsfs}
\usepackage[all]{xy}
\usepackage{tikz}
\usepackage{extarrows}
\usepackage{tikz-cd}
\usetikzlibrary{calc}
\usetikzlibrary{matrix,arrows,decorations.pathmorphing}
\usetikzlibrary{snakes}
\usetikzlibrary{shapes.geometric,positioning}
\usetikzlibrary{arrows,decorations.pathmorphing,decorations.pathreplacing}
\usetikzlibrary{positioning,shapes,shadows,arrows,snakes}

\usepackage[colorlinks=true,hyperindex]{hyperref}
\newcommand\myshade{85}
\colorlet{mylinkcolor}{violet}
\colorlet{mycitecolor}{red}
\colorlet{myurlcolor}{cyan}

\hypersetup{
  linkcolor  = mylinkcolor!\myshade!black,
  citecolor  = mycitecolor!\myshade!black,
  urlcolor   = myurlcolor!\myshade!black,
  colorlinks = true,
}

\usepackage{tabularx}
\newcolumntype{E}{>{\hsize=0.5cm \centering\arraybackslash}X}%
\newcolumntype{C}[1]{>{\hsize=#1\hsize \centering\arraybackslash}X}%

\addtolength{\oddsidemargin}{-0.4in}
        \addtolength{\evensidemargin}{-0.4in}
        \addtolength{\textwidth}{0.8in}
        \addtolength{\topmargin}{-0.4in}
        \addtolength{\textheight}{0.8in}

\numberwithin{equation}{section}
\newtheorem{theorem}{Theorem}[section]
\newtheorem{proposition}[theorem]{Proposition}
\newtheorem{proposition-definition}[theorem]{Proposition-Definition}

\newtheorem{corollary}[theorem]{Corollary}
\newtheorem{lemma}[theorem]{Lemma}

\theoremstyle{definition}
\newtheorem{remark}[theorem]{Remark}
\newtheorem{example}[theorem]{Example}
\newtheorem{definition}[theorem]{Definition}

\newcommand{\End}{\operatorname{End}\nolimits}

\newcommand{\Hom}{\mathrm{Hom}}

\newcommand{\mr}{\mathrm{Mrig(A)}}
\newcommand{\gd}{\mathrm{gldim}}

\newcommand{\Ext}{\mathrm{Ext}}
\newcommand{\add}{\mathsf{add}}

\newcommand{\za}{\alpha}
\newcommand{\zb}{\beta}

\newcommand{\zD}{\Delta}

\newcommand{\zg}{\gamma}
\newcommand{\zG}{\Gamma}

\newcommand{\aaa}{{\bf{a}}}
\newcommand{\bbb}{{\bf{b}}}
\newcommand{\ccc}{{\bf{c}}}

\newcommand{\pd}{\operatorname{{\rm pd }}}

\newcommand{\ma}{\operatorname{{\rm mod-A }}}
\newcommand{\urm}{\operatorname{{\rm \underline{mod}-A }}}
\newcommand{\urmo}{\operatorname{{\rm \underline{mod}-A^{op} }}}
\newcommand{\tr}{\operatorname{{\rm Tr }}}%

\newcommand{\cals}{\mathcal{S}}

\newcommand{\calc}{\mathcal{C}}

\newcommand{\call}{\mathcal{L}}

\newcommand{\calm}{\mathcal{M}}

\newcommand{\calp}{\mathcal{P}}

\newcommand{\bbp}{\mathbb{P}}


%
%

%


\definecolor{dark-green}{RGB}{14,150,2}
\definecolor{red}{RGB}{250,0,0}

\newcommand{\bpoint}{\circ}
\newcommand{\rpoint}{\color{red}{\bullet}}

\begin{document}

\title[Maximal rigid modules and applications to higher Auslander-Reiten theory]{Maximal rigid modules over a gentle algebra and applications to higher Auslander-Reiten theory}


\author{Wen Chang}
\address{School of Mathematics and Statistics, Shaanxi Normal University, Xi'an 710062, China}
\thanks{The author is supported by the Fundamental Research Funds
	for the Central Universities (No. GK202403003) and the NSF of China (Grant No. 12271321).}
\email{changwen161@163.com}

\keywords{}

\thanks{}

%

\subjclass[2010]{16D90, 
16E35, 
57M50}

\begin{abstract}
We construct a bijective correspondence between the set of rigid modules over a gentle algebra and the set of admissible arc systems on the associated coordinated-marked surface. In particular, a maximal rigid module aligns with an equivalence class of admissible $5$-partial triangulations, which is an (admissible) set of simple arcs dissecting the surface into $s$-gons with $3\leqslant s\leqslant 5$. Furthermore, the rank of the maximal rigid module is equal to the rank of the algebra plus the number of internal $4$-gons and $5$-gons in the associated $5$-partial triangulation.

Subsequently, these results facilitate an exploration of the higher Auslander-Reiten theory for gentle algebras with global dimension $n$. The $\tau_m$-closures of injective modules are realized as admissible $(m+2)$-partial triangulations, where $\tau_m$ are higher Auslander-Reiten translations with $2\leqslant m \leqslant n$. Finally, we provide a complete classification of gentle algebras that are $\tau_n$-finite or $n$-complete introduced by Iyama \cite{I11}.
\end{abstract}

\maketitle
\setcounter{tocdepth}{2} 

\tableofcontents

\section*{Introduction}\label{Introductions}
In the representation of algebras, rigid modules play an important role, that is, the modules with trivial (once) self-extension. Notably, tilting modules, which give rise to derived equivalences of algebras, are special rigid modules. The theme of this paper is to classify the rigid modules, especially the maximal rigid modules, on gentle algebras, and subsequently employ these findings in the domain of higher Auslander-Reiten theory.

Gentle algebras emerged in the 1980s as a generalization of iterated tilted algebras of type $A_n$ \cite{AH81}, and affine type $\widetilde A_n$ \cite{AS87}. They are a classical and enduring object in the representation theory of algebras. In particular, rigid modules over gentle algebras are studied in \cite{S99}. It has been demonstrated that the endomorphism algebra of a rigid module over a gentle algebra remains a gentle algebra.

In recent years, topological and geometric methods have been widely applied in studying gentle algebras.
The geometric models for the module categories and the derived categories over gentle algebras are established using the surface models, respectively, in \cite{BC21} and \cite{OPS18} (see \cite{ABCJP10} for the special case of gentle algebras arising from surface triangulations). More generally, the derived categories of graded gentle algebras can be realized as partially wrapped Fukaya categories of graded marked surfaces, see \cite{HKK17,LP20}.
In \cite{C25}, the author refines the geometric model for the module category of a gentle algebra in \cite{BC21}, and then integrates it with the geometric model of the derived category in \cite{OPS18}, in the sense that each so-called zigzag curve on the surface represents
an indecomposable module as well as the minimal projective resolution of this module.
In particular, extensions of modules can thus be easily visualized as \emph{weighted-oriented intersections} on the surface, a feature that is important in studying rigid modules over gentle algebras in this paper.

More precisely, we consider a \emph{coordinated-marked surface} $(\cals,\calm,\zD^*)$, that is, a marked surface with a lamination considered in \cite{APS23,OPS18}, where $\cals$ is an oriented surface with boundaries, $\calm$ is a set of marked $\bpoint/\rpoint$-points on $\cals$, and $\zD^*$ is a set of simple arcs with $\rpoint$-endpoints which cut $\cals$ into polygons, each of which contains exactly one $\bpoint$-point. Then a gentle algebra $A=A(\zD^*)$ can be constructed, and conversely any gentle algebra is of this form \cite{BC21,OPS18,PPP19}. Furthermore, indecomposable (string) modules over $A$ correspond to a special kind of $\bpoint$-arcs called \emph{zigzag arcs}, that is, an arc with $\bpoint$-points as endpoints and factor through polygons of $\zD^*$ along the corners.

A collection of simple zigzag arcs on $(\cals,\calm, \zD^*)$ is called a \emph{s-partial triangulation}, if the arcs cut the surface into polygons, each of which contains at most one $\rpoint$-point from $\calm_{\rpoint}$, and the number of the edges of each polygon is bounded by $s$, see Definition \ref{def:par-tri}.
Furthermore, if the weight of any oriented intersection of arcs in a $s$-partial triangulation differs from one, then it is an \emph{admissible s-partial triangulation}, see Definition \ref{def:par-tri2}.

The first main result of this paper is as follows.

\begin{theorem}(Theorem \ref{thm:max-m2})
There is a bijection between the set of equivalent classes of admissible $5$-partial triangulations on $(\cals,\calm,\zD^*)$ and the set of the maximal rigid modules in $\ma$.
\end{theorem}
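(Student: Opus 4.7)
The plan is to leverage the general correspondence between rigid $A$-modules and admissible arc systems on $(\cals,\calm,\zD^*)$, which I take as established earlier in the paper, and then to identify precisely which admissible arc systems correspond to the \emph{maximal} rigid modules. Concretely, an indecomposable $A$-module corresponds to a zigzag arc on the surface, and $\Ext^1(M_i,M_j)\neq 0$ is detected by a weighted-oriented intersection of weight one between the corresponding arcs, in the sense of \cite{C23}. Thus a direct sum $M=\bigoplus M_i$ is rigid if and only if the collection of zigzag arcs representing its summands forms an admissible arc system, and a rigid $M$ is maximal if and only if this arc system cannot be enlarged while remaining admissible.

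The main step is a geometric characterization of this maximality: an admissible arc system $\zG$ admits no proper admissible enlargement if and only if the complement of $\zG\cup\zD^*$ in $\cals$ consists of polygons each of which has at most $5$ sides. For the ``only if'' direction I would show that any polygon $\Pi$ with $6$ or more sides admits an explicit zigzag diagonal, chosen so that its weighted-oriented intersections with the sides of $\Pi$ (and hence with all arcs of $\zG$) have weights different from one; such a diagonal enlarges $\zG$ admissibly and contradicts maximality. For the converse I would check, by going through the finitely many combinatorial types of $3$-, $4$- and $5$-gons (keeping track of the number and position of interior $\rpoint$-points and of the orientation pattern of the sides), that every candidate diagonal either fails to be a zigzag arc or produces a forbidden weight-one intersection with a side of the polygon. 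This polygon case analysis is where the bound $5$, rather than $3$ or $4$, enters, and it is the main technical obstacle of the argument.

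Finally, the statement concerns equivalence classes of $5$-partial triangulations, so I would spell out the equivalence relation (two admissible $5$-partial triangulations being equivalent when they are related by flips performed inside a common $4$- or $5$-gon, together with the boundary/isotopy moves already built into the general admissible arc correspondence), verify that the module $\bigoplus M_i$ depends only on the equivalence class of the triangulation, and check that distinct classes yield non-isomorphic maximal rigid modules. The latter is a consequence of the injectivity of the indecomposable arc/module correspondence together with the fact that an internal $4$-gon or $5$-gon forces the presence of a specific new indecomposable summand. Combining the general admissible-arc/rigid-module bijection with the polygon maximality criterion and this equivalence-class bookkeeping yields the desired bijection; as a by-product, the rank formula announced in the abstract (rank of $A$ plus the number of internal $4$-gons and $5$-gons) drops out of the same polygon analysis, since each such internal polygon is precisely the source of one extra indecomposable summand beyond the contribution of a genuine triangulation by $3$-gons.
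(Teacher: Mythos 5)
Your overall strategy — pass through the admissible arc system $\leftrightarrow$ rigid module bijection and then characterize maximality in terms of the polygon decomposition — matches the paper, but several of the key steps you sketch either misstate what has to be proven or seriously underestimate the work involved.

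First, the equivalence relation is not what you describe. You propose that two admissible $5$-partial triangulations are equivalent when related by flips inside a common $4$- or $5$-gon; in fact the paper's equivalence allows only the following move: in an external $5$-gon (type \textbf{F3}) one may add the one admissible diagonal that splits it into an internal $4$-gon (type \textbf{F4}) and an external triangle (type \textbf{F1}). No arc can ever be added inside an internal $4$-gon or internal $5$-gon, since by Lemma~\ref{lem:sumwt} the weights around a triangle of zigzag arcs sum to $1$, so some weight equals $1$ and the resulting internal triangle is forbidden in an admissible system (Lemma~\ref{lem:ply}). If you build the equivalence from moves in arbitrary $4$- or $5$-gons, the classes are wrong and the map $[\calp_5]\mapsto \oplus M_{\zg}$ is no longer well-defined or injective. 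You should also note that the representative inside an equivalence class that actually encodes the module is the \emph{maximal} one (Lemma~\ref{lem:5partmax}), and that if an admissible $4$-partial triangulation belongs to a class then it is that maximal element.

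Second, for the ``only if'' direction your claim that any $m$-gon with $m\ge 6$ admits an explicit zigzag diagonal of admissible weight is the crux, and it is not an easy case check: the candidate diagonal must be zigzag with respect to the fixed coordinate $\zD^*$, and the weights at its endpoints depend entirely on how $\zD^*$ sits inside the polygon. The paper does not attempt to produce a diagonal directly on the ambient surface; it first cuts along the edges of the offending $m$-gon (Proposition~\ref{prop:red2}) so that the polygon becomes a disk, then handles two genuinely different structural cases on the disk according to whether $\zD^*$ restricted to it consists only of arcs isotopic to boundary segments or not (Lemmas~\ref{lem:disk1}, \ref{lem:disk2}); separately, Proposition~\ref{lem:ply2} is needed to rule out external $m$-gons with $m\ge 6$. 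Your sketch compresses all of this into ``choose a diagonal with the right weights'', which is exactly the step where the constraints imposed by $\zD^*$ bite; without the cutting-surface reduction and the two disk lemmas the argument has a genuine gap. A smaller point: the polygons of a $5$-partial triangulation are cut out by the arc system alone, not by $\zG\cup\zD^*$; mixing in $\zD^*$ would change the combinatorics being bounded by $5$.

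Finally, your converse direction via an exhaustive enumeration of $3$-, $4$-, $5$-gon shapes with interior marked points and edge orientations would be both long and unnecessary: the paper's observation that a diagonal inside \textbf{F4} or \textbf{F5} always creates an internal triangle, together with Lemma~\ref{lem:ply}, disposes of the converse in one stroke, and that is where the bound $5$ really comes from.
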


An admissible $5$-partial triangulation on $(\cals,\calm,\zD^*)$ is formed by polygons of the types $\textrm{\textbf{Fi}}, 1 \leqslant \textrm{\textbf{i}} \leqslant 5,$ in Figure \ref{fig:ply}.
We call two $5$-partial triangulations equivalent if the only possible different polygons of them are external $5$-gons, that is, polygons of type $\textbf{F3}$ in Figure \ref{fig:ply}. Then the bijection in the above theorem maps the maximal element in an equivalent class of admissible $5$-partial triangulations to a maximal rigid module which is a direct sum of the modules associated with the arcs in it. 

Noticing that no internal triangles, that is, the triangles formed by $\bpoint$-arcs, in an admissible arc system of simple zigzag arcs, see Lemma \ref{lem:ply}. On the other hand, adding one more arc to an internal $4$-gon or $5$-gon will produce an internal triangle. Therefore, the $5$-partial triangulation is a candidate to correspond to a rigid module with \emph{maximal property}.
It is well-known that any oriented (marked) surface has triangulations. In particular, a set of simple arcs can always be completed as a triangulation. However, in our setting, we have two more constraints when we consider admissible partial triangulations. 

(1) the arcs must be zigzag; 

(2) the weights of the oriented intersections must be different from one. 

The first constraint makes sure that the arc gives rise to a module over the gentle algebra, while the second one ensures that the direct sum of these modules is rigid. 
Both of these conditions are applied with respect to the predetermined ``coordinate" $\zD^*$. Therefore, an interesting phenomenon revealed by the above theorem is that although we have these constraints, we can always complete an admissible set of simple zigzag arcs to an admissible $5$-partial triangulation, disregarding the choice of $\zD^*$. 

We mention that a similar correspondence is established for \emph{maximal almost rigid modules} in \cite[Theorem 1.2]{BCGS24}, where a special kind of once-self-extension is allowed for an almost rigid module, and on the associated surface an internal triangle is allowed.

The second main result of this paper is as follows.

\begin{theorem}(Theorem \ref{thm:max-m3})
Let $A$ be a gentle algebra with rank $n$, and let $M$ be the maximal rigid module in $\ma$ associated with a $5$-partial triangulation $\calp_5$. Then the rank of $M$ equals 
$$e_1=n+f_4+f_5,$$
where $f_4$ and $f_5$ are, respectively, the numbers of the internal $4$-gons and $5$-gons in $\calp_5$.
\end{theorem}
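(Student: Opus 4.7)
The rank of $M$ equals the number of indecomposable summands, which by Theorem \ref{thm:max-m2} is exactly the number $|\calp_5|$ of arcs in the $5$-partial triangulation; hence the task reduces to proving $|\calp_5| = n + f_4 + f_5$. My strategy is a double application of Euler's formula $V - E + F = \chi(\cals)$ on the surface $\cals$, comparing the CW decomposition induced by the coordinate dissection $\zD^*$ with the one induced by $\calp_5$. Both share the same vertex set (all marked points in $\calm$) and the same boundary segments, differing only in interior arcs and resulting polygons. The $\zD^*$-decomposition has $n$ arcs and, by the defining property of a coordinated-marked surface, exactly $|\calm_\bpoint|$ faces (one per $\bpoint$-point); the $\calp_5$-decomposition has $|\calp_5|$ arcs and $F_{\calp_5}$ faces. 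Subtracting the two Euler identities cancels the common vertex and boundary contributions and yields the key relation
$$|\calp_5| - n \;=\; F_{\calp_5} - |\calm_\bpoint|.$$

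It then remains to evaluate $F_{\calp_5} - |\calm_\bpoint|$. I would compute $F_{\calp_5}$ using the polygon classification \textbf{F1}--\textbf{F5} of Figure \ref{fig:ply}: since no arc of $\calp_5$ has a $\rpoint$-endpoint, each $\rpoint$-point lies on the boundary of exactly one polygon, and combined with the admissibility bound of at most one $\rpoint$-point per polygon this produces a bijection between $\calm_\rpoint$ and the external polygons of $\calp_5$. Lemma \ref{lem:ply} rules out internal triangles, so the internal polygons are precisely the internal $4$-gons and $5$-gons, contributing $f_4 + f_5$. Hence $F_{\calp_5} = |\calm_\rpoint| + f_4 + f_5$. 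Invoking the alternation convention of coordinated-marked surfaces, which forces $|\calm_\bpoint| = |\calm_\rpoint|$, substitution into the Euler identity yields $|\calp_5| = n + f_4 + f_5$, as desired.

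The main obstacle is the combinatorial bookkeeping: one must carefully establish the bijection between external polygons of $\calp_5$ and $\rpoint$-points, verify via Lemma \ref{lem:ply} that internal polygons are only $4$- or $5$-gons, and invoke the equality $|\calm_\bpoint| = |\calm_\rpoint|$ coming from the alternation built into the surface model. Once these pieces are in place, the Euler-characteristic cancellation produces the rank formula mechanically, so the real content lies in the combinatorics of admissible polygon types rather than in the topological count itself.
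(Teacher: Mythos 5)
Your proposal is correct and is essentially the same Euler-characteristic argument as the paper's proof. The paper applies $f-e+v=\chi(\cals)=2-2g-b$ only to the CW complex given by $\calp_5$, using $v=2m+p$, $e=e_1+2m$, $f=m+f_4+f_5$, and then eliminates the surface invariants $g,b,p,m$ by quoting the known formula $n=2g-2+b+p+m$ for the rank of a gentle algebra in terms of its surface. You instead run Euler's formula twice, once for $\zD^*$ and once for $\calp_5$, and cancel the topological contributions by subtraction; this implicitly re-derives the formula for $n$ rather than citing it, which makes your version a touch more self-contained but otherwise identical in content. Your identification of the external polygons with $\calm_\rpoint$ (via the two adjacent boundary segments at each $\rpoint$-point) and the use of Lemma \ref{lem:ply} to exclude internal triangles match the paper's count $f_1+f_2+f_3=m$, $f=m+f_4+f_5$.

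One small slip in the setup: the two CW complexes do not share the same vertex set when $\calp_\bpoint\neq\emptyset$, because the arcs of $\zD^*$ have only $\rpoint$-endpoints, so punctures are interior to polygons of $\zD^*$ and are not $0$-cells of that decomposition. Consequently $F_{\zD^*}$ is $|\calm_\bpoint|+|\calp_\bpoint|$, not $|\calm_\bpoint|$. These two adjustments (the $\zD^*$ decomposition has $|\calp_\bpoint|$ fewer vertices and $|\calp_\bpoint|$ more faces than you wrote) cancel in $V-E+F$, so your final relation $|\calp_5|-n=F_{\calp_5}-|\calm_\bpoint|$ and the conclusion $e_1=n+f_4+f_5$ are unaffected; but the stated cancellation of "common vertex contributions" is not literally accurate in the punctured case and is worth making precise. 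Also, the "at most one $\rpoint$-point per polygon" condition you invoke is part of the definition of an $s$-partial triangulation (Definition \ref{def:par-tri}), not of admissibility (Definition \ref{def:par-tri2}).
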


In particular, the rank of any maximal rigid module in $\ma$ is not less than that of the algebra.
Furthermore, any maximal rigid module in $\ma$ has rank $n$ if and only if $A$ is a hereditary algebra of type $A$ or of affine type $A$, see Corollary \ref{thm:ply}. On the other hand, if we consider the self-orthogonal modules, that is, the modules without self-extensions of all degrees (not just for degree one), then the rank may be strictly less than the rank of the algebra, see, for example, \cite[Theorem 2.23]{C24}.

The main idea used in the proofs of the first part is to reduce the problem to those on the disks by using a cutting surface technique introduced in \cite{C24}.

The second part of this paper applies the above results to the higher Auslander-Reiten theory of gentle algebras. 
As a higher analogue of the classical Auslander-Reiten theory introduced in the 1970s, the higher Auslander-Reiten theory was introduced by Iyama in 2004
\cite{I07a,I07b}. In addition to representation theory
\cite{HI11,IO11,M14}, it has exhibited connections to
commutative algebra, commutative and non-commutative algebraic geometry, and
combinatorics, see for example
\cite{AIR15,HIMO23,HIO14,HIO14,IW14,OT12}. 

In the higher Auslander-Reiten theory, the higher Auslander-Reiten translation $\tau_n$ is a fundamental concept that induces an equivalence of the stable categories of $n$-cluster tilting subcategories in $\ma$, which is a central research object in the higher Auslender-Reiten theory. A $n$-cluster tilting subcategory contains the $\tau_n$-closure of the injective modules. In particular, if the global dimension of $A$ does not exceed $n$ and $\ma$ has a $n$-cluster tilting module, then the $\tau_n$-orbit of injective modules gives rise to the unique $n$-cluster tilting subcategory of $\ma$. 

In other words, there are not many $n$-cluster tilting modules in $\ma$. For example, it is proved in \cite{HJS22} that for a gentle algebra $A$, if $\ma$ contains a $n$-cluster
tilting subcategory for some $n \geqslant 2$, then $A$ is a Nakayama algebra with zero radical square. In \cite{I11} a relative version of $n$-cluster tilting modules is introduced. More precisely, the $\tau_n$-orbit of injective modules may be a (relative) $n$-cluster tilting module in the right-perpendicular category of a tilting module, rather than in the whole module category. When the tilting module is the regular module $A$, we go back to the original $n$-cluster tilting settings.

Furthermore, Iyama introduced so-called \emph{$n$-complete algebras}, see Definition \ref{complete}, such that the existence of (relative) $n$-cluster tilting modules is guaranteed iteratively. Note that $n$-complete algebra is a special kind of the so-called \emph{$\tau_n$-finite algebra}, which is an algebra with global dimension bounded by $n$ and $\tau_n^\ell(DA)=0$ for sufficiently large $\ell$ and the injective module $DA$. 

We will give a classification of gentle algebras that are $\tau_n$-finite or $n$-complete.
In the first place, we study the $\tau_m$-orbit of injectives by using surface models. It is proven that for $2\leqslant m\leqslant n$, the $\tau_m$-orbit of the injective modules in $\ma$ gives rise to an admissible $(m+2)$-partial triangulation. Therefore, the direct sum of the modules in the orbit is a rigid module. In particular, the $\tau_2$-orbit of the injective modules gives rise to an admissible $4$-partial triangulation, for which the associated module is a maximal rigid module, see Proposition \ref{prop:geoinjcl}.

Then we introduce \emph{$\tau_n$-sequences} and \emph{$\tau_n$-cycles} in Definition \ref{def:tseq}, which are given by combinatorial of walks in the algebra.
The following theorem gives a classification of $\tau_n$-finite gentle algebras and $n$-complete gentle algebras.

\begin{theorem}(Theorem \ref{thm:taufinite}, Theorem \ref{thm:ncomp})
Let $A$ be a gentle algebra with the global dimension equal to $n$.

(1) $A$ is $\tau_n$-finite if and only if there is no $\tau_n$-cycle in $A$.

(2) $A$ is $n$-complete if and only if, for any walk $\sigma=\aaa_1\cdots\aaa_{n}$ in $A$ with full relations, the degree of the source is one, or equivalently, $\aaa_1$ is the unique arrow adjacent to it.
\end{theorem}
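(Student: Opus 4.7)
The plan is to exploit the geometric translation of $\tau_m$-closures provided by Proposition \ref{prop:geoinjcl}. Under this correspondence, iterating $\tau_n$ on an injective module amounts to sliding the corresponding zigzag arc along a walk with full relations in $A$: each application of $\tau_n$ reads off the next arrow, so the orbit $\{\tau_n^\ell(DA)\}_{\ell\geqslant 0}$ is in bijection with the collection of $\tau_n$-sequences in $A$ (Definition \ref{def:tseq}) starting at the vertices corresponding to the indecomposable injectives. This dictionary is what drives both equivalences, and its verification is a matter of unpacking how the geometric action of $\tau_n$ on a zigzag arc, already encoded in the $(n+2)$-partial triangulation built from the injective orbit, corresponds to prepending an arrow satisfying the full-relations condition to the walk labeling that arc.

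For part (1), I would argue by contrapositive in the forward direction: a $\tau_n$-cycle yields a walk that returns to itself, so the induced $\tau_n$-orbit never terminates and $\tau_n^\ell(DA)\neq 0$ for all $\ell$. Conversely, if every $\tau_n$-sequence in $A$ is finite, then since each injective indecomposable produces exactly one sequence, and there are only $n$ such injectives, we may take the maximum of their lengths to obtain an $\ell_0$ with $\tau_n^{\ell_0}(DA)=0$. The only subtlety is ruling out the existence of arbitrarily long finite $\tau_n$-sequences in the absence of a $\tau_n$-cycle, which follows from the finiteness of the arrow set together with a pigeonhole argument applied to the walks with full relations.

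For part (2), I would recall Iyama's recursive criterion for $n$-completeness: beyond $\tau_n$-finiteness, the endomorphism algebra of the resulting $n$-cluster tilting module must itself carry an analogous structure in one higher dimension. Translating this to the geometric and combinatorial model, the recursive step requires that each iterated $\tau_n$-orbit be initiated at a well-defined, unique source; this is exactly the condition that the first arrow $\aaa_1$ of any walk $\sigma=\aaa_1\cdots\aaa_n$ with full relations is the unique arrow at its source. I would prove the forward direction by showing that a source of degree greater than one produces two distinct walks with the same starting vertex, breaking the uniqueness required for the tower of $n$-cluster tilting modules; the reverse direction builds the tower inductively, using the source-uniqueness to show that the endomorphism algebra of the constructed $n$-cluster tilting module is again a gentle algebra satisfying the hypothesis of part (2) for $n+1$.

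The main obstacle is part (2), specifically matching Iyama's abstract recursive requirement for $n$-completeness with the elementary walk-theoretic statement. This requires verifying that the endomorphism algebra of $\bigoplus_{\ell\geqslant 0}\tau_n^{-\ell}(A)$ admits a coordinated-marked surface presentation in which the walk structure descends cleanly, so that the degree-one-source condition is preserved under the recursion. I expect this to demand a careful analysis of how cutting and gluing operations on the surface interact with the $\tau_n$-action, building on the cutting-surface technique of \cite{C24} that was used in the first part of the paper.
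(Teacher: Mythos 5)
Your approach to part (1) is essentially the paper's: Lemma \ref{lem:geost} translates $\tau_n$ into a combinatorial push along a walk with full relations, and the $\tau_n$-closure of injectives is read off as a $\tau_n$-sequence. Two small inaccuracies worth noting: the relevant input is Lemma \ref{lem:geost} (the formula $\tau_m M = M_{p_1}^{s_1-\omega_1+m}\oplus M_{p_2}^{s_2-\omega_2+m}$) rather than Proposition \ref{prop:geoinjcl}, and your remark that ``each injective indecomposable produces exactly one sequence'' glosses over the fact that $\tau_n$ a priori has two potential summands, one at each endpoint of the arc; ruling out branching (or tracking both branches) needs a word. With those caveats, part (1) is fine.

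Part (2) has a genuine gap. You base the argument on a ``recursive criterion'' for $n$-completeness: that the cone (endomorphism algebra of the additive generator of $\mathfrak{M}$) must itself carry the analogous structure one dimension higher. But that is not the definition — it is Iyama's Theorem 1.14 in \cite{I11}, a \emph{consequence} of $n$-completeness, not a characterization of it. Definition \ref{complete} lists three concrete conditions: $(A_n)$, the existence of a tilting module $T$ with $\mathfrak{P}(\mathfrak{M})=\add T$; $(B_n)$, that $\mathfrak{M}$ is $n$-cluster tilting in $T^\perp$; and $(C_n)$, that $\Ext^i(\mathfrak{M}_P,A)=0$ for $1\leqslant i\leqslant n-1$. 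The proof must verify these directly, and the paper's Lemma \ref{lem:comp1} does so for the forward direction: an arrow ending at the source $\ell^*_j$ would give $\Ext^1(I_j,P_j)\neq0$, violating $(C_n)$, while an extra arrow starting at $\ell^*_j$ would force $\mathfrak{P}(\mathfrak{M})$ to have rank strictly less than $n$, so it cannot be $\add T$ for a tilting module, violating $(A_n)$. Your phrase ``breaking the uniqueness required for the tower'' identifies neither of these failures. Likewise, your converse direction — ``build the tower inductively'' — proves something about the cone, not that $A$ itself satisfies $(A_n)$–$(C_n)$; the paper instead computes the arc systems $\calp_{\mathfrak{M}}$, $\zD_T$, and $\calp_{T^\perp}$ explicitly (Lemma \ref{lem:3.12}) and checks the three conditions by inspecting weights of oriented intersections. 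Finally, you write $\bigoplus_{\ell\geqslant0}\tau_n^{-\ell}(A)$ where the object in question is the $\tau_n$-closure of $DA$ under forward translation, another sign the argument has not been pinned down.
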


\section{Preliminaries}
\label{Preliminaries}

In this paper, a quiver will be denoted by $Q=(Q_0, Q_1)$, where $Q_0$ is the set of vertices and $Q_1$ is the set of arrows. The numbers of the vertices and the arrows of $Q$ are $|Q_0|$ and $|Q_1|$, respectively. For an arrow $\aaa$, $s(\aaa)$ is the source, and $t(\aaa)$ is the target of it.
The arrows in a quiver are composed from left to right as follows: for the arrows $\aaa$ and $\bbb$ we write $\aaa\bbb$ for the path from the source of $\aaa$ to the target of $\bbb$.

An algebra $A$ will be assumed to be basic with finite dimension over an algebraically closed field $k$. In general, we consider the right modules, where $\ma$ is the category of finite-dimensional modules over $A$.
For a module $M$, we denote by $\add M$ the subcategory of
$\ma$ consisting of direct summands of finite direct sums of
copies of $M$. For example, $\add A$ is the category
of finitely generated projective $A$-modules, and
$\add DA$ is the category
of finitely generated injective $A$-modules.

\subsection{Marked surfaces}\label{subsection: geo-module categories}

We recall some concepts about marked surfaces, and the construction of gentle algebras from coordinates of the surfaces, for which there are many references such as \cite{BC21,HKK17,LP20,PPP19,PPP21}, in this paper, we closely follow \cite{OPS18} and \cite{APS23}.

\begin{definition}
	\label{definition:marked surface}
	A \emph{marked surface} is a pair $(\cals,\calm)$, where
	\begin{enumerate}[\rm(1)]
		\item $\cals$ is an oriented surface with non-empty boundaries with
		connected components $\partial \cals=\sqcup_{i=1}^{b}\partial_i \cals$;
\item $\calm = \calm_{\bpoint} \cup \calm_{\rpoint} \cup \calp_{\bpoint}$ is a finite set of \emph{marked points} on $\cals$. The elements of~$\calm_{\bpoint}$ and~$\calm_{\rpoint}$ are on the boundary of $\cals$, which will be respectively represented by symbols~$\bpoint$ and~$\rpoint$. Each connected component $\partial_i \cals$ is required to contain at least one marked point of each color, where the points~$\bpoint$ and~$\rpoint$ are alternating on $\partial_i \cals$. The elements in $\calp_{\bpoint}$ are in the interior of $\cals$. We refer to these points as \emph{punctures}, and we will also represent them by the symbol $\bpoint$.
	\end{enumerate}
\end{definition}
Let $(\cals,\calm)$ be a marked surface.
	\begin{enumerate}[\rm(1)]
		\item An \emph{arc} is a non-contractible curve, with endpoints in~$\calm_{\bpoint}\cup\calm_{\rpoint}$. It is an $\rpoint$-\emph{arc} if the endpoints are from $\calm_{\rpoint}$, and it is an $\bpoint$-\emph{arc} if the endpoints are from $\calm_{\bpoint}\cup\calp_\bpoint$.
		\item A \emph{loop} is an arc whose endpoints coincide.

		\item A \emph{simple arc} is an arc without interior self-intersections.
	\end{enumerate}

In order for some definitions and notations to be well-defined in the case of a loop, we will treat the unique endpoint of a loop as two distinct endpoints.
On the surface, all curves are considered up to homotopy with respect to the boundary components and the punctures, and all intersections of curves are required to be transversal.

To realize the gentle algebras and associated module categories on marked surfaces, we need the following so-called coordinates of the marked surfaces.

\begin{definition}\label{definition:addmissable dissections}
\begin{enumerate}[\rm(1)]
\item A collection of simple $\rpoint$-arcs is called a \emph{simple coordinate}, if the arcs have no interior intersections and they cut the surface into polygons each of which contains exactly one $\bpoint$-point  from $\calm_{\bpoint}\cup\calp_{\bpoint}$. 
\item  A collection of simple $\bpoint$-arcs is called an \emph{dissection}, if the arcs have no interior intersections and cut the surface into polygons, each of which contains exactly one $\rpoint$-point from $\calm_{\rpoint}$.
\end{enumerate}
\end{definition}

We denote a simple coordinate by 
$\zD^*$, and call the triple $(\cals,\calm,\zD^*)$ a \emph{coordinated-marked surface}. Then there is a unique dissection $\zD$ that is the \emph{dual} of $\zD^*$, that is, each $\rpoint$-arc in $\zD^*$ intersects a unique $\bpoint$-arc in $\zD$ only once. We call $\zD$ the \emph{simple dissection} (with regard to $\zD^*$).

Let $\ell^*$ be an $\rpoint$-arc in $\zD^*$. We call an $\bpoint$-arc $t^{-1}(\ell^*)$ obtained from $\ell^*$ by anticlockwise rotating both endpoints to the next $\bpoint$-points on the same boundary components the \emph{anti-twist} of $\ell^*$, where anticlockwise means the interior of the surface is on the left when following a boundary component.
Then it is easy to see that the set $\{t^{-1}(\ell^*), \ell^*\in \zD^*\}$ is a dissection of the surface, which we
denote by $\zD_I$ and call \emph{injective dissection}. We dually define \emph{twist} $t(\ell^*)$ and call $\zD_P:=\{t(\ell^*), \ell^*\in \zD^*\}$ \emph{projective dissection}.

We mention that there are different names for the coordinates and dissections introduced above, for example, full formal arc systems \cite{HKK17}, admissible dissections \cite{APS23}, laminations \cite{OPS18}, and partial triangulations \cite{BC21}. 
In this paper, the words ``admissible" and ``partial triangulation" are introduced below, which have different meanings.
The names of simple dissection, injective dissection, and projective dissection will be verified in Proposition \ref{theorem:main arcs and objects}, that is, each arc in the dissection gives rise to a simple module, an injective module, and a projective module, respectively, in the module category of the associated gentle algebra.

\subsection{Gentle algebras from simple coordinates}\label{subsection: gentle algebras}

We recall how to construct a gentle algebra from a coordinated-marked surface $(\cals,\calm,\zD^*)$, where $\zD^*=\{\ell_i^*, 1\leqslant i \leqslant n\}$.

\begin{definition}\label{definition:gentle algebras}
We call an algebra $A=kQ/I$ a \emph{gentle algebra}, if $Q=(Q_0,Q_1)$ is a finite quiver and $I$ is an admissible ideal of $kQ$ satisfying the following conditions:
\begin{enumerate}[\rm(1)]
 \item Each vertex in $Q_0$ is the source of at most two arrows and the target of at most two arrows.

 \item For each arrow $\aaa$ in $Q_1$, there is at most one arrow $\bbb$ such that  ${\bf{0}} \neq \aaa\bbb\in I$; at most one arrow $\ccc$ such that  ${\bf{0}} \neq \ccc\aaa\in I$; at most one arrow $\bbb'$ such that $\aaa\bbb'\notin I$; at most one arrow $\ccc'$ such that $\ccc'\aaa\notin I$.

 \item $I$ is generated by paths of length two.
\end{enumerate}
\end{definition}

\begin{definition}\label{definition:oriented intersection}
Let $q$ be a common endpoint of arcs $\ell_i^*, \ell_j^*$ in $\zD^*$. An \emph{oriented intersection} from $\ell_i^*$ to $\ell_j^*$ at $q$ is an anticlockwise angle locally from $\ell_i^*$ to $\ell_j^*$ based at $q$ such that the angle is in the interior of the surface. An oriented intersection is \emph{minimal} if it is not a composition of two oriented intersections of arcs from $\zD^*$.
\end{definition}

\begin{definition}\label{definition:gentle algebra from dissection}
	We define the algebra $A(\zD^*)$ as the quotient of the path algebra $kQ(\zD^*)$ of the quiver $Q(\zD^*)$ by the ideal $I(\zD^*)$ defined as follows:
	\begin{enumerate}[\rm(1)]
		\item The vertices of $Q(\zD^*)$ are given by the arcs in $\zD^*$.
		\item Each minimal oriented intersection $\aaa$ from $\ell_i^*$ to $\ell_j^*$ gives rise to an arrow from $\ell_i^*$ to $\ell_j^*$, which is still denoted by $\aaa$.
		\item The ideal $I(\zD^*)$ is generated by paths $\aaa\bbb:\ell_i^*\rightarrow \ell_j^*\rightarrow \ell_k^*$, where the common endpoint of $\ell_i^*$ and $\ell_j^*$, and the common endpoint of $\ell_j^*$ and $\ell_k^*$ that respectively gives rise to $\aaa$ and $\bbb$ are different.
	\end{enumerate}
\end{definition}

Then it is not hard to check that $A(\zD^*)$ is a gentle algebra. 
Conversely, it is proved in \cite{BC21,OPS18,PPP19} that any gentle algebra arises from this way. So this establishes a bijection between the set of homeomorphism classes of coordinated-marked surfaces and the set of isomorphism classes of gentle algebras.

\begin{example}\label{ex:gen-alg}
See Figure \ref{fig:alg-ex} for an example of a coordinated-marked surface and the associated gentle algebra.
 \begin{figure}[ht]
		\begin{center}
\begin{tikzpicture}[>=stealth,scale=0.6]
\draw[line width=1.8pt,fill=white] (0,0) circle (4cm);
				\draw[thick,fill=gray!50] (0,0) circle (0.8cm);
				\path (0:4) coordinate (b1)
				(70:4) coordinate (b2)
				(110:4) coordinate (b3)
				(135:4) coordinate (b4)
				(200:4) coordinate (b5)
				(90:4) coordinate (b6)
				(-90:4) coordinate (b7)
				(-70:4) coordinate (b8)
				(-50:4) coordinate (b9)
				(45:4) coordinate (b10);
				
\draw[red,line width=1.3pt]plot [smooth,tension=0.6] coordinates {(0,-4) (-1.7,0) (-0.8,1.2) (0,0.8)};
\draw[red,line width=1.3pt]plot [smooth,tension=0.6] coordinates {(0,-4) (1.7,0) (.8,1.2) (0,0.8)};	
\draw [red, line width=1.5pt] (b6) to (0,0.8);
\draw[cyan,line width=1pt,red] (b6) to[out=-120,in=-10](b4);
\draw[cyan,line width=1pt,red] (b6) to[out=-60,in=-170](b10);
\draw[cyan,line width=1pt,red] (b7) to[out=50,in=170](b9);
\draw[red] (-1.5,2.5) node {$1$};
\draw[red] (.2,2.5) node {$2$};
\draw[red] (1.5,2.5) node {$3$};
\draw[red] (1.5,-1.8) node {$4$};
\draw[red] (-1.5,-1.8) node {$5$};
\draw[red] (1.5,-2.7) node {$6$};
\draw (6,0) node {};

\draw[thick,fill=white] (b1) circle (0.15cm)
				(b2) circle (.15cm)
				(b3) circle (.15cm)
				(b5) circle (.15cm)
				(b8) circle (.15cm);

\draw[thick,red, fill=red] (b4) circle (0.15cm)
				(b6) circle (.15cm)
				(b7) circle (.15cm)
				(b9) circle (.15cm)
				(b10) circle (.15cm);
				
\draw[thick, red ,fill=red] (0,0.8) circle (0.15cm);
\draw[thick, black ,fill=white] (0,-0.8) circle (0.15cm);	
\end{tikzpicture}
{\begin{tikzpicture}[ar/.style={->,very thick,>=stealth}]
				\draw(-5,3)node(6){$6$}
				(-4,1.5)node(4){$4$}
				(-2,1.5)node(5){$5$}
				(-3,0)node(2){$2$}
				(-4,-1.5)node(3){$3$}
				(-2,-1.5)node(1){$1$};
			    \draw[ar](6) to  (4) ;
				\draw[ar](4) to (5);
				\draw[ar](4) to  (2);
				\draw[ar](2)to  (5);
				\draw[ar](2) to  (3);
				\draw[ar](1) to  (2);
				\draw[ar](2)to (3);
	
   \draw[bend right,dotted,thick]($(6)!.6!(4)-(0,0.1)$) to ($(4)!.25!(2)-(0,0.1)$);
				\draw[bend right,dotted,thick]($(4)!.55!(2)-(0,0.1)$) to ($(2)!.3!(3)-(0,0.05)$);
				\draw[bend left,dotted,thick]($(5)!.55!(2)-(0,0.1)$) to ($(2)!.3!(1)-(0,0.05)$);
				
		\end{tikzpicture}}
		\end{center}	
  \caption{The left picture is a coordinated-marked surface $(\cals,\calm,\zD^*)$, where the arcs in $\zD^*$ are the $\rpoint$-arcs. The right picture shows the associated gentle algebra, where the dotted lines represent the (quadratic) relations in the algebra.} 
	\label{fig:alg-ex}
	\end{figure}
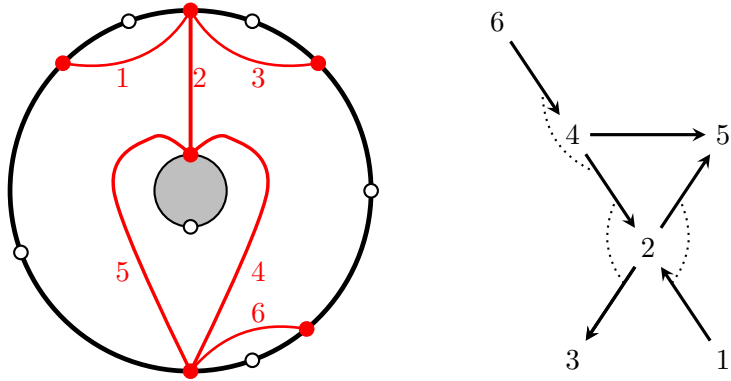
\end{example}

\subsection{The modules over gentle algebras}\label{subsection: gentle algebras}

In this subsection, we recall some basic definitions and constructions of the modules over gentle algebras.

It is well-known that any indecomposable module in $\ma$ is either a \emph{string module} or a \emph{band module}, which is parameterized by string and band combinatorics respectively \cite{BR87}.
The maps between the indecomposable modules are characterized in \cite{C89}.
Since the band module has self-intersection and plays no role in our consideration for rigid modules, we only recall the construction of string modules in the following. 
We refer the reader to \cite{BR87} for more details. 

For an arrow $\aaa$, let $\overline{\aaa}$ be its \emph{formal inverse} with $s(\overline{\aaa})=t(\aaa)$ and $t(\overline{\aaa})=s(\aaa)$. A \emph{walk} is a sequence $\sigma=\sigma_1\sigma_2\cdots\sigma_m$ of arrows and inverse arrows in $Q$ such that $t(\sigma_i)=s(\sigma_{i+1})$ and $\sigma_{i+1}\neq\overline{\sigma}_i$ for each $i$. Denote by $\overline{\sigma}=\overline{\sigma}_m\cdots\overline{\sigma}_2\overline{\sigma}_1$ the \emph{inverse} of $\sigma$.
A \emph{string} is a walk $\sigma$ that avoids relations, that is, there is no subsequence of $\sigma$ or $\overline{\sigma}$ belonging to $I$.
A \emph{direct string} is a string consisting of arrows and an \emph{inverse string} is a string consisting of formal inverses.
For each vertex $v$, we associate \emph{trivial string} $1_v$ with it.

Each string $\sigma$ defines a \emph{string module} $M_\sigma$, which is given by the representation of the quiver of type $A$ obtained by replacing every vertex in $\sigma$ with a copy of $k$ and every arrow by the identity map.
This gives a bijection between the inversion equivalent classes of strings and the isomorphism classes of string modules. In particular, $M_{1_v}$ is the simple module that arises from the vertex $v$ of $Q$.

\subsection{Zigzag arcs and string modules}\label{subsection: sur-module}
Now we recall the geometric model of the module category of a gentle algebra, see \cite{BC21,C25} for details.
Let's start with constructing a string module from an $\bpoint$-arc. 
In the subsection, let $(\cals,\calm,\zD^*)$ be a coordinated-marked surface, and let $A$ be the associated gentle algebra.

Note that $\zD^*$ cut the surface into polygons $\bbp$ each of which has exactly one marked point from $\calm_{\bpoint}$ or from $\calp_{\bpoint}$. These polygons will be called the \emph{polygons of $\zD^*$}.
We denote a polygon $\bbp$ of $\zD^*$ by $(\ell^*_{i_1},\cdots,\ell^*_{i_m})$, the ordered set of arcs in $\zD^*$ which form $\bbp$, where the arcs are ordered clockwise and where the index $1\leqslant j \leqslant m$ is considered modulo $m$ if $\bbp$ contains a puncture. For any $\ell^*_{i_j}\in \bbp$, we call $\ell^*_{i_{j-1}}$ (if exists) the \emph{predecessor} of $\ell^*_{i_{j}}$ in $\bbp$ and call $\ell^*_{i_{j+1}}$ (if exists) the \emph{successor} of $\ell^*_{i_{j}}$ in $\bbp$, see Figure \ref{figure:def-zigzag}.
In particular, $\ell^*_{i_{1}}$ has no predecessor and $\ell^*_{i_{m}}$ has no successor if $\bbp$ has a marked point from $\calm_{\bpoint}$, while $\ell^*_{i_{1}}$ has the predecessor $\ell^*_{i_{m}}$ and $\ell^*_{i_{m}}$ has the successor $\ell^*_{i_{1}}$ if $\bbp$ has a puncture.

{\bf Setting}: {\it Let $\za$ be an $\bpoint$-arc on $(\cals,\calm)$. After choosing a direction of $\za$, denote by $\bbp_0, \bbp_1,\cdots, \bbp_{n+1}$ the ordered polygons of $\zD^*$ that successively intersect with $\za$. Denote by $\ell^*_0, \ell^*_1,\cdots, \ell^*_{n}$ the ordered arcs in $\zD^*$ that successively intersect with $\za$ such that $\ell^*_i$ belongs to $\bbp_{i}$ and $\bbp_{i+1}$ for each $0 \leqslant i \leqslant n$.}

\begin{definition}\label{definition:zigzag arcs}
Let $\za$ be an $\bpoint$-arc on $(\cals,\calm,\zD^*)$. Under the notations in {\bf Setting}, we call $\za$ a \emph{zigzag arc} (with respect to $\zD^*$) if in each polygon $\bbp_{i+1}, 0 \leqslant i \leqslant n+1$, $\ell^*_{i+1}$ is the predecessor or the successor of $\ell^*_{i}$. Furthermore, if $\bbp_{i+1}$ is a polygon that contains a puncture and has $m$ edges with $m\neq 2$, then we also need the puncture is not in the (unique) triangle formed by the segments of $\ell^*_{i}$, $\ell^*_{i+1}$ and $\za$, see the right picture of Figure \ref{figure:def-zigzag}.
	\begin{figure}
			\begin{tikzpicture}[>=stealth,scale=0.8]
				\draw[red,very thick] (0,0)--(-1,2)--(2,3)--(5,2)--(4,0);
\draw [ gray!60, line width=4pt] (5,-.07)--(-1,-.07);	
\draw[line width=1.5pt](5,0)--(-1,0);\draw[red,thick,fill=red] (0,0) circle (0.1);
				\draw[red,thick,fill=red] (-1,2) circle (0.07);
				\draw[red,thick,fill=red] (2,3) circle (0.07);
				\draw[red,thick,fill=red] (5,2) circle (0.07);
				\draw[red,thick,fill=red] (4,0) circle (0.07);
				
				\draw[thick,fill=white] (2,0) circle (0.07);
				\node[red] at (.8,.5) {$\bbp_{i+1}$};
				\node at (.8,1.5) {\tiny$\za$};
				\node[red] at (3.9,2.7) {\tiny$\ell^*_{i_{j+1}}$};
				\node[red] at (0,2.7) {\tiny$\ell^*_{{i_j}}$};
				\node[red] at (-.9,.7) {\tiny$\ell^*_{{i_1}}$};	
    			\node[red] at (4.9,.7) {\tiny$\ell^*_{{i_m}}$};	
				\draw[bend right,thick](-1,2.5)to(5,2.5);
				\draw[bend right,thick,->](1.4,2.8)to(2.6,2.8);
				\node at (2,2.3) {\tiny$\aaa_{i+1}$};
				\node at (7,1.5) {};
			\end{tikzpicture}	
			\begin{tikzpicture}[>=stealth,scale=0.8]
				\draw[red,very thick] (0,0)--(-1,2)--(2,3)--(5,2)--(4,0);
				\draw[red,very thick](4,0)--(0,0);
				\draw[red,thick,fill=red] (0,0) circle (0.07);
				\draw[red,thick,fill=red] (-1,2) circle (0.07);
				\draw[red,thick,fill=red] (2,3) circle (0.07);
				\draw[red,thick,fill=red] (5,2) circle (0.07);
				\draw[red,thick,fill=red] (4,0) circle (0.07);
				
				\draw[thick,fill=white] (2,1) circle (0.07);
				\node[red] at (.8,.5) {$\bbp_{i+1}$};
				\node at (.8,1.5) {\tiny$\za$};
			\node[red] at (3.9,2.7) {\tiny$\ell^*_{i_{j+1}}$};
				\node[red] at (0,2.7) {\tiny$\ell^*_{{i_j}}$};
				\node[red] at (-.9,.7) {\tiny$\ell^*_{{i_1}}$};	
    			\node[red] at (5,.7) {\tiny$\ell^*_{{i_{m-1}}}$};	
    			\node[red] at (2,.3) {\tiny$\ell^*_{{i_m}}$};	
				\draw[bend right,thick](-1,2.5)to(5,2.5);
				\draw[bend right,thick,->](1.4,2.8)to(2.6,2.8);
				\node at (2,2.3) {\tiny$\aaa_{i+1}$};
				
		\end{tikzpicture}
		\begin{center}
			\caption{Two types of polygons formed by arcs in a simple coordinate $\zD^*$ and boundary segments, where a zigzag arc $\za$ passes through the polygon along an oriented intersection $\aaa_{i+1}$ of arcs in $\zD^*$. For the right polygon with a puncture, if $m\neq 2$, we need the puncture is not in the triangle formed by the segments of $\ell^*_{i_j}$, $\ell^*_{i_{j+1}}$ and $\za$.}\label{figure:def-zigzag}
		\end{center}
	\end{figure}
\end{definition}

We mention that the paper \cite{C25} modifies the surface model given in \cite{BC21}, and under this modification, 
a zigzag arc is a permissible arc in \cite{BC21}. 

{\bf Construction. String of a zigzag arc.}

Let $\za$ be a zigzag arc on $(\cals,\calm,\zD^*)$ with notation in {\bf Setting}, we associate a string $\sigma_\za$ of $A$ with it in the following way.
For each $0\leqslant i \leqslant n-1$, there is an arrow in $A$ from $\ell_i^*$ to $\ell_{i+1}^*$ arising from an oriented intersection of $\bbp_{i+1}$, which is a minimal oriented intersection, and we denote it by $\aaa_{i+1}$, see the pictures in Figure \ref{figure:def-zigzag}.
We associate a walk $\sigma_\za=\sigma_1\cdots\sigma_n$ with $\za$, where $\sigma_{i+1}=\aaa_{i+1}$ if $\za$ enter $\bbp_{i+1}$ through $\ell_i^*$ and leave through $\ell_{i+1}^*$, or $\sigma_{i+1}=\aaa^{-1}_{i+1}$ if $\za$ enter $\bbp_{i+1}$ through $\ell_{i+1}^*$ and leave through $\ell_i^*$. Since $\za$ is zigzag, it is straightforward to see that $\sigma_\za$ is a string of $A$.

\begin{definition}\label{prop-def:string and band from curves}
For a zigzag arc $\za$ on $(\cals,\calm,\zD^*)$, we call the (string) module $M_\za$ in $\ma$ arising from $\sigma_\za$ the \emph{string module of $\za$}.
\end{definition}

We have the following bijections, which can be found in \cite[Theorem 2]{BC21} and \cite[Theorem 2.8, 2.12]{C25}.
\begin{proposition}\label{theorem:main arcs and objects}
The map $M: \za\mapsto M_\za$ gives a bijection between zigzag arcs on $(\cals,\calm,\zD^*)$ and indecomposable string modules over $A$. 
In particular, 
\begin{enumerate}[\rm(1)]
\item each simple module is of the form $M_{\ell}$, $\ell\in 
\zD$;
\item each indecomposable injective module is of the form $M_{t^{-1}(\ell^*)}$, $t^{-1}(\ell^*)\in 
\zD_I$;
\item each indecomposable projective module is of the form $M_{t(\ell^*)}$, $t(\ell^*)\in 
\zD_P$.
\end{enumerate}
\end{proposition}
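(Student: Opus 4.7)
The plan is to build an explicit inverse $\sigma\mapsto \za_\sigma$ to the assignment $\za\mapsto \sigma_\za$, at the level of indecomposable strings, and then to recognize the three distinguished families of arcs as those corresponding to trivial strings, to longest-possible-strings on one side, and to longest-possible-strings on the other side. Since any indecomposable module over a gentle algebra is either a string or a band module \cite{BR87}, and zigzag arcs are by definition simple (hence cannot trace out the cyclic data of a band), it suffices to work with string modules throughout.

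First I would verify the well-definedness of $M:\za\mapsto M_\za$, namely that $\sigma_\za$ really is a string of $A(\zD^*)$. Each $\aaa_{i+1}$ is an arrow of $Q(\zD^*)$ since by construction it is the minimal oriented intersection in $\bbp_{i+1}$ between $\ell_i^*$ and $\ell_{i+1}^*$. Consecutive entries $\sigma_{i+1},\sigma_{i+2}$ share the vertex $\ell_{i+1}^*$, and the only nontrivial point is that their composition is not a relation of $I(\zD^*)$: by Definition \ref{definition:gentle algebra from dissection}(3), a relation through $\ell_{i+1}^*$ requires its two underlying oriented intersections to be based at the two \emph{different} endpoints of $\ell_{i+1}^*$. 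The zigzag condition forces $\ell_{i+2}^*$ to be the predecessor or successor of $\ell_{i+1}^*$ in $\bbp_{i+2}$, which places $\aaa_{i+1}$ and $\aaa_{i+2}$ at the \emph{same} endpoint of $\ell_{i+1}^*$, so no relation appears; the supplementary puncture clause in Definition \ref{definition:zigzag arcs} rules out the degenerate case in which the curve could wind around a puncture and produce a band rather than a string.

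For surjectivity, given a string $\sigma=\sigma_1\cdots\sigma_n$ visiting vertices $\ell_0^*,\ldots,\ell_n^*$, I would reconstruct $\za_\sigma$ by concatenating in each polygon $\bbp_{i+1}$ a segment that enters along $\ell_i^*$ and leaves along $\ell_{i+1}^*$; the absence of subwords in $I(\zD^*)$ guarantees that these local segments match along the predecessor/successor pattern, producing a zigzag arc whose associated string is $\sigma$ up to inversion. Injectivity is then immediate since both sides are determined by the sequence of crossings with $\zD^*$. For (1), the trivial string $1_v$ corresponds to an arc crossing only $\ell_v^*$ with its two $\bpoint$-endpoints in the adjacent polygons, which is exactly the unique arc of $\zD$ dual to $\ell_v^*$. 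For (2) and (3), I would characterize $I_v$ and $P_v$ on the string side by maximality, meaning that no further arrow can be appended at either endpoint of the string in a compatible head/tail direction. Extending a zigzag arc by one further crossing corresponds geometrically to rotating one of its endpoints around a $\bpoint$-point of $\zD^*$ through an adjacent polygon; the procedure terminates exactly when that endpoint lands on the next $\bpoint$-point along the boundary component, anticlockwise for injectives and clockwise for projectives, which are by construction the arcs $t^{-1}(\ell_v^*)\in\zD_I$ and $t(\ell_v^*)\in\zD_P$.

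The main technical obstacle is setting up the local dictionary at each crossing $\za\cap\ell_{i+1}^*$ between the geometric predecessor/successor condition across the two adjacent polygons $\bbp_{i+1},\bbp_{i+2}$ and the algebraic no-relation condition for $\aaa_{i+1}\aaa_{i+2}\in I(\zD^*)$. This is delicate in the presence of punctures, because the cyclic ordering of edges in a punctured polygon can make the predecessor and successor of a single arc coincide, and one must check that the puncture clause of Definition \ref{definition:zigzag arcs} fixes precisely the resulting string-versus-band ambiguity. Once this dictionary is clean, the verification of (1)–(3) reduces to tracking the endpoints of zigzag arcs and unfolding the definitions of $\zD$, $\zD_I$, and $\zD_P$.
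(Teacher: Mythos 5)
The paper does not prove Proposition \ref{theorem:main arcs and objects}; it cites \cite{BC21} (Theorem~2) and \cite{C23} (Theorems 2.8, 2.12), so there is no internal proof to compare against. That said, your outline (build an explicit inverse at the level of strings, then identify the distinguished families) is the natural and correct route, and is essentially what those references do. There are, however, two substantive issues in your well-definedness argument.

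First, you assert that the zigzag condition ``places $\aaa_{i+1}$ and $\aaa_{i+2}$ at the same endpoint of $\ell_{i+1}^*$, so no relation appears.'' This is false as stated. The zigzag condition only requires $\ell_{i+2}^*$ to be adjacent to $\ell_{i+1}^*$ in $\bbp_{i+2}$; it does not determine which of the two $\rpoint$-endpoints of $\ell_{i+1}^*$ the new corner sits at, and indeed a genuine zigzag alternates endpoints. The correct argument is a two-case one. If the corners in $\bbp_{i+1}$ and $\bbp_{i+2}$ are based at different endpoints of $\ell_{i+1}^*$, then the surface orientation forces $\sigma_{i+1}$ and $\sigma_{i+2}$ to have opposite type (one an arrow, one a formal inverse), so no subword of $\sigma_\za$ is even a path of length two through $\ell_{i+1}^*$, and the relation condition is vacuous. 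If the corners are based at the same endpoint, then $\sigma_{i+1}\sigma_{i+2}$ is a genuine path $\aaa_{i+1}\aaa_{i+2}$, and it is exactly Definition~\ref{definition:gentle algebra from dissection}(3) (same common endpoint $\Rightarrow$ not in $I(\zD^*)$) that saves the day. Your argument conflates these two cases and gives a reason that only applies to the second.

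Second, the remark ``zigzag arcs are by definition simple (hence cannot trace out the cyclic data of a band)'' is incorrect: Definition~\ref{definition:zigzag arcs} imposes no simplicity, and zigzag arcs with self-intersections do exist (and correspond to non-rigid string modules). Bands are excluded not because zigzag arcs are simple, but because bands correspond to closed curves without endpoints, whereas a zigzag arc is by definition an $\bpoint$-arc with endpoints in $\calm_\bpoint\cup\calp_\bpoint$; in any case, the proposition's statement is already restricted to string modules, so no exclusion argument is needed at all. The identification in (1)--(3) is fine in spirit but should be stated in terms of moving the $\bpoint$-endpoint along the boundary to the next $\bpoint$-point (the twist/anti-twist of $\ell^*$), not ``rotating around a $\bpoint$-point of $\zD^*$'', since $\zD^*$ consists of $\rpoint$-arcs.
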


\subsection{Intersections as morphisms and extensions of modules}\label{subsection: sur-ext}
Let's interpret the intersections of zigzag arcs as the morphisms and extensions between the associated modules, see \cite{C25}. 
For convenience, we view any $\bpoint$-point as a zero zigzag arc, and then the module associated with it is just the zero module.

\begin{definition}\label{definition: weight}
Let $\za$ and $\zb$ be two zigzag arcs with an intersection point $p$ in a polygon $\bbp$ of $\zD^*$. 

An \emph{oriented intersection} $\mathfrak{p}$ from $\za$ to $\zb$ is a clockwise angle locally from $\za$ to $\zb$ based on $p$ such that the angle is in the interior of the surface with the convention that if $q$ is an interior point of the surface then the opposite angles are considered equivalent. 

The \emph{weight} $w(\mathfrak{p})$ of $\mathfrak{p}$ is defined as the number of minimal oriented intersections in $\bbp$ which in between the fan from $\za$ to $\zb$ along $\mathfrak{p}$, see the four cases in Figure \ref{figure:weight}. 

	\begin{figure}
		\begin{center}
			\begin{tikzpicture}[>=stealth,scale=.8]
				\draw[red,very thick] (1,0)--(3,1)--(3,3)--(1,4);
				\draw[red,very thick,dashed] (-1,4)--(1,4);
				\draw[red,very thick] (-1,0)--(-3,1)--(-3,3)--(-1,4);
				
				\draw[line width=1pt] (-2.5,4)--(0,0)--(2.5,4);
				\draw [ gray!60, line width=3pt] (-2,-.03)--(2,-.03);		
					\draw[line width=1.5pt] (-2,0)--(2,0);			\draw[thick,fill=white] (0,0) circle (0.07);
				\node[red] at (-1.5,3.5) {\tiny$\ell^*_{t}$};
				\node[red] at (1.5,3.5) {\tiny$\ell^*_{t+\omega}$};
				\node at (-1.7,2) {$\za$};
				\node at (1.7,2) {$\zb$};
				\node[red] at (-2.3,1) {\tiny$\ell^*_{1}$};
				\node[red] at (2.3,1) {\tiny$\ell^*_{n}$};
				\node[red] at (0,2.5) {$\bbp$};
				
				\draw[thick,bend left,->](-.2,.3)to(.2,.3);
				\node [] at (0,.6) {$\mathfrak{p}$};	
				\draw[red,thick,fill=red] (1,0) circle (0.07);
				\draw[red,thick,fill=red] (-1,0) circle (0.07);
				\draw[red,thick,fill=red] (3,1) circle (0.07);
				\draw[red,thick,fill=red] (3,3) circle (0.07);
				\draw[red,thick,fill=red] (1,4) circle (0.07);
				\draw[red,thick,fill=red] (1,0) circle (0.07);
				\draw[red,thick,fill=red] (-3,1) circle (0.07);
				\draw[red,thick,fill=red] (-3,3) circle (0.07);
				\draw[red,thick,fill=red] (-1,4) circle (0.07);
			\draw[] (6,-1);
			\end{tikzpicture}
		\begin{tikzpicture}[>=stealth,scale=.8]
				\draw[red,very thick] (-1,0)--(1,0)--(3,1)--(3,3)--(1,4);
				\draw[red,very thick,dashed] (-1,4)--(1,4);
				\draw[red,very thick] (-1,0)--(-3,1)--(-3,3)--(-1,4);
				
				\draw[line width=1pt] (-2.5,4)--(0,1.6)--(2.5,4);	
                \draw[thick,fill=white] (0,1.6) circle (0.07);
				\node[red] at (-1.5,3.5) {\tiny$\ell^*_{t}$};
				\node[red] at (1.5,3.5) {\tiny$\ell^*_{t+\omega}$};
				\node at (-1.5,2.4) {$\za$};
				\node at (1.5,2.4) {$\zb$};
				\node[red] at (-2.3,1) {\tiny$\ell^*_{1}$};
				\node[red] at (2.3,1) {\tiny$\ell^*_{n-1}$};
				\node[red] at (0,.3) {\tiny$\ell^*_{n}$};
				\node[red] at (0,3) {$\bbp$};
				
                \draw[very thick] (0,1.6) circle (0.4);				
				\draw[thick,->](.2,1.95)to(.3,1.88);
				\node [] at (0,2.3) {$\mathfrak{p}_1$};	
				\draw[thick,->](-.35,1.8)to(-.3,1.88);
				\node [] at (0.1,.9) {$\mathfrak{p}_2$};	
				\draw[red,thick,fill=red] (1,0) circle (0.07);
				\draw[red,thick,fill=red] (-1,0) circle (0.07);
				\draw[red,thick,fill=red] (3,1) circle (0.07);
				\draw[red,thick,fill=red] (3,3) circle (0.07);
				\draw[red,thick,fill=red] (1,4) circle (0.07);
				\draw[red,thick,fill=red] (1,0) circle (0.07);
				\draw[red,thick,fill=red] (-3,1) circle (0.07);
				\draw[red,thick,fill=red] (-3,3) circle (0.07);
				\draw[red,thick,fill=red] (-1,4) circle (0.07);
			\draw[] (-1,-1);
			\end{tikzpicture}
\begin{tikzpicture}[>=stealth,scale=.8]
				\draw[red,very thick,dashed] (1,0)--(3,1);
                \draw[red,very thick] (3,1)--(2,4);
				\draw[red,very thick,dashed] (-1,0)--(-3,1);
                \draw[red,very thick] (-3,1)--(-2,4);
				\draw[red,very thick] (-2,4)--(2,4);

				\draw[line width=1pt] (-1.7,4.7)--(3.7,.9);
				\draw[line width=1pt] (1.7,4.7)--(-3.7,.9);
				\draw [ gray!60, line width=3pt] (-2,-.04)--(2,-.04);		
					\draw[line width=1.5pt] (-2,0)--(2,0);			\draw[thick,fill=white] (0,0) circle (0.07);
				\node at (-1.5,2.8) {$\za$};
				\node at (1.5,2.8) {$\zb$};
				\node [red]at (0,1) {$\bbp$};
				
				\draw[thick,bend left,->](-.25,3.7)to(.25,3.7);
				\node [] at (0,4.2) {$\mathfrak{p}_1$};	
				
				\draw[thick,bend left,->](.25,3.3)to(-.25,3.3);
				\node [] at (0,2.8) {$\mathfrak{p}_1$};	
                
				\draw[thick,bend left,->](.25,3.7)to(.25,3.3);
				\node [] at (.8,3.5) {$\mathfrak{p}_2$};	
                
				\draw[thick,bend left,->](-.25,3.3)to(-.25,3.7);
				\node [] at (-.8,3.5) {$\mathfrak{p}_2$};	                \draw[red,thick,fill=red] (1,0) circle (0.07);
				\draw[red,thick,fill=red] (-1,0) circle (0.07);
				\draw[red,thick,fill=red] (3,1) circle (0.07);
				\draw[red,thick,fill=red] (2,4) circle (0.07);
				\draw[red,thick,fill=red] (1,0) circle (0.07);
				\draw[red,thick,fill=red] (-3,1) circle (0.07);
				\draw[red,thick,fill=red] (-2,4) circle (0.07);
                \draw[] (6,0);
\end{tikzpicture}
\begin{tikzpicture}[>=stealth,scale=.8]
				\draw[red,very thick,dashed] (1,0)--(3,1);
                \draw[red,very thick] (3,1)--(2,4);
				\draw[red,very thick,dashed] (-1,0)--(-3,1);
                \draw[red,very thick] (-3,1)--(-2,4);
				\draw[red,very thick] (-2,4)--(2,4);
                \draw[red,very thick] (-1,0)--(1,0);
				
				\draw[line width=1pt] (-1.7,4.7)--(3.7,.9);
				\draw[line width=1pt] (1.7,4.7)--(-3.7,.9);
	            \draw[thick,fill=white] (0,2) circle (0.07);
				\node at (-1.5,2.8) {$\za$};
				\node at (1.5,2.8) {$\zb$};
				\node[red] at (0,1) {$\bbp$};
				
				\draw[thick,bend left,->](-.25,3.7)to(.25,3.7);
				\node [] at (0,4.2) {$\mathfrak{p}_1$};	
				
				\draw[thick,bend left,->](.25,3.3)to(-.25,3.3);
				\node [] at (0,2.8) {$\mathfrak{p}_1$};	
                
				\draw[thick,bend left,->](.25,3.7)to(.25,3.3);
				\node [] at (.8,3.5) {$\mathfrak{p}_2$};	
                
				\draw[thick,bend left,->](-.25,3.3)to(-.25,3.7);
				\node [] at (-.8,3.5) {$\mathfrak{p}_2$};	                \draw[red,thick,fill=red] (1,0) circle (0.07);
				\draw[red,thick,fill=red] (-1,0) circle (0.07);
				\draw[red,thick,fill=red] (3,1) circle (0.07);
				\draw[red,thick,fill=red] (2,4) circle (0.07);
				\draw[red,thick,fill=red] (1,0) circle (0.07);
				\draw[red,thick,fill=red] (-3,1) circle (0.07);
				\draw[red,thick,fill=red] (-2,4) circle (0.07);
			\end{tikzpicture}
		\end{center}
		\begin{center}
			\caption{An intersection $p$ between two zigzag arcs $\za$ and $\zb$ gives rise to oriented intersections. When $p$ is a boundary point, it gives rise to a unique oriented intersection $\mathfrak{p}$ with weight $\omega$, see the left-above picture. When $p$ is a puncture, there are infinitely many oriented intersections from $\za$ to $\zb$ with weights $mn+\omega$, and infinitely many oriented intersections from $\zb$ to $\za$ with weights $mn-\omega$. In particular, $\omega(\mathfrak{p}_1)=\omega$ and $\omega(\mathfrak{p}_2)=n-\omega$ for $\mathfrak{p}_1$ and $\mathfrak{p}_2$ in the right-above picture. When $p$ is an interior non-puncture point on the surface, then it gives rise to an oriented intersection $\mathfrak{p}_1$ from $\za$ to $\zb$ with weight $\omega(\mathfrak{p}_1)=0$, and an oriented intersection $\mathfrak{p}_2$ from $\zb$ to $\za$ with weight $\omega(\mathfrak{p}_2)=1$, see the bottom two pictures.}\label{figure:weight}
		\end{center}
	\end{figure}
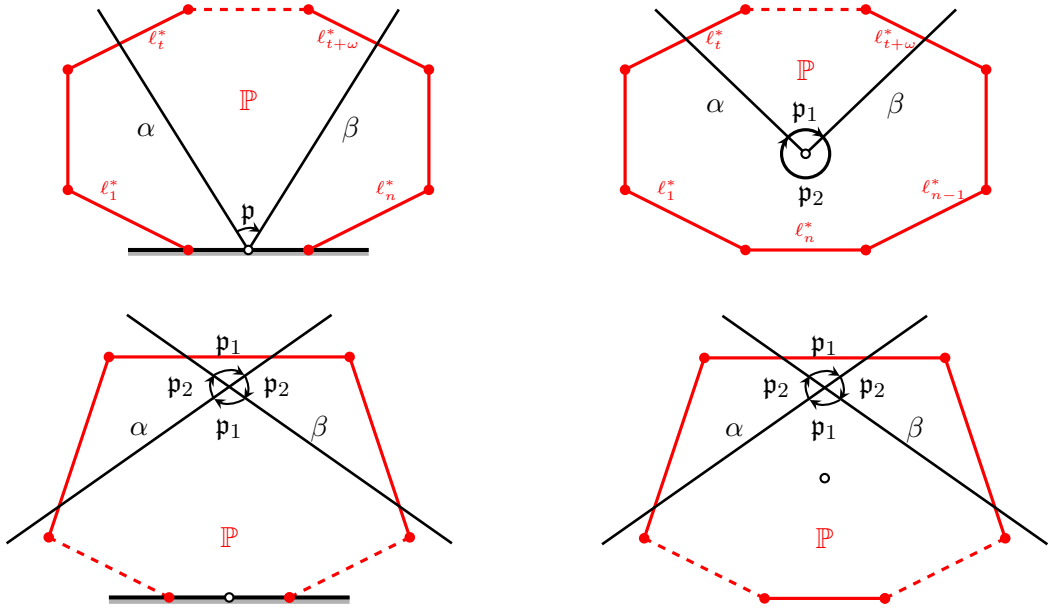
\end{definition}

\begin{remark}
The definition of an oriented intersection between $\bpoint$-arcs based on a common endpoint $p$ is similar to the definition of an oriented intersection between $\rpoint$-arcs $\ell^*$ in $\zD^*$ given in Definition \ref{definition:oriented intersection}. The difference is that here we use a clockwise orientation, rather than an anticlockwise orientation. We also mention that for the definition to be well-defined in the case of a loop, we treat the unique endpoint of a loop as two distinct endpoints.

There are three cases of an oriented intersection that arises from an intersection point $p$, depending on the position of $p$.

(1) If $p$ is a common endpoint of $\za$ and $\zb$ from $\calm_\bpoint$, then $p$ gives rises to a unique oriented intersection $\mathfrak{p}$, see the left-above picture in Figure \ref{figure:weight}.

(2) If $p$ is a common endpoint of $\za$ and $\zb$ from $\calp_\bpoint$, then there are infinitely many oriented intersections arising from $p$, see the right-above picture in Figure \ref{figure:weight}. 

(3) If $p$ is an interior point, then there are two oriented intersections arising from $p$, and the weights of them are, respectively, zero and one; see the bottom two pictures in Figure \ref{figure:weight}. The pictures may degenerate, that is, $\za$ or $\zb$ may start at the marked $\bpoint$-point in the polygon $\bbp$.
\end{remark}

The following theorem is given in \cite[Theorem 2.30]{C25}.

\begin{proposition}\label{prop:main-extensions}
Let $(\cals,\calm,\zD^*)$ be a coordinated-marked surface, and let $\za$ and $\zb$ be two zigzag arcs on it. Then for any \emph{oriented intersection} from $\za$ to $\zb$ with weight $\omega$, there is a morphism in $\Ext^\omega(M_\za,M_\zb)$ associated with it. Furthermore, all of such morphisms form a basis of the space $\Ext^\omega(M_\za,M_\zb)$, unless $\za$ and $\zb$ are the same $\bpoint$-arc. In this case, the identity map is the extra basis map of $\Hom(M_\za,M_\zb)$.
\end{proposition}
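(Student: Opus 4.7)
The plan is to combine the classical string and graph-map combinatorics for morphisms between string modules with the geometric encoding of the minimal projective resolution of $M_\za$ that is carried by the zigzag arc $\za$ in the refined model of \cite{C23}. Once the $\omega=0$ case is settled, $\Ext^\omega$ is accessed by interpreting a weight-$\omega$ oriented intersection as a weight-$0$ intersection after ``unwinding'' $\za$ through $\omega$ consecutive polygons of $\zD^*$, which is the geometric counterpart of passing to the $\omega$-th term in the minimal projective resolution.

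First I would handle the Hom case ($\omega=0$). By the Crawley-Boevey classification, $\Hom(M_\za,M_\zb)$ has a basis indexed by graph maps: pairs of compatible substrings of $\sigma_\za$ and $\sigma_\zb$ sharing a common string $c$ together with hook/cohook conditions at the two ends of $c$. Translating the construction in Section \ref{subsection: sur-module} from strings back to arcs, such a shared substring $c$ records a common sequence of polygons of $\zD^*$ through which $\za$ and $\zb$ travel along the same oriented intersections of $\zD^*$; the hook/cohook condition at each end is exactly the condition that $\za$ and $\zb$ meet in a weight-$0$ oriented intersection at a common endpoint in $\calm_\bpoint\cup\calp_\bpoint$ or at an interior transverse intersection, as in the configurations of Figure \ref{figure:weight}. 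The only graph map that is \emph{not} of this form is the identity on a single string, which accounts for the extra identity contribution when $\za=\zb$.

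Next, for $\omega\geq 1$, I would use that the model of \cite{C23} encodes the minimal projective resolution of $M_\za$ geometrically: each successive term $P^{-i}_\za$ is read from a prescribed ``shift'' of $\za$ through the polygons of $\zD^*$, and the differentials are represented by weight-$1$ oriented intersections between such shifted arcs. A weight-$\omega$ oriented intersection $\mathfrak{p}$ from $\za$ to $\zb$ then unwinds, after $\omega$ such shifts, to a weight-$0$ intersection from an indecomposable summand of $P^{-\omega}_\za$ to $M_\zb$. Applying the Hom case produces a chain map $f_\mathfrak{p}\colon P^{-\omega}_\za\to M_\zb$, and a direct check using the weight-$1$ description of the differential shows $f_\mathfrak{p}\circ d^{-\omega-1}=0$, so $f_\mathfrak{p}$ represents a class in $\Ext^\omega(M_\za,M_\zb)$.

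The main obstacle is verifying that the resulting classes form a \emph{basis}. I would compute $\dim_k \Ext^\omega(M_\za,M_\zb)$ directly from the Butler-Ringel / Huard-Liu projective resolution of $M_\za$ and match the count summand by summand with the oriented intersections of weight $\omega$ enumerated in Definition \ref{definition: weight}. Linear independence of the constructed classes reduces to linear independence of the associated weight-$0$ graph maps at the various shift levels, which follows from the $\omega=0$ case. The genuinely delicate point is the puncture case: an intersection at a puncture whose valency in $\zD^*$ is $n$ contributes infinitely many oriented intersections with weights $mn+\omega$ for $m\geq 0$, and this count must be matched against the eventually periodic tail (of period $n$) of the minimal projective resolution of an arc ending at the puncture.
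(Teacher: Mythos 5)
The paper does not actually prove this proposition; it cites it as Theorem 2.30 of \cite{C23} and only gives a one-paragraph sketch of the idea: embed the module-category surface model into the derived-category model of \cite{OPS18}, so that each zigzag arc carries both the module and its minimal projective resolution, and then read Hom/Ext from intersections of arcs in the derived picture (with the weight-sum identity of Lemma \ref{lem:sumwt} governing triangles). Your plan shares that central insight --- the arc encodes the minimal projective resolution --- but implements it inside $\ma$ rather than passing to the derived category. Your $\omega=0$ step is essentially \cite[Theorem~2]{BC21} and is sound if cited as such; your "unwinding through $\omega$ polygons" is precisely the geometric description of $\Omega^\omega$ stated in Lemma \ref{lem:geost} (which the paper attributes to \cite{HS05,BS21}, so there is no circularity with \cite{C23}). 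What the paper's sketch buys that yours does not is a uniform basis statement for all $\omega$ at once, including the infinite families at punctures, read off directly from the derived hom-spaces; your route must reconstruct that count by hand.

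That reconstruction is exactly where the proposal stops being a proof, and you are right to flag it, but the flag should be sharper. Three things are left unwritten. (i) Having applied $\Omega^\omega$, the arc splits into two shifted arcs ${}_{p_1}^{\bullet}\zg$, ${}_{p_2}^{\bullet}\zg$, and when $\omega=1$ there may be a projective summand $P$; you need to match weight-$\omega$ oriented intersections based at $p_i$ bijectively with weight-$0$ intersections of the corresponding shifted arc, and also argue that $P$ contributes nothing to $\Ext^1$ (this requires working modulo maps that extend to $P^{0}$, not just taking all of $\Hom(\Omega M_\za,M_\zb)$). (ii) Linear independence of the constructed classes reduces to the $\omega=0$ case as you say, but surjectivity onto $\Ext^\omega$ does not --- you also need an inverse construction turning a graph map $\Omega^\omega M_\za\to M_\zb$ into a weight-$\omega$ oriented intersection on the original surface, and "match the count summand by summand" only gestures at this. (iii) In the puncture case the shifted arcs ${}_{p}^{t}\zg$ never become trivial but spiral, so the mod-$n$ periodicity of the resolution must be matched against the weights $mn+\omega$ in Figure \ref{figure:weight}; this is the genuinely delicate step you correctly identify, but nothing in the plan says how the two periodicities are aligned. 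None of this is a wrong step, but each is a substantive piece of the argument that the proposal leaves open.
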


The idea of the proof of the above proposition is to embed the geometric model for the module category of a gentle algebra into the geometric model of the derived category, in the sense that each zigzag arc on the surface represents an indecomposable module as well as the minimal projective resolution of this module (concerning a so-called projective coordinate). Now, assume that we have a triangle formed by zigzag arcs on the surface. Then it gives a distinguished triangle in the derived category of the gentle algebra. In particular, the sum of the weights of the oriented intersection is one. We write this observation as a lemma which will be cited frequently in the following.

\begin{lemma}\label{lem:sumwt}
Assume that we have a triangle formed by zigzag arcs on $(\cals,\calm,\zD^*)$, and denote by $\mathfrak{p}_1, \mathfrak{p}_2, \mathfrak{p}_3$ the oriented intersections in it.
Then the sum of the weights of the oriented intersections is one:
\[\omega(\mathfrak{p}_1)+\omega(\mathfrak{p}_2)+\omega(\mathfrak{p}_3)=1.\]
\end{lemma}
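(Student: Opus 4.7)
The proof plan is to lift the triangle of zigzag arcs to a distinguished triangle in the derived category of $A$ and then read off the sum of weights from the degree shifts in a rotated triangle. The main tool is the embedding alluded to immediately before the lemma: each zigzag arc $\za$ represents not just the module $M_\za$ but a canonical complex in $\cald^b(A)$ (the minimal projective resolution with respect to a projective coordinate, as in \cite{C23}), and under this embedding an oriented intersection from $\za$ to $\zb$ of weight $\omega$ corresponds to a homogeneous morphism $M_\za \to M_\zb[\omega]$. Proposition \ref{prop:main-extensions} tells us these morphisms are precisely the basis elements of $\Ext^\omega(M_\za,M_\zb)$.

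Concretely, I would label the three zigzag arcs $\zg_1,\zg_2,\zg_3$ cyclically so that the oriented intersection $\mathfrak{p}_i$ goes from $\zg_i$ to $\zg_{i+1}$ (indices mod $3$), with associated morphism $f_i\colon M_{\zg_i}\to M_{\zg_{i+1}}[\omega(\mathfrak{p}_i)]$. The geometric triangle bounds a disk on $(\cals,\calm,\zD^*)$, and the standard "smoothing at a corner" argument for surface triangulations shows that the cone of $f_1$ in $\cald^b(A)$ is isomorphic to $M_{\zg_3}[\omega(\mathfrak{p}_1)]$, with the connecting morphism to $M_{\zg_1}[1]$ given (up to rotation and shift) by $f_2$ and $f_3$. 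In other words, the three morphisms $f_1,f_2,f_3$ assemble into a distinguished triangle
\[
M_{\zg_1}\xrightarrow{f_1} M_{\zg_2}[\omega(\mathfrak{p}_1)]\xrightarrow{f_2[\omega(\mathfrak{p}_1)]} M_{\zg_3}[\omega(\mathfrak{p}_1)+\omega(\mathfrak{p}_2)]\xrightarrow{f_3[\omega(\mathfrak{p}_1)+\omega(\mathfrak{p}_2)]} M_{\zg_1}[\omega(\mathfrak{p}_1)+\omega(\mathfrak{p}_2)+\omega(\mathfrak{p}_3)].
\]
Since the last shift must equal $M_{\zg_1}[1]$, we obtain $\omega(\mathfrak{p}_1)+\omega(\mathfrak{p}_2)+\omega(\mathfrak{p}_3)=1$.

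The only genuine work is verifying the "cone" step, i.e.\ that a geometric triangle really does produce a distinguished triangle. I would handle this by case analysis on the three vertices, according to whether each endpoint lies on the boundary (giving a single oriented intersection of weight $\omega$ determined by the number of minimal oriented intersections in the enclosing polygon of $\zD^*$), at a puncture (weights $\omega$ and $n-\omega$), or at an interior transversal crossing (weights $0$ and $1$). In each case the distinguished triangle is the image under the embedding of \cite{C23} of an obvious mapping cone of complexes; the verification reduces to the standard fact that in a surface with coordinates $\zD^*$, two arcs meeting in a polygon and a "resolution" at the meeting yield a mapping cone whose representing arc is the third side.

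The main obstacle is exactly this last verification, especially the puncture case, where several oriented intersections coexist and one must check the weights match the rotation of the distinguished triangle. Once that bookkeeping is settled, the conclusion is immediate from the observation that the total shift around a distinguished triangle is $+1$.
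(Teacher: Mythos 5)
Your argument follows the exact route the paper itself sketches in the paragraph preceding the lemma: embed the geometric triangle into the derived-category model of \cite{C23} so that it yields a distinguished triangle, interpret weights as degree shifts, and conclude from the fact that the total shift around a distinguished triangle is $+1$. One small prose slip: the cone of $f_1$ is $M_{\zg_3}[\omega(\mathfrak{p}_1)+\omega(\mathfrak{p}_2)]$, as your displayed triangle correctly records, not $M_{\zg_3}[\omega(\mathfrak{p}_1)]$.
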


\section{Maximal rigid modules as admissible $5$-partial triangulations}\label{section:max-rig-mod}
This section studies the rigid modules over a gentle algebra, that is, the modules with trivial once-self-extensions. We realize maximal rigid modules as admissible $5$-partial triangulations and describe their rank.

\subsection{Arc systems and $s$-partial triangulations}\label{section:max-rig-mod1}

\begin{definition}\label{def:par-tri}
A collection of simple zigzag arcs on a coordinated-marked surface $(\cals,\calm, \zD^*)$ is called an \emph{arc system} if the arcs have no interior intersections. Furthermore, it is called \emph{s-partial triangulation}, if the arcs cut the surface into polygons, each of which contains at most one $\rpoint$-point from $\calm_{\rpoint}$, and the number of the edges of each polygon is bounded by $s$.
\end{definition}

We call an edge in a partial triangulation an \emph{internal edge} if it is contained in two polygons, while we call it an \emph{external edge} if it is contained in one polygon. Then an internal edge is an $\bpoint$-arc, and an external edge is a boundary segment with one $\bpoint$-endpoint and one $\rpoint$-endpoint. 
We call a polygon an \emph{internal polygon}, if it is formed by internal edges, while we call it an \emph{external polygon}, if there exist external edges in it. For example, in Figure \ref{fig:ply}, \textbf{F1}, \textbf{F2} and \textbf{F3} are external polygons and \textbf{F4} and \textbf{F5} are internal polygons.

\begin{definition}\label{def:par-tri2}
An \emph{admissible arc system} on $(\cals,\calm,\zD^*)$ is an arc system such that the weight of each oriented intersection (concerning $\zD^*$) of two arcs is different from one.
An \emph{maximal admissible arc system} is an admissible arc system such that there is no other admissible arc system that strictly contains it. 
An \emph{admissible s-partial triangulation} on $(\cals,\calm,\zD^*)$ is an s-partial triangulation which is admissible.
\end{definition}

An admissible arc system will be denoted by $\calp$, and an admissible s-partial triangulation will be denoted by $\calp_s$. Then an admissible $3$-partial triangulation is an ordinary triangulation on $(\cals,\calm)$ consisting of zigzag arcs concerning $\zD^*$, such that the weight of each oriented intersection is different from one. There is a chain of the sets of admissible arc systems and admissible $s$-partial triangulations: 
$$\{\calp_3\}\subset \{\calp_4\}\cdots\subset \{\calp_s\}\cdots\subset \{\calp\}.$$

\begin{proposition}\label{prop:max-m1}
An admissible arc system $\calp=\{\zg_i, 1\leqslant i \leqslant t\}$ on $(\cals,\calm,\zD^*)$ gives rise to a rigid module $M=\bigoplus_{1\leqslant i \leqslant t}M_{\zg_i}$ in $\ma$. 
In particular, this gives a bijection between the set of (maximal resp.) admissible arc systems on $(\cals,\calm,\zD^*)$ and the set of (maximal resp.) rigid modules in $\ma$.  
\end{proposition}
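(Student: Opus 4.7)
The plan is to translate $\Ext^1(M,M)=0$ for $M=\bigoplus_i M_{\zg_i}$ directly into the defining combinatorics of an admissible arc system, by invoking Proposition \ref{prop:main-extensions}. First I would verify the forward direction: given an admissible arc system $\calp=\{\zg_i\}$, a basis of each $\Ext^1(M_{\zg_i},M_{\zg_j})$ is indexed by the oriented intersections from $\zg_i$ to $\zg_j$ of weight one. Reading off Definition \ref{definition: weight} and Figure \ref{figure:weight}, such a weight-one oriented intersection can arise only in three situations: (a) at an interior non-puncture crossing, where the ``inward'' oriented intersection has weight one; (b) at a common boundary endpoint in $\calm_{\bpoint}$, when the wedge contains exactly one minimal oriented intersection of $\zD^*$; or (c) at a puncture endpoint, when some weight $mn\pm\omega$ equals one. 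Situation (a) is ruled out because $\calp$ has no interior intersections, and situations (b) and (c) are ruled out because admissibility forces every oriented intersection at a common endpoint to have weight $\neq 1$. Hence $\Ext^1(M,M)=0$.

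For the converse, suppose $N$ is an indecomposable summand of a rigid module. I would first argue that $N$ cannot be a band module: a band corresponds to a closed walk, and either by applying Proposition \ref{prop:main-extensions} to the band curve with itself, or by the classical fact that band modules over gentle algebras lie in homogeneous tubes, one obtains a nonzero element of $\Ext^1(N,N)$, contradicting rigidity. Thus $N=M_{\zg}$ for a zigzag arc $\zg$ by Proposition \ref{theorem:main arcs and objects}, and $\zg$ must be simple, for otherwise an interior self-crossing would — via case (a) above — furnish a weight-one self-intersection incompatible with rigidity. Collecting summands, any rigid $M$ thus produces a family $\{\zg_i\}$ of simple zigzag arcs, and running the weight-one analysis in reverse for each pair $(i,j)$ shows that no two arcs cross in the interior and no oriented intersection has weight one, i.e., the family is an admissible arc system. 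These two assignments are mutually inverse by construction and visibly preserve strict inclusion, so maximal admissible arc systems correspond to maximal rigid modules.

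The main obstacle I anticipate is the puncture case of (c), where a single intersection point contributes infinitely many oriented intersections with weights $mn\pm\omega$; one must check that the single combinatorial requirement ``weight $\neq 1$'' in Definition \ref{def:par-tri2} genuinely captures $\Ext^1$-vanishing for every such oriented intersection simultaneously. A secondary subtlety is the exclusion of band modules from rigid modules, which I would treat either geometrically through Proposition \ref{prop:main-extensions} or as a folklore tube argument specific to gentle algebras.
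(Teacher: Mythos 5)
Your proposal is correct and follows essentially the same route as the paper: invoke Proposition \ref{prop:main-extensions} to identify $\Ext^1$ with weight-one oriented intersections, dismiss band modules via their non-trivial self-extension, and observe that the admissibility conditions (no interior crossings, no weight-one oriented intersection at a shared endpoint) are precisely the combinatorial negation of any weight-one contribution. The ``obstacle'' you flag at punctures is not actually a gap: Definition \ref{def:par-tri2} quantifies over \emph{all} oriented intersections, so the hypothesis that each one has weight $\neq 1$ already rules out every $mn\pm\omega=1$ at a puncture simultaneously, and the paper's proof treats this case the same way.
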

\begin{proof}
Since a band module has non-trivial once self-extension, we only consider the string module, that is, a module $M_\zg$ given by a zigzag arc $\zg$.
Let $\calp$ be an arc system. Then any homomorphism and extension between modules
$M_{\zg_i}$ and $M_{\zg_j}$ arise from oriented intersections at common endpoints, since there is no interior intersection between arcs $\zg_i$ and $\zg_j$. On the other hand, since $\calp$ is admissible, the weight of each oriented intersection of two arcs at a common endpoint is different from one. Therefore, by Proposition \ref{prop:main-extensions}, $\Ext^1(M_{\zg_i}, M_{\zg_j})=0$. Thus, $M=\bigoplus_{1\leqslant i \leqslant t}M_{\zg_i}$ is a rigid module.  

Conversely, if $M=\bigoplus_{1\leqslant i \leqslant t}M_{\zg_i}$ is a rigid module in $\ma$, then each arc $\zg_i$ is a simple arc, and a converse argument shows that $\{\zg_i,1\leqslant i \leqslant t\}$ is an admissible arc system.

Finally, it is not hard to see that the statement for the maximal rigid modules holds.
\end{proof}

It is noted that arc systems are widespread on a given coordinated-marked surface. Naturally, we wonder if there are many admissible arc systems.
We will show that it is easy to construct an admissible arc system from a given arc system by using the flips of arcs introduced in the following.

\begin{definition}\label{def:flip}
Let $\za_1$ and $\za_2$ be two simple (different) zigzag arcs on $(\cals,\calm,\zD^*)$ sharing a common endpoint $p$. Assume that $p$ gives rise to an oriented intersection $\mathfrak{p}$ from $\za_1$ to $\za_2$, and $\omega(\mathfrak{p})=1$. Denote by $\za$ the $\bpoint$-arc obtained by smoothing $\za_1$ and $\za_2$ at $p$. We call $\za$ the \emph{flip} of $\za_1$ and $\za_2$ at $p$.
\end{definition}

Since $\za_1$ and $\za_2$ are both zigzag arcs and the weight of $\mathfrak{p}$ equals one, the flip $\za$ is a zigzag arc, c.f. Figure \ref{fig:flip}. Furthermore, for a given zigzag arc $\zb$ which is not starting at $p$ and in between $\za_1$ and $\za_2$, since $\za$, $\za_1$, and $\za_2$ form a contractible triangle on the surface, there is no interior-oriented intersection between $\za$ and $\zb$ if and only if there is no interior-oriented intersection between $\za_i$, $i=1,2$, and $\zb$. In addition to the common endpoint $p$, we denote by $p_i$ the endpoints of $\za_i$, $i=1,2$, and denote by $\aaa_i$ the oriented intersections between $\za$ and $\za_i$ arising from $p_i$, see Figure \ref{fig:flip}. Then it is clear that the weights of $\aaa_1$ and $\aaa_2$ are zero.
We have the following lemma, which is useful in this section. 

\begin{lemma}\label{lem:flip}
Under the above notation, let $\zb$ be a zigzag arc that intersects $\za$ at $p_1$.

(1) If there is an oriented intersection $\mathfrak{p}_1$ from $\za_1$ to $\zb$, then $\mathfrak{p}'_1=\aaa_1\mathfrak{p}_1$ is an oriented intersection from $\za$ to $\zb$, and the weights of $\mathfrak{p}_1$ and $\mathfrak{p}'_1$ coincide. Conversely, any oriented intersection from $\za$ to $\zb$ arising from $p_1$ is of this form, except when $\zb$ coincides with $\za_1$, for which $\aaa_1$ is an oriented intersection from $\za$ to $\zb$ and there is no oriented intersection from $\za_1$ to $\zb$.

(2) If there is an oriented intersection $\mathfrak{p}'_1$ from $\zb$ to $\za$, then $\mathfrak{p}_1=\mathfrak{p}'_1\aaa_1$ is an oriented intersection from $\zb$ to $\za_1$, and the weights of $\mathfrak{p}_1$ and $\mathfrak{p}'_1$ coincide. Conversely, any oriented intersection from $\zb$ to $\za_1$ arising from $p_1$ is of this form, except when $\zb$ coincides with $\za$, for which $\aaa_1$ is an oriented intersection from $\zb$ to $\za_1$ and there is no oriented intersection from $\zb$ to $\za$.

Similar statements hold for a zigzag arc $\zb$ that intersects $\za$ at $p_2$.
\end{lemma}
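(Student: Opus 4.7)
The plan is a purely local analysis inside the unique polygon $\mathbb{P}$ of $\Delta^*$ containing the point $p_1$. The key geometric input, provided just above the lemma, is that $\omega(\mathfrak{a}_1)=0$: this means that in the clockwise fan of arcs emanating from $p_1$, the zigzag arcs $\alpha$ and $\alpha_1$ occupy consecutive positions, with no minimal $\Delta^*$-intersection swept between them. Equivalently, $\mathfrak{a}_1$ is a clockwise ``trivial'' interior angle from $\alpha$ to $\alpha_1$.

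For part (1), I would begin with a given oriented intersection $\mathfrak{p}_1$ from $\alpha_1$ to $\beta$ at $p_1$, and form the concatenation $\mathfrak{p}'_1 := \mathfrak{a}_1\mathfrak{p}_1$, which is itself a clockwise interior angle at $p_1$, hence an oriented intersection from $\alpha$ to $\beta$. Since weights count minimal $\Delta^*$-intersections swept and are additive under concatenation, $\omega(\mathfrak{a}_1)=0$ immediately gives $\omega(\mathfrak{p}'_1)=\omega(\mathfrak{p}_1)$. Conversely, any oriented intersection from $\alpha$ to $\beta$ at $p_1$ has its clockwise sweep begin at $\alpha$; since the very next arc in the fan is $\alpha_1$, the sweep is forced to pass through $\alpha_1$, so stripping off the initial $\mathfrak{a}_1$-segment yields an oriented intersection $\alpha_1\to\beta$ of the same weight. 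The exceptional case $\beta=\alpha_1$ is precisely the degenerate situation where $\mathfrak{a}_1$ itself is the (unique) oriented intersection from $\alpha$ to $\beta=\alpha_1$ and the zero angle from $\alpha_1$ to itself is excluded as an oriented intersection.

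Part (2) proceeds by a symmetric clockwise-angle argument: postcomposing an oriented intersection $\mathfrak{p}'_1$ from $\beta$ to $\alpha$ with $\mathfrak{a}_1$ produces an oriented intersection $\beta\to\alpha_1$ with the same weight, and conversely any oriented intersection $\beta\to\alpha_1$ at $p_1$ must factor through $\alpha$ because $\alpha$ sits in the clockwise-adjacent position immediately preceding $\alpha_1$ in the fan, the exception being $\beta=\alpha$. The $p_2$-analogue is handled by identical local analysis after replacing $(\alpha_1,\mathfrak{a}_1)$ by $(\alpha_2,\mathfrak{a}_2)$ and invoking $\omega(\mathfrak{a}_2)=0$.

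The main obstacle I anticipate is handling the case when $p_1$ is a puncture, since Definition \ref{definition: weight} then produces an infinite family of oriented intersections at $p_1$ and identifies opposite angles. The argument should still go through: the consecutiveness of $\alpha$ and $\alpha_1$ in the cyclic order around $p_1$ is a purely local consequence of $\omega(\mathfrak{a}_1)=0$, and once this is in hand, weight additivity together with the short list of degenerate cases $\beta\in\{\alpha,\alpha_1\}$ settles the bijection in every configuration.
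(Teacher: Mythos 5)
Your argument takes essentially the same route as the paper: the paper dispatches the lemma with ``straightforward, see Figure~\ref{fig:flip}'', and your write-up is precisely the local angular bookkeeping that figure is meant to convey, with the additivity of weight under concatenation of oriented intersections doing the real work. The forward directions of (1) and (2) and the degenerate cases $\zb\in\{\za,\za_1\}$ are handled correctly.

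There is, however, one step where the justification you give is not the right one, and it is exactly the step that carries the converse directions. You assert that $\omega(\aaa_1)=0$ means ``$\za$ and $\za_1$ occupy consecutive positions in the clockwise fan of arcs emanating from $p_1$,'' and you then use this to force every oriented intersection $\za\to\zb$ to sweep through $\za_1$. But weight zero only says there is no \emph{minimal oriented intersection of $\rpoint$-arcs in $\zD^*$} inside the angle $\aaa_1$; it says nothing about whether another zigzag $\bpoint$-arc $\zb$ could sit strictly between $\za$ and $\za_1$ in the clockwise cyclic order at $p_1$. If such a $\zb$ existed, the oriented intersection $\za\to\zb$ would \emph{not} factor through $\za_1$, and the converse in (1) would fail. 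The fact that actually rules this out is the triangle geometry, not the weight: $\za$, $\za_1$, $\za_2$ bound a contractible triangle with a cusp at $p_1$, so any arc $\zb$ entering the sector between $\za$ and $\za_1$ at $p_1$ enters that triangle and must either leave it by crossing one of $\za,\za_1,\za_2$ (an interior intersection, excluded in every context where this lemma is applied since $\zb$ is taken from an arc system containing the flip triangle) or exit at a vertex $p$ or $p_2$, in which case $\zb$ is homotopic to $\za_1$ or $\za$ -- exactly the listed exceptional cases. With this substituted for the weight-zero argument, the consecutiveness claim is correct and the rest of your proof (including the puncture case, where the same cusp argument works in the cyclic order) goes through.
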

\begin{proof}
The proof of the lemma is straightforward, seeing the picture in Figure \ref{fig:flip}.
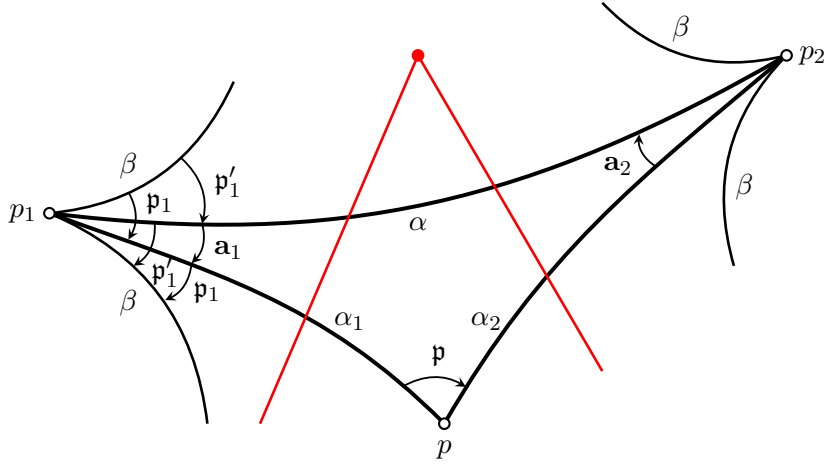
\begin{figure} 
\begin{tikzpicture}[>=stealth,scale=.7]
					
\draw [bend right, line width=1pt] (-7,0) to (-3.5,2.5);
\draw [bend left, line width=1pt] (-7,0) to (-4,-4);
\draw [bend left, line width=1pt] (7,3) to (3.5,4);
\draw [bend right, line width=1pt] (7,3) to (6,-1);
\draw [bend left, line width=.7pt, ->] (-5.5,0.4) to (-5.5,-0.5);
\draw [bend left, line width=.7pt, ->] (-5,-0.2) to (-5.4,-1);
\draw [bend left, line width=.7pt, ->] (-4.5,1.05) to (-4.1,-0.2);
\draw [bend left, line width=.7pt, ->] (-4.1,-0.2) to(-4.3,-0.95) ;
\draw [bend left, line width=.7pt, ->] (-4.3,-0.95) to(-4.8,-1.65) ;
\draw [bend left, line width=.7pt, ->] (-0.3,-3.3) to(.91,-3.3) ;
\draw [bend left, line width=.7pt, ->] (4.5,0.9) to(4.2,1.5) ;

\draw[cyan,line width=1.5pt,black] (-7,0) to[out=-7,in=-150](7,3);
\draw[cyan,line width=1.5pt,black] (-7,0) to[out=-20,in=135](0.5,-4);
\draw[cyan,line width=1.5pt,black] (7,3) to[out=-140,in=60](0.5,-4);
\draw [ line width=1pt,red] (0,3) to(-3,-4) ;
\draw [ line width=1pt,red] (0,3) to(3.5,-3) ;
\draw[thick,fill=white] (-7,0) circle (0.1);\draw[thick,fill=white] (7,3) circle (0.1);
\draw[thick,red, fill=red] (0,3) circle (0.1);
\draw[thick,fill=white] (0.5,-4) circle (0.1);
\draw (0.5,-4.5) node {$p$};
\draw (0.4,-2.7) node {$\mathfrak{p}$};
					
\draw (7.5,3) node {$p_2$};
\draw (5,3.5) node {$\beta$};
\draw (6.2,0.5) node {$\beta$};
\draw (0,-.2) node {$\alpha$};
\draw (3.8,.9) node {$\aaa_{2}$};
					
\draw (-7.5,0) node {$p_1$};
\draw (-5.5,0.9) node {$\beta$};
\draw (-5.5,-1.7) node {$\beta$};
\draw (-4.9,0.2) node {$\mathfrak{p}_1$};
\draw (-3.6,0.6) node {$\mathfrak{p}'_1$};
\draw (-4,-1.5) node {$\mathfrak{p}_1$};
\draw (-4.8,-1.12) node {$\mathfrak{p}'_1$};
\draw (-1.3,-2.05) node {$\alpha_1$};
\draw (-3.6,-.7) node {$\aaa_1$};
\draw (1.3,-2.05) node {$\alpha_2$};
\end{tikzpicture}
	\caption{The arc $\za$ is the flip of $\za_1$ and $\za_2$ at a common endpoint $p$. Then $\za$ is a zigzag arc. For any zigzag arc $\zb$, there is a bijection between oriented intersections $\mathfrak{p}_i$ of $\za_i$ and $\zb$ at $p_i$, and oriented intersections $\mathfrak{p}'_i$ of $\za$ and $\zb$ at $p_i$, and we have $|\mathfrak{p}_i|=|\mathfrak{p}'_i|$, see the precise statements in Lemma \ref{lem:flip}. } 
	\label{fig:flip} 
\end{figure}

\end{proof}
Let $\calp$ be an arc system on $(\cals,\calp,\zD^*)$. Assume that there are two arcs $\za_1$ and $\za_2$ that share a common endpoint $p$ such that the associated oriented intersection $\mathfrak{p}$ has weight one.  Without loss of generality, we further assume $\mathfrak{p}$ is minimal, that is, there is no arc starting from $p$ and in between $\za_1$ and $\za_2$. We replace $\za_1$ or $\za_2$ in $\calp$ by the flip $\za$ of them at $p$, and denote the new set of arcs by $\calp'$. Note that if $\za$ is already in $\calp$, then we just delete $\za_1$ or $\za_2$. By Lemma \ref{lem:flip}, $\calp'$ is an arc system, and the number of oriented intersections with weight one reduces at least one (there may exist other arcs in $\calp$ which have endpoint $p$, except for $\za_1$ and $\za_2$). Since there are only finitely many oriented intersections between the arcs in $\calp$, starting from $\calp$, we finally obtain an admissible arc system by literally replacing the arcs as above. Denote the final admissible arc system by $\calp_{\textbf{adm}}$. Furthermore, if the initial arc system is a partial triangulation, then the final admissible arc system is still a partial triangulation since there is no $\rpoint$-point in the triangle formed by the arcs $\za_i$ and their flip $\za$.

Flip of arcs gives rise to an equivalent relation on the set of arc systems as well as the set of partial triangulations, see the following proposition, whose proof is straightforward.

\begin{proposition-definition}\label{prop-def:flip}
Let $\calp$ and $\calp'$ be two arc systems (partial triangulations, respectively). We call them \emph{flip equivalent}, if the associated admissible arc systems (partial triangulation, respectively) $\calp_{\textbf{adm}}$ and $\calp'_{\textbf{adm}}$ coincide. The flip equivalence gives rise to an equivalent relation on the set of arc systems as well as the set of partial triangulations. 
\end{proposition-definition}

It seems interesting to study more about the properties of the flip equivalence, for example, the relations with derived equivalences.
The rest of this section is devoted to describing maximal admissible arc systems. We will show that they are exactly the $5$-partial triangulations.

\subsection{Reductions for rigid modules by cutting surface}\label{section:max-rig-mod2}
This subsection prepares some reduction tools to prove the main theorems in the next subsection.

Let $(\cals,\calm,\zD^*)$ be a coordinated-marked surface and let $\zg$ be a simple zigzag arc on the surface with endpoints $p_1,p_2 \in \calm_\bpoint\cup \calp_\bpoint$.
In \cite{C24}, the author introduced a new coordinated-marked surface $(\cals_\zg,{\calm_\zg},{\zD^*_\zg})$ obtained by cutting $(\cals,\calm,\zD^*)$ along $\zg$. We briefly recall the construction as follows. The details can be found in \cite[Section 2.2]{C24}.

The surface $\cals_\zg$ is obtained by cutting $\cals$ along $\zg$, where $\zg$ becomes two boundary segments, which are denoted by $\zg'$ and $\zg''$, with endpoints $p_1',p_2'$ and $p_1'',p_2''$ respectively.

The set $\calm_\zg$ is defined as 
$$\calm_\zg=\calm\setminus \{p_1,p_2\}\cup\{p_1',p_2',p_1'',p_2''\}\cup\{q',q''\},$$
where $q'$ and $q''$ are newly added $\rpoint$-points which locate on $\zg'$ and $\zg''$ respectively.
Note that two vertices in $\{p_1',p_2',p_1'',p_2''\}$ may coincide. For example, when $p_i$ is a puncture, we have $p'_i=p''_i$, see the list of all cases in the Appendix of \cite{C24}. 

The set $\zD^*_\zg$ is obtained in the following steps.
Let $\call=\{\ell^*\}$ be the set of arcs in $\zD^*$ that intersect $\zg$. Assume that $\zg$ cuts each $\ell^*$ into several segments. There are two types of endpoints in such a segment, it may be an original endpoint of $\ell^*$ or an interior intersection point with $\zg$. When cutting the surface along $\zg$, we smoothly move the second type of endpoint to the newly added $\rpoint$-point $q'/q''$ along $\zg$. In this way, we obtain new $\rpoint$-arcs on $(\cals_\zg,\calm_\zg)$, which are denoted by $\ell^*_1, \ell^*_2,\cdots,\ell^*_t$. Finally, $\zD^*_\zg$ is defined as the set 
$$\zD^*_\zg=\zD^*\setminus \call\cup\{\ell^*_1,\cdots,\ell^*_t ~| ~\ell^*\in \call\}$$ of $\rpoint$-arcs on $\cals_\zg$, where we identify the arcs which are homotopic with each other.

\begin{example}\label{ex:cutting}
See Figure \ref{fig:ex-cutting} for a concrete example of cutting a coordinated-marked surface.
	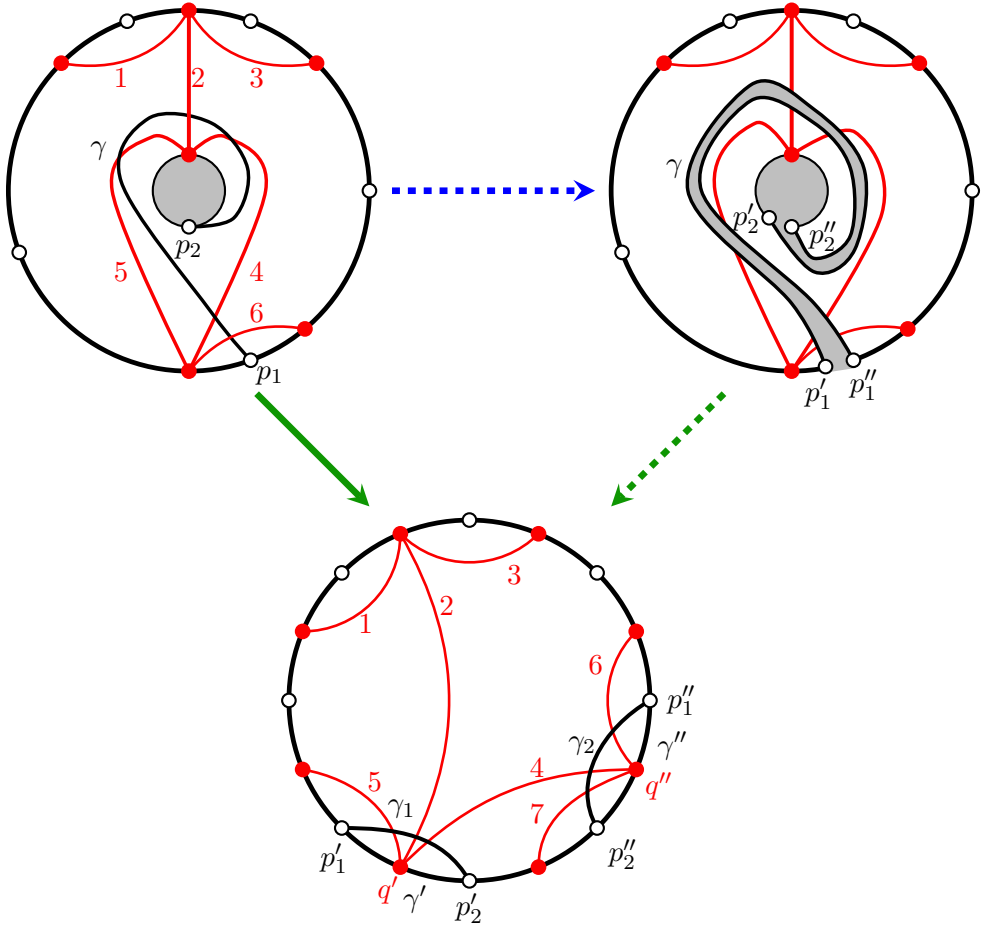
\begin{figure}[ht]
		\begin{center}
\begin{tikzpicture}[>=stealth,scale=0.6]
\draw[line width=1.8pt,fill=white] (0,0) circle (4cm);
				\draw[thick,fill=gray!50] (0,0) circle (0.8cm);
				\path (0:4) coordinate (b1)
				(70:4) coordinate (b2)
				(110:4) coordinate (b3)
				(135:4) coordinate (b4)
				(200:4) coordinate (b5)
				(90:4) coordinate (b6)
				(-90:4) coordinate (b7)
				(-70:4) coordinate (b8)
				(-50:4) coordinate (b9)
				(45:4) coordinate (b10);
				
				\draw[red,line width=1.3pt]plot [smooth,tension=0.6] coordinates {(0,-4) (-1.7,0) (-0.8,1.2) (0,0.8)};
				\draw[red,line width=1.3pt]plot [smooth,tension=0.6] coordinates {(0,-4) (1.7,0) (.8,1.2) (0,0.8)};
				\draw[black,line width=1.3pt]plot [smooth,tension=0.6] coordinates {(b8) (0.5,-2.7) (-1.5,0.2) (-0.9,1.6)(0.5,1.5)(1.3,0.5)(1,-.6)(0,-0.8)};			
				\draw [red, line width=1.5pt] (b6) to (0,0.8);
				\draw[cyan,line width=1pt,red] (b6) to[out=-120,in=-10](b4);
				\draw[cyan,line width=1pt,red] (b6) to[out=-60,in=-170](b10);
				\draw[cyan,line width=1pt,red] (b7) to[out=50,in=170](b9);
				\draw[red] (-1.5,2.5) node {$1$};
				\draw[red] (.2,2.5) node {$2$};
				\draw[red] (1.5,2.5) node {$3$};
				\draw[red] (1.5,-1.8) node {$4$};
				\draw[red] (-1.5,-1.8) node {$5$};
				\draw[red] (1.5,-2.7) node {$6$};
				\draw[black] (-2,0.9) node {$\gamma$};
				\draw[black] (0,-1.3) node {$p_{2}$};
				\draw[black] (1.8,-4.1) node {$p_{1}$};
	\draw[thick,fill=white] (b1) circle (0.15cm)
				(b2) circle (.15cm)
				(b3) circle (.15cm)
				(b5) circle (.15cm)
				(b8) circle (.15cm);

				\draw[thick,red, fill=red] (b4) circle (0.15cm)
				(b6) circle (.15cm)
				(b7) circle (.15cm)
				(b9) circle (.15cm)
				(b10) circle (.15cm);
				
				\draw[thick, red ,fill=red] (0,0.8) circle (0.15cm);
				\draw[thick, black ,fill=white] (0,-0.8) circle (0.15cm);

				\draw[line width=2.5pt,blue, dashed, ->] (4.5, 0) -- (9, 0) ;
				\draw[line width=2.5pt,dark-green,  ->] (1.5, -4.5) -- (4, -7) ;			
\end{tikzpicture}
\begin{tikzpicture}[>=stealth,scale=0.6]
				\draw[line width=1.8pt,fill=white] (0,0) circle (4cm);
				\draw[thick,fill=gray!50] (0,0) circle (0.8cm);

				\path (0:4) coordinate (b1)
				(70:4) coordinate (b2)
				(110:4) coordinate (b3)
				(135:4) coordinate (b4)
				(200:4) coordinate (b5)
				(90:4) coordinate (b6)
				(-90:4) coordinate (b7)
				(-70:4) coordinate (b8)
				(-50:4) coordinate (b9)
				(45:4) coordinate (b10);
				
				\draw[red,line width=1.3pt]plot [smooth,tension=0.6] coordinates {(0,-4) (-1.7,0) (-0.8,1.5) (0,0.8)};
				\draw[red,line width=1.3pt]plot [smooth,tension=0.6] coordinates {(0,-4) (2,-.5) (1.4,1.3) (0,0.8)};

				\draw [red, line width=1.5pt] (b6) to (0,0.8);
				\draw[cyan,line width=1pt,red] (b6) to[out=-120,in=-10](b4);
				\draw[cyan,line width=1pt,red] (b6) to[out=-60,in=-170](b10);
				\draw[cyan,line width=1pt,red] (b7) to[out=50,in=170](b9);

			\draw[black] (-1,-0.6) node {$p_{2}'$};
				\draw[black] (0.7,-1) node {$p_{2}''$};
				\draw[black] (0.6,-4.5) node {$p_{1}'$};
				\draw[black] (1.6,-4.3) node {$p_{1}''$};
				\draw[black,line width=1.3pt, fill=gray!50]plot [smooth,tension=0.6] coordinates {(0.75,-3.9) (0,-2.5) (-2.3,0) (-0.9,2.35)(0.5,1.8)(1.6,0.5)(1.4,-1.3)(0.5,-1.8)(-0.5,-0.6)(0,-0.8)(0.5,-1.5)(1.3,-1)(1.3,0.5)(0.5,1.5)(-0.9,2)(-2,0)(0.5,-2.5) (b8)};
				
				\draw [gray!50, line width=4.5pt] (-0.5,-0.5) to (0,-0.8);
				\draw [gray!50, line width=4.5pt] (0.75,-3.9) to (1.4,-3.8);
	\draw[thick,fill=white] (b1) circle (0.15cm)
				(b2) circle (.15cm)
				(b3) circle (.15cm)
				(b5) circle (.15cm)
				(b8) circle (.15cm);

				\draw[thick,red, fill=red] (b4) circle (0.15cm)
				(b6) circle (.15cm)
				(b7) circle (.15cm)
				(b9) circle (.15cm)
				(b10) circle (.15cm);
				\draw[thick, red ,fill=red] (0,0.8) circle (0.15cm);
				\draw[thick, black ,fill=white] (0,-0.8) circle (0.15cm);
				\draw[thick, black ,fill=white] (-0.5,-0.6) circle (0.15cm);
				\draw[thick, black ,fill=white] (0.75,-3.9) circle (0.15cm);
				\draw[line width=2.5pt,dark-green, dashed, ->] (-1.5, -4.5) -- (-4, -7) ;
		\end{tikzpicture}
		\begin{tikzpicture}[>=stealth,scale=0.6]
				\draw[line width=1.8pt,fill=white] (0,0) circle (4cm);
			
				\path (0:4) coordinate (b1)
				(45:4) coordinate (b2)
				(90:4) coordinate (b3)
				(135:4) coordinate (b4)
				(180:4) coordinate (b5)
				(225:4) coordinate (b6)
				(270:4) coordinate (b7)
				(315:4) coordinate (b8)
				
				(22.5:4) coordinate (r1)
				(67.5:4) coordinate (r2)
				(112.5:4) coordinate (r3)
				(157.5:4) coordinate (r4)
				(202.5:4) coordinate (r5)
				(247.5:4) coordinate (r6)
				(292.5:4) coordinate (r7)
				(337.5:4) coordinate (r8);
				\draw[cyan,line width=1pt,red] (r3) to[out=-90,in=0](r4);
				\draw[cyan,line width=1pt,red] (r3) to[out=-45,in=-135](r2);
				\draw[cyan,line width=1pt,red] (r3) to[out=-60,in=60](r6);
				\draw[cyan,line width=1pt,red] (r6) to[out=90,in=-10](r5);
				\draw[cyan,line width=1pt,red] (r6) to[out=45,in=180](r8);
				\draw[cyan,line width=1pt,red] (r8) to[out=-160,in=90](r7);
				\draw[cyan,line width=1pt,red] (r8) to[out=135,in=-135](r1);
				\draw[cyan,line width=1.5pt,black] (b6) to[out=0,in=120](b7);
				\draw[cyan,line width=1.5pt,black] (b8) to[out=120,in=-150](b1);
				\draw[thick,fill=white] (b1) circle (0.15cm)
				(b2) circle (.15cm)
				(b3) circle (.15cm)
				(b4) circle (.15cm)
				(b5) circle (.15cm)
				(b6) circle (.15cm)
				(b7) circle (.15cm)
				(b8) circle (.15cm);

				\draw[thick,red, fill=red] (r1) circle (0.15cm)
				(r2) circle (0.15cm)
				(r3) circle (0.15cm)
				(r4) circle (0.15cm)
				(r5) circle (0.15cm)
				(r6) circle (0.15cm) 
				(r7) circle (0.15cm)
				(r8) circle (0.15cm);	
				
				\draw[red] (-2.3,1.7) node {$1$};
				\draw[red] (-0.5,2.1) node {$2$};
				\draw[red] (1,2.8) node {$3$};
				\draw[red] (1.5,-1.5) node {$4$};
				\draw[red] (-2.1,-1.8) node {$5$};
				\draw[red] (2.8,0.8) node {$6$};
				\draw[red] (1.5,-2.5) node {$7$};
				\draw[red] (-1.8,-4.2) node {$q'$};
				\draw[red] (4.2,-1.9) node {$q''$};
				
				\draw[black] (-1.5,-2.4) node {$\gamma_1$};
				\draw[black] (-1.2,-4.4) node {$\gamma'$};
				\draw[black] (2.5,-1) node {$\gamma_2$};
				\draw[black] (4.5,-1) node {$\gamma''$};
				\draw[black] (0,-4.6) node {$p_{2}'$};
				\draw[black] (3.4,-3.4) node {$p_{2}''$};
				\draw[black] (-3,-3.5) node {$p_{1}'$};
				\draw[black] (4.7,0) node {$p_{1}''$};
	\end{tikzpicture}
		\end{center}	
  \caption{
 An example of the cutting surface.} 
	\label{fig:ex-cutting}
	\end{figure}
\end{example}

A key observation in \cite{C24} is that $\zD^*_\zg$ is a simple coordinate on $(\cals_\zg,\calm_\zg)$, see \cite[Proposition-Definition 2.6]{C24}.
We call $(\cals_\zg,{\calm_\zg},{\zD^*_\zg})$ the \emph{cutting surface} of $(\cals,\calm,\zD^*)$ along $\zg$.

Denote by $\zg_1$ and $\zg_2$ the $\bpoint$-arcs on $(\cals_\zg,\calm_\zg)$ that form triangles with the new boundary segments $\zg'$ and $\zg''$, respectively. Note that these two triangles contain the two newly added $\rpoint$-points $q'$ and $q''$, and both $\zg_1$ and $\zg_2$ are simple zigzag arcs on $(\cals_\zg,{\calm_\zg},{\zD^*_\zg})$, see Figure \ref{fig:ex-cutting} for an example.

Let $\za$ be an $\bpoint$-arc
on $(\cals,\calm)$. If $\za$ and $\zg$ intersect in the interior of the surface, then it disappears when cutting the surface along $\zg$. If $\za=\zg$, then it induces two $\bpoint$-arcs $\zg_1$ and $\zg_2$ as described above. Now assume that $\za\neq \zg$ does not have interior intersections with $\zg$. Then $\za$ and $\zg$ share common endpoints or are completely disjoint. For both cases, $\za$ induces a (unique) $\bpoint$-arc on $(\cals_\zg,\calm_\zg)$, which is denoted by $\widehat\za$. The associated map will be denoted by $\widehat{\bullet}$.
We introduce the following four sets of arcs:
$$\mathfrak{A}:=\{\text{$\bpoint$-arc $\za$ on $(\cals,\calm,\zD^*)$ which has no interior intersections with $\zg$}\}\setminus \{\zg\};$$ 
$$\mathfrak{Z}:=\{\text{zigzag arc $\za$ on $(\cals,\calm,\zD^*)$ which has no interior intersections with $\zg$}\}\setminus \{\zg\};$$
$$\mathfrak{A}_\zg:=\{\text{$\bpoint$-arc $\widehat{\za}$ on $(\cals_\zg,\calm_\zg,\zD_\zg^*)$}\}\setminus \{\zg_1,\zg_2\};$$ 
$$\mathfrak{Z}_\zg:=\{\text{zigzag arc $\widehat{\za}$ on $(\cals_\zg,\calm_\zg,\zD_\zg^*)$}\}\setminus \{\zg_1,\zg_2\}.$$ 

The following lemma is proved in \cite[Lemma 2.10]{C24}.
\begin{lemma}\label{lemma:corresp.}
The map $\widehat{\bullet}$ establishes an one-to-one correspondence from $\mathfrak{A}$ to $\mathfrak{A}_\zg$, as well as an one-to-one correspondence from $\mathfrak{Z}$ to $\mathfrak{Z}_\zg$. Furthermore, $\widehat{\za}$ is simple if and only if $\za$ is simple.
\end{lemma}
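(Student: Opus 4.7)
The plan is to exploit the fact that cutting $\cals$ along $\zg$ is a homeomorphism on the complement of $\zg$, and then analyze the splitting of polygons of $\zD^*$ to verify that the zigzag structure is preserved. More precisely, I would first observe that the cutting construction yields a homeomorphism $\Phi\colon \cals\setminus\zg \xrightarrow{\sim} \cals_\zg\setminus(\zg'\cup\zg'')$ which agrees with the identification on marked points as specified in the definition of $\calm_\zg$. For any $\za\in \mathfrak{A}$, the interior of $\za$ lies in $\cals\setminus\zg$ since $\za$ has no interior intersection with $\zg$, so $\Phi(\za)$ extends to a well-defined $\bpoint$-arc $\widehat\za$ on $(\cals_\zg,\calm_\zg)$; the convention of viewing the endpoint of a loop as two distinct endpoints handles the case where $\za$ is a loop whose endpoint lies on $\zg$.

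Injectivity of $\widehat{\bullet}\colon \mathfrak{A}\to \mathfrak{A}_\zg$ is immediate from $\Phi$ being a homeomorphism, and for surjectivity any $\widehat\zb\in\mathfrak{A}_\zg$ can be pulled back along $\Phi^{-1}$ to a curve in $\cals\setminus\zg$ that completes to an $\bpoint$-arc on $(\cals,\calm)$; the exclusion of $\zg_1$ and $\zg_2$ from $\mathfrak{A}_\zg$ is precisely what prevents the pullback from recovering $\zg$. Preservation of simplicity in both directions follows from the same homeomorphism, since self-intersections of $\za$ in the interior correspond bijectively with self-intersections of $\widehat\za$, and endpoint-simplicity is unaffected by the duplication of boundary at $\zg$.

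The main obstacle will be showing that $\widehat{\bullet}$ restricts to a bijection $\mathfrak{Z}\leftrightarrow \mathfrak{Z}_\zg$, for which I would carry out a local analysis of the polygons. Polygons of $\zD^*$ that are disjoint from $\zg$ transfer unchanged to polygons of $\zD^*_\zg$, while any polygon $\bbp$ of $\zD^*$ crossed by $\zg$ is subdivided, with the new boundary arcs being segments of the form $\ell^*_j$ (obtained from arcs $\ell^*\in\zD^*$ cut by $\zg$) together with portions of $\zg'$ or $\zg''$ bearing the new $\rpoint$-points $q',q''$. Given $\za\in\mathfrak{A}$, the sequence of polygons of $\zD^*_\zg$ traversed by $\widehat\za$ is in order-preserving bijection with the sequence $\bbp_0,\bbp_1,\ldots,\bbp_{n+1}$ of polygons of $\zD^*$ traversed by $\za$, and at each crossing the arcs $\ell^*_i,\ell^*_{i+1}$ of $\zD^*$ correspond to arcs of $\zD^*_\zg$ which retain the predecessor/successor relation inside the (possibly subdivided) polygon. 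Thus the local condition in Definition \ref{definition:zigzag arcs}, including the puncture-triangle constraint, is preserved by $\widehat{\bullet}$ and by its inverse via $\Phi^{-1}$, yielding the second bijection.
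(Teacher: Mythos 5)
The paper does not prove this lemma itself; it only cites \cite[Lemma 2.10]{C24}, so a line-by-line comparison with the paper's argument is not possible. Your overall strategy—transfer arcs through the cutting map, which is a homeomorphism away from $\zg$, and then check the polygon data locally—is the natural one, and I believe it is essentially what any correct proof must do. However, as written the argument glosses over the two points that actually carry the weight. First, arcs are classified only up to homotopy, so well-definedness, injectivity, and preservation of simplicity of $\widehat{\bullet}$ do not follow merely from $\Phi$ being a homeomorphism on $\cals\setminus\zg$: one needs the standard innermost-disk (minimal position) argument, which uses that $\zg$ is a \emph{simple} arc, to replace any homotopy in $\cals$ by one avoiding $\zg$, and similarly to pass from pointwise to geometric intersection numbers when treating simplicity. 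Note also that $\Phi$ is not defined at the marked points $p_1,p_2$, so the endpoint bookkeeping (including the puncture case $p'_i=p''_i$) must be handled separately rather than absorbed into $\Phi$. Second, the zigzag verification is asserted rather than checked: when $\zg$ and $\za$ both traverse a polygon $\bbp_{i+1}$ and $\zg$ cuts one or both of $\ell^*_i,\ell^*_{i+1}$, you must argue—using that $\zg$ is itself zigzag, so its segment in $\bbp_{i+1}$ links two consecutive $\rpoint$-arcs—that the pieces $\widehat\za$ crosses remain consecutive in the resulting polygon of $\zD^*_\zg$, either because they still share the original $\rpoint$-corner or because they now meet at the freshly introduced $\rpoint$-point $q'$ or $q''$; the puncture-triangle clause of Definition \ref{definition:zigzag arcs} also requires a separate check, which the proposal does not address. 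None of these are fatal to the approach, but they are the content of the lemma and should be made explicit.
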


Let $\zG=\{\zg^1,\cdots,\zg^t\}$ be an admissible arc system on $(\cals,\calm,\zD)$. In particular, each $\zg^i$ is a simple $\bpoint$-arc and we can define the cutting surface $(\cals_{\zg^i},\calm_{\zg^i},\zD_{\zg^i})$ obtained by cutting $(\cals,\calm,\zD)$ along $\zg^i$. Inductively, denote by $(\cals_\zG,\calm_\zG,\zD_\zG)$ the surface obtained by cutting
$(\cals,\calm,\zD)$ along the arcs in $\zG$. Note that $(\cals_\zG,\calm_\zG,\zD_\zG)$ is independent of the order of the arcs that cut the surface. Then there are canonical zigzag arcs $\zg^i_1$ and $\zg^i_2$ on $(\cals_\zG,\calm_\zG,\zD_\zG)$ induced by $\zg^i$.

The following proposition provides us with an inductive way to solve problems, which will be used frequently in the rest of this section.
\begin{proposition}\label{prop:red2}
The set $\zG'=\{\zg^i_1,\zg^i_2, 1\leqslant i \leqslant t\}$ is an admissible arc system on $(\cals_\zG,\calm_\zG,\zD_\zG)$. Furthermore, there is a one-to-one correspondence between the set of (maximal resp.) admissible arc systems on $(\cals,\calm,\zD)$ which contain $\zG$ and the set of (maximal resp.) admissible arc systems on $(\cals_\zG,\calm_\zG,\zD_\zG)$ which contain $\zG'$.
\end{proposition}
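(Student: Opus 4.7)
\emph{Plan.} My plan is to induct on $t=|\zG|$, reducing the general statement to the case of a single cut ($t=1$) and iterating. Throughout I rely on Lemma \ref{lemma:corresp.}, which already supplies the underlying bijection $\widehat{\bullet}$ between simple zigzag arcs on $(\cals,\calm,\zD^*)$ disjoint from $\zg$ and simple zigzag arcs distinct from $\zg_1,\zg_2$ on the cutting surface. The content of the proposition beyond Lemma \ref{lemma:corresp.} is therefore the preservation of admissibility, i.e.\ of the condition that weights of oriented intersections differ from one.

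For the base case $t=1$, I would first verify that $\zG'=\{\zg_1,\zg_2\}$ is itself an admissible arc system. Both arcs are simple and zigzag, as remarked just above Lemma \ref{lemma:corresp.}. They have no interior intersection, since each lives in a small neighborhood of the boundary segment $\zg'$ or $\zg''$ on opposite sides of the cut, and any oriented intersection at a common endpoint (arising only in the puncture or self-folded configurations enumerated in the Appendix of \cite{C24}) can be seen to have weight zero by direct local inspection. I would then define
\[
\Phi(\calp)=\{\widehat{\za}:\za\in\calp\setminus\{\zg\}\}\cup\{\zg_1,\zg_2\},
\]
together with the obvious inverse $\calq\mapsto\{\za:\widehat{\za}\in\calq\setminus\{\zg_1,\zg_2\}\}\cup\{\zg\}$, and observe that Lemma \ref{lemma:corresp.} makes both maps well-defined bijections between the underlying sets of simple zigzag arcs.

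The crux is to check that admissibility is preserved in both directions. For a pair $\za,\zb\in\calp\setminus\{\zg\}$ meeting at an endpoint $q$ not lying on $\zg$, the polygon of $\zD^*_\zg$ at $q$ is identified with the polygon of $\zD^*$ at $q$, so the weights of oriented intersections of $\widehat{\za},\widehat{\zb}$ agree with those of $\za,\zb$. When $q$ lies on $\zg$, the polygon of $\zD^*$ at $q$ is subdivided by the segment of $\zg$ and the new $\rpoint$-point $q'$ or $q''$, and a local count of minimal oriented intersections shows the weights still coincide. Finally, any oriented intersection of $\widehat{\za}$ with $\zg_1$ or $\zg_2$ occurs at an endpoint of $\za$ that is also an endpoint of $\zg$, and a direct local computation in the same spirit as Lemma \ref{lem:flip} matches it with an oriented intersection of $\za$ with $\zg$ of the same weight. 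Since $\zg\in\calp$ and $\calp$ is admissible, no such intersection has weight one, so $\Phi(\calp)$ is admissible; the converse direction is symmetric. The inductive step then cuts successively along $\zg^1,\widehat{\zg^2},\ldots,$ (each remaining $\widehat{\zg^i}$ being a simple zigzag arc on the current cut surface by Lemma \ref{lemma:corresp.} and admissibility of $\zG$) and composes the resulting bijections; the maximality statement follows since each bijection is inclusion-preserving on arc systems.

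The main obstacle is the local weight computation described above, namely matching minimal oriented intersections in the polygons of $\zD^*$ crossed by $\zg$ with those in their subdivisions in $\zD^*_\zg$, especially at endpoints of $\zg$. The numerous topological configurations (boundary versus puncture endpoints, coincidence of the two endpoints of $\zg$, and the various possible positions of the endpoint of another arc within the subdivided polygon) force a case analysis parallel to the one in \cite[Appendix]{C24}, but each individual case reduces to a routine check.
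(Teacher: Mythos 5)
The overall architecture of your proposal matches the paper's: reduce to a single cut, use Lemma \ref{lemma:corresp.} for the underlying bijection, and check that weights of oriented intersections are preserved, with a case split depending on whether the endpoint lies on $\zg$. However, there is a genuine gap in your treatment of the inverse direction, which you dismiss with ``the converse direction is symmetric.'' It is not.

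The asymmetry is this. Every oriented intersection of arcs in $\Phi(\calp)$ on the cutting surface arises from one on the original surface, with the same weight, so admissibility of $\calp$ does pass to $\Phi(\calp)$. But not every oriented intersection on the original surface survives the cut: if $\za,\zb\in\calp$ meet $\zg$ at a common endpoint $p$ and an oriented intersection $\mathfrak{p}\colon\za\to\zb$ at $p$ factors as $\mathfrak{p}=\mathfrak{p}_1\mathfrak{p}_2$ through $\zg$, then $\mathfrak{p}$ simply disappears on $(\cals_\zg,\calm_\zg,\zD^*_\zg)$ (the fan through $p$ is severed by the new boundary). Admissibility of $\Phi(\calp)$ therefore says nothing directly about $\omega(\mathfrak{p})$, and your bijection-of-intersections argument does not cover it. The paper closes this by writing $\omega(\mathfrak{p})=\omega(\mathfrak{p}_1)+\omega(\mathfrak{p}_2)$ with $\omega(\mathfrak{p}_i)\geqslant 0$ and $\omega(\mathfrak{p}_i)=\omega(\widehat{\mathfrak{p}}_i)\neq 1$ (since $\zg_1,\zg_2\in\Phi(\calp)$), which forces $\omega(\mathfrak{p})\neq 1$. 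You should add this observation; without it, the converse implication $\Phi(\calp)$ admissible $\Rightarrow$ $\calp$ admissible is not established. The rest of your sketch (base case, weight preservation at endpoints on and off $\zg$, matching intersections with $\zg_1,\zg_2$ against intersections with $\zg$, induction and maximality) is in line with the paper.
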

\begin{proof}
We only need to prove the case where $\zG=\{\zg\}$. The general case can be proved inductively. Let $\calp$ be an admissible arc system that contains $\zg$. Denote by $\widehat{\calp}=\{\widehat{\za}  \text{~for~} \za \in \calp\}\cup\{\zg_1,\zg_2\}$. Then, according to Lemma \ref{lemma:corresp.}, each arc in $\widehat{\calp}$ is a simple zigzag arc on $(\cals_\zg,\calm_\zg,\zD^*_\zg)$. Furthermore, there is no interior intersection between the arcs in $\widehat{\calp}$, since $\calp$ is an arc system, and no new intersections appear when cutting the surface. Therefore, $\widehat{\calp}$ is an arc system. Now we prove that it is admissible. 

Assume that there is a (non-interior) oriented intersection between two $\bpoint$-arcs in $\widehat{\calp}$. Again, since when cutting the surface, no new intersection of arcs appears, such an oriented intersection is induced from an oriented intersection of arcs in $\calp$. We denote them by $\widehat{\mathfrak{p}}$ and $\mathfrak{p}$, respectively. We will prove that $\omega(\widehat{\mathfrak{p}})=\omega(\mathfrak{p})$. Then $\widehat{\calp}$ is admissible, since $\calp$ is.
Assume that $\mathfrak{p}$ arises from a  $\bpoint$-point $p$.
We have two cases depending on the position of $p$. 

Case I. The $\bpoint$-point $p$ is an endpoint of $\zg$. We have two sub-cases. When $p$ is a boundary point, it induces two $\bpoint$-points $p'$ and $p''$, where $p'$ is an endpoint of $\zg_1$ and $p''$ is an endpoint of $\zg_2$. Without loss of generality, we assume that $p'$ is that which gives rise to $\widehat{\mathfrak{p}}$, which is denoted by $\widehat{p}$. When $p$ is a puncture, it induces only one boundary $\bpoint$-point $\widehat{p}$, which is a common endpoint of $\zg_1$ and $\zg_2$. Assume that $\mathfrak{p}$ is from $\za$ to $\zb$, and $\widehat{\mathfrak{p}}$ is from $\widehat{\za}$ to $\widehat{\zb}$. For both cases, $\zg$ is not in between $\za$ and $\zb$, otherwise, there is no oriented intersection $\widehat{\mathfrak{p}}$ induced by $\mathfrak{p}$. Then the pictures in Figure \ref{fig:ind1} illustrate that $\omega(\widehat{\mathfrak{p}})=\omega(\mathfrak{p}).$ Note that these pictures are enough to explain things, where $\za$ and $\zb$ may coincide with $\zg$, and we treat the endpoint of a loop as two distinguished points. 
\begin{figure} 
\begin{tikzpicture}[>=stealth,scale=0.6]
					
					\draw [line width=3pt, gray!80] (-5.8,-0.1) to (5.8,-0.1);
					\draw [line width=1.5pt ] (-5.8,0) to (5.8,0);
					\draw [line width=1pt,red ] (-2,0) to (-4,2);
					\draw [line width=1pt,red ] (-4,2) to (-4,4);
					\draw [line width=1pt,red,dashed ] (-2,5.5) to (-4,4);
					\draw [line width=1pt,red ] (2,0) to (4,2);
					\draw [line width=1pt,red ] (4,2) to (4,4);
					\draw [line width=1pt,red, dashed ] (2,5.5) to (4,4);
					\draw [line width=1pt,red ] (2,5.5) to (-2,5.5);
                    
					\draw [line width=1pt ] (0,0) to (0,6.5);				
					\draw [bend right, line width=1pt] (0,0) to (-5.5,3);
					\draw [bend left, line width=1pt] (0,0) to (5.5,3);
					\draw [bend left, line width=1pt,->] (-0.85,1.2) to (0,1.5);
					
					\draw[thick, black ,fill=white] (0,0) circle (0.1);
					\draw[thick, red ,fill=red] (-2,0) circle (0.1);
					\draw[thick, red ,fill=red] (-4,2) circle (0.1);
					\draw[thick, red ,fill=red] (-4,4) circle (0.1);
					\draw[thick, red ,fill=red] (-2,5.5) circle (0.1);
					\draw[thick, red ,fill=red] (2,0) circle (0.1);
					\draw[thick, red ,fill=red] (4,2) circle (0.1);
					\draw[thick, red ,fill=red] (4,4) circle (0.1);
					\draw[thick, red ,fill=red] (2,5.5) circle (0.1);

					\draw (-3,3) node {$\alpha$};
					\draw (0.5,4) node {$\beta$};
					\draw (4.8,3.3) node {$\gamma$};
					\draw (-0.5,1.9) node {$\mathfrak{p}$};
					\draw (2,3.5) node[red] {$\bbp$};
					\draw (0,-0.6) node {$p$};
					
				\end{tikzpicture}
				\begin{tikzpicture}[>=stealth,scale=0.6]
					
					\draw [line width=3pt, gray!80] (-5.8,-0.1) to (5.8,-0.1);
					\draw [line width=1.5pt ] (-5.8,0) to (5.8,0);
					\draw [line width=1pt ] (0,0) to (0,6.5);
					\draw [line width=1pt,red ] (-2,0) to (-4,2);
					\draw [line width=1pt,red ] (-4,2) to (-4,4);
					\draw [line width=1pt,red,dashed ] (-2,5.5) to (-4,4);

					\draw [line width=1pt,red ] (4,4) to (4,0);
					\draw [line width=1pt,red, dashed ] (2,5.5) to (4,4);
					\draw [line width=1pt,red ] (2,5.5) to (-2,5.5);
					
					\draw [bend right, line width=1pt] (0,0) to (-5.5,3);
					\draw [bend left, line width=1pt] (0,0) to (5.5,3);
					\draw [bend left, line width=1pt,->] (-0.85,1.2) to (0,1.5);
					
					\draw[thick, black, fill=white ] (0,0) circle (0.1);
					\draw[thick, red ,fill=red] (-2,0) circle (0.1);
					\draw[thick, red ,fill=red] (-4,2) circle (0.1);
					\draw[thick, red ,fill=red] (-4,4) circle (0.1);
					\draw[thick, red ,fill=red] (-2,5.5) circle (0.1);
					\draw[thick, red ,fill=red] (4,0) circle (0.1);
					\draw[thick, red ,fill=red] (4,4) circle (0.1);
					\draw[thick, red ,fill=red] (2,5.5) circle (0.1);

					\draw (-3,3.2) node {$\widehat{\alpha}$};
					\draw (0.5,4) node {$\widehat{\beta}$};
					\draw (4.8,3.3) node {$\gamma_{1}$};
					\draw (-0.5,1.9) node {$\widehat{\mathfrak{p}}$};
					\draw (2,3.5) node[red] {$\mathrm{\bbp_{\gamma}}$};
					\draw (0,-0.6) node {$\widehat{p}$};
					\draw (4,-0.6) node[red] {$q'$};
					\draw (5,-0.6) node {$\gamma'$};
					
				\end{tikzpicture}		
				\begin{tikzpicture}[>=stealth,scale=0.5]

					\draw [line width=1pt ] (-5.8,0) to (5.8,0);
					\draw [line width=1pt ] (0,0) to (0,-5);
					\draw [line width=1pt,red,dashed ] (-4,3) to (4,3);
					\draw [line width=1pt,red ] (-4,3) to (-4,-1.5);
					\draw [line width=1pt,red ] (4,3) to (4,-1.5);
					\draw [line width=1pt,red,dashed ] (4,-1.5) to (2,-4);
					\draw [line width=1pt,red,dashed ] (-4,-1.5) to (-2,-4);
					\draw [line width=1pt,red ] (2,-4) to (-2,-4);

					\draw [bend left, line width=1pt] (-1,0) to (0,0.6)[bend left, line width=1pt,->](0,0.6)to(1,0);
					
					\draw[thick, black ,fill=white] (0,0) circle (0.1);
					\draw[thick, red ,fill=red] (-4,3) circle (0.1);
					\draw[thick, red ,fill=red] (-4,-1.5) circle (0.1);
					\draw[thick, red ,fill=red] (2,-4) circle (0.1);
					\draw[thick, red ,fill=red] (-2,-4) circle (0.1);
					\draw[thick, red ,fill=red] (4,3) circle (0.1);
					\draw[thick, red ,fill=red] (4,-1.5) circle (0.1);
	
					\draw (-2.5,-0.5) node {$\alpha$};
					\draw (2.5,-0.5) node {$\beta$};
					\draw (0.5,-3) node {$\gamma$};
					\draw (0,1) node {$\mathfrak{p}$};
					\draw (0.5,-0.5) node {$p$};
					\draw[red] (0,2) node {$\bbp$};
					\draw (-7,2) node {};
				\end{tikzpicture}
				\begin{tikzpicture}[>=stealth,scale=0.5]	
					\draw [line width=3pt, gray!80] (-7,-0.1) to (7,-0.1);
					\draw [line width=1.5pt ] (-7,0) to (7,0);
					\draw [line width=1pt,red ] (-2,0) to (-4.5,3);
					\draw [line width=1pt,red,dashed  ] (-4.5,3) to (-4.5,5.5);
					\draw [line width=1pt,red] (-4.5,5.5) to (-2,8);
					\draw [line width=1pt,red ] (2,0) to (4.5,3);
					\draw [line width=1pt,red,dashed  ] (4.5,3) to (4.5,5.5);
					\draw [line width=1pt,red] (4.5,5.5) to (2,8);
					\draw [line width=1pt,red,dashed] (2,8) to (-2,8);
					
					\draw [bend right, line width=1pt] (0,0) to (-5.5,2);
					\draw [bend left, line width=1pt] (0,0) to (5.5,2);
					\draw [bend right, line width=1pt] (0,0) to (-5.5,7.5);
					\draw [bend left, line width=1pt] (0,0) to (5.5,7.5);
					\draw [bend left, line width=1pt,->] (-0.8,3) to (0.8,3);
					
					\draw[thick, black ,fill=white] (0,0) circle (0.1);
					\draw[thick, red ,fill=red] (-2,0) circle (0.1);
					\draw[thick, red ,fill=red] (-4.5,3) circle (0.1);
					\draw[thick, red ,fill=red] (-4.5,5.5) circle (0.1);
					\draw[thick, red ,fill=red] (-2,8) circle (0.1);
					\draw[thick, red ,fill=red] (2,0) circle (0.15);
					\draw[thick, red ,fill=red] (4.5,3) circle (0.1);
					\draw[thick, red ,fill=red] (4.5,5.5) circle (0.1);
					\draw[thick, red ,fill=red] (2,8) circle (0.1);
					
					\draw (-3,5.3) node {$\widehat{\alpha}$};
					\draw (3,5.3) node {$\widehat{\beta}$};
					\draw (4.8,1.5) node {$\gamma_{1}$};
					\draw (-4.8,1.5) node {$\gamma_{2}$};
					\draw (0,3.8) node {$\widehat{\mathfrak{p}}$};
					\draw (0,6.5) node[red] {$\mathrm{\bbp_{\gamma}}$};
					\draw (0,-0.7) node {$\widehat{p}$};
					\draw (2,-0.6) node[red] {$q'$};
					\draw (-2,-0.6) node[red] {$q''$};
					\draw (5,-0.6) node {$\gamma'$};
					\draw (-5,-0.6) node {$\gamma''$};	
                    \draw (-8,2) node {};
				\end{tikzpicture}
	\caption{Three zigzag arcs $\za, \zb$ and $\zg$ share a common endpoint $p$. An oriented intersection $\widehat{\mathfrak{p}} $ from $\widehat{\za}$ to $\widehat{\zb}$ is induced by an oriented intersection $\mathfrak{p} $ from $\za$ to $\zb$. For both cases, $\omega(\mathfrak{p})=\omega(\widehat{\mathfrak{p}})$. The two pictures on the left demonstrate the polygons $\bbp$ on the original surface, and the two pictures on the right demonstrate the induced polygons $\bbp_\zg$ on the cutting surface.} 
	\label{fig:ind1} 
\end{figure}
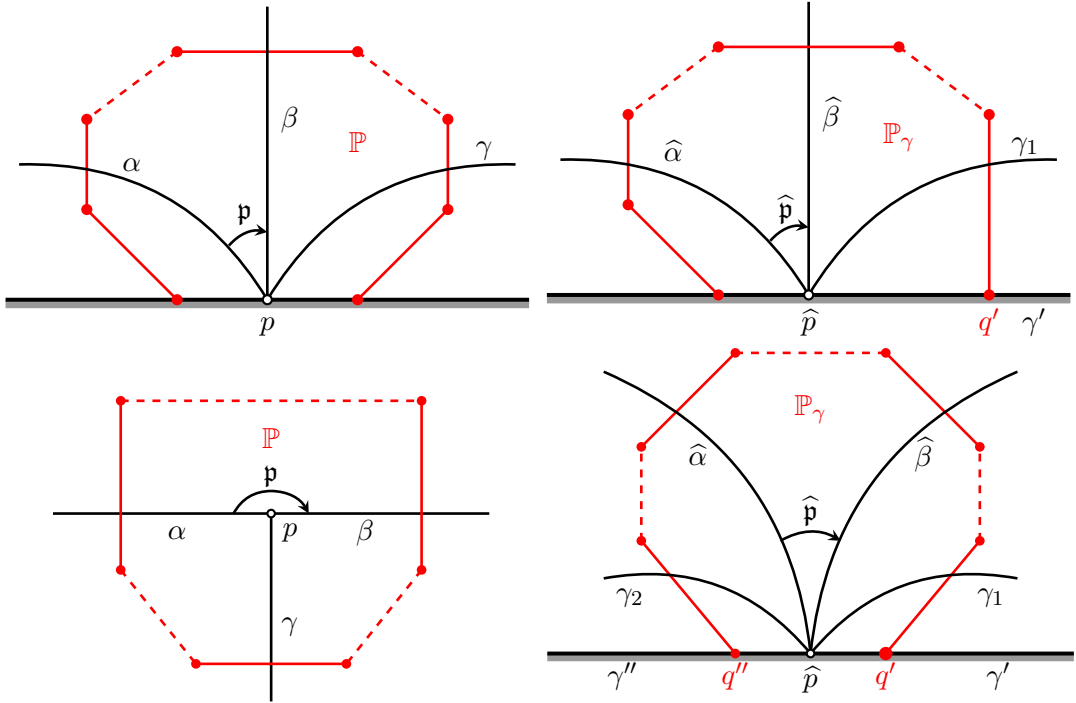

Case II. The $\bpoint$-point $p$ differs from the endpoints of $\zg$. In this case, $p$ induces a unique $\bpoint$-point $\widehat{p}$ on the cutting surface. Furthermore, the polygon $\bbp_\zg$ that contains $\widehat{p}$ is induced from a polygon $\bbp$ that contains $p$, see \cite[Corollary 2.7]{C24}. These polygons are homotopic and there is a one-to-one correspondence between the minimal oriented intersections in $\bbp_\zg$ and $\bbp$. Up to this homotopy, the local configuration of the arcs associated with $\mathfrak{p}$ does not change when cutting the surface. Therefore, we have $\omega(\widehat{\mathfrak{p}})=\omega(\mathfrak{p})$, noticing that the weight of an oriented intersection is the number of minimal oriented intersections of the simple coordinate in between the intersection. 

Conversely, let $\widehat{\calp}$ be an admissible arc system on $(\cals_\zg,\calm_\zg,\zD^*_\zg)$ which contains $\zg_1$ and $\zg_2$. The converse argument as above derives that $\calp$ is an admissible arc system on $(\cals,\calm,\zD^*)$. The only exception that needs to be addressed is the situation in which some oriented intersection $\mathfrak{p}$ in $\calp$ may disappear after cutting the surface. This happens when $p$ is an endpoint of $\zg$ and $\mathfrak{p}: \za\longrightarrow \zb$ factor through $\zg$. More precisely, $\mathfrak{p}=\mathfrak{p}_1\mathfrak{p}_2$ for $\mathfrak{p}_1:\za\longrightarrow \zg$ and $\mathfrak{p}_2:\zg\longrightarrow \zb$, see the left two pictures in Figure \ref{fig:ind2}. In this case, the weight $\omega(\mathfrak{p})$ is different from one, since $\omega(\mathfrak{p})=\omega(\mathfrak{p}_1)+\omega(\mathfrak{p}_2)$, $\omega(\mathfrak{p}_i)\geqslant 0$ and $\omega(\mathfrak{p}_i)=\omega(\widehat{\mathfrak{p}}_i)\neq 1$ for $i=1,2.$  

The statement for the maximal admissible arc systems directly follows from the statement for the admissible arc systems.
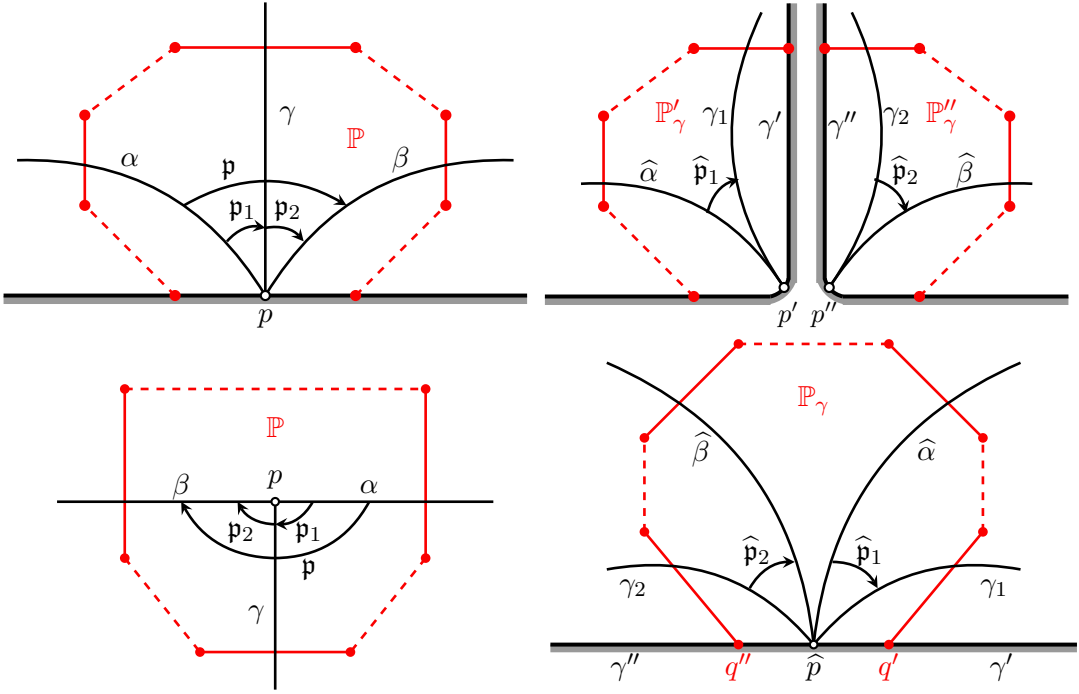
\begin{figure} 
			\begin{tikzpicture}[>=stealth,scale=0.6]
				
				\draw [line width=3pt, gray!80] (-5.8,-0.1) to (5.8,-0.1);
				\draw [line width=1.5pt ] (-5.8,0) to (5.8,0);
				\draw [line width=1pt,red, dashed ] (-2,0) to (-4,2);
				\draw [line width=1pt,red ] (-4,2) to (-4,4);
				\draw [line width=1pt,red,dashed ] (-2,5.5) to (-4,4);
				\draw [line width=1pt,red, dashed ] (2,0) to (4,2);
				\draw [line width=1pt,red ] (4,2) to (4,4);
				\draw [line width=1pt,red, dashed ] (2,5.5) to (4,4);
				\draw [line width=1pt,red ] (2,5.5) to (-2,5.5);
                
				\draw [line width=1pt ] (0,0) to (0,6.5);	
				\draw [bend right, line width=1pt] (0,0) to (-5.5,3);
				\draw [bend left, line width=1pt] (0,0) to (5.5,3);
				\draw [bend left, line width=1pt,->] (-0.85,1.2) to (0,1.5);
				\draw [bend left, line width=1pt,->] (0,1.5) to (0.85,1.2);
				\draw [bend left, line width=1pt,->] (-1.8,2) to (1.8,2);
				
				\draw[thick, black ,fill=white] (0,0) circle (0.1);
				\draw[thick, red ,fill=red] (-2,0) circle (0.1);
				\draw[thick, red ,fill=red] (-4,2) circle (0.1);
				\draw[thick, red ,fill=red] (-4,4) circle (0.1);
				\draw[thick, red ,fill=red] (-2,5.5) circle (0.1);
				\draw[thick, red ,fill=red] (2,0) circle (0.1);
				\draw[thick, red ,fill=red] (4,2) circle (0.1);
				\draw[thick, red ,fill=red] (4,4) circle (0.1);
				\draw[thick, red ,fill=red] (2,5.5) circle (0.1);

				\draw (-3,3) node {$\alpha$};
				\draw (0.5,4) node {$\gamma$};
				\draw (3,3) node {$\beta$};
				\draw (-0.8,2.8) node {$\mathfrak{p}$};
				\draw (-0.5,1.9) node {$\mathfrak{p}_{1}$};
				\draw (0.5,1.9) node {$\mathfrak{p}_{2}$};
				\draw (2,3.5) node[red] {$\bbp$};
				\draw (0,-0.5) node {$p$};
				
			\end{tikzpicture}
			\begin{tikzpicture}[>=stealth,scale=0.6]
				\draw [line width=3pt, gray!80] (-5.8,-0.1) to (-0.8,-0.1);
				\draw [bend right, line width=3pt, gray!80] (-0.9,-0.1) to (-0.3,0.4);
				\draw [line width=3pt, gray!80 ](-0.3,0.3) to (-0.3,6.5);
				\draw [line width=1.5pt ] (-5.8,0) to (-0.8,0);
				\draw[bend right, line width=1.5pt] (-0.8,0) to (-0.4,0.4);
				\draw [line width=1.5pt ](-0.4,0.38) to (-0.4,6.5);
				
				\draw [line width=3pt, gray!80] (5.8,-0.1) to (0.8,-0.1);
				\draw [bend left, line width=3pt, gray!80] (0.9,-0.1) to (0.3,0.4);
				\draw [line width=3pt, gray!80 ](0.3,0.3) to (0.3,6.5);
				\draw [line width=1.5pt ] (5.8,0) to (0.8,0);
				\draw[bend left, line width=1.5pt] (0.8,0) to (0.4,0.4);
				\draw [line width=1.5pt ](0.4,0.38) to (0.4,6.5);

				\draw [line width=1pt,red, dashed ] (-2.5,0) to (-4.5,2);
				\draw [line width=1pt,red ] (-4.5,2) to (-4.5,4);
				\draw [line width=1pt,red,dashed ] (-2.5,5.5) to (-4.5,4);
				\draw [line width=1pt,red, dashed ] (2.5,0) to (4.5,2);
				\draw [line width=1pt,red ] (4.5,2) to (4.5,4);
				\draw [line width=1pt,red, dashed ] (2.5,5.5) to (4.5,4);
				\draw [line width=1pt,red ] (2.5,5.5) to (0.4,5.5);
				\draw [line width=1pt,red ] (-2.5,5.5) to (-0.4,5.5);

				\draw [bend left, line width=1pt] (0.5,0.2) to (5,2.5);
				\draw [bend right, line width=1pt] (0.5,0.2) to (1,6.3);
				\draw [bend right, line width=1pt] (-0.5,0.2) to (-5,2.5);
				\draw [bend left, line width=1pt] (-0.5,0.2) to (-1,6.3);                
			    \draw [bend left, line width=1pt, ->] (-2.2,1.85) to (-1.5,2.6);
				\draw [bend left, line width=1pt, ->](1.5,2.6)  to(2.2,1.85) ;
				
				\draw[thick, black ,fill=white] (0.5,0.2) circle (0.1);
				\draw[thick, black ,fill=white] (-0.5,0.2) circle (0.1);
				\draw[thick, red ,fill=red] (-2.5,0) circle (0.1);
				\draw[thick, red ,fill=red] (-4.5,2) circle (0.1);
				\draw[thick, red ,fill=red] (-4.5,4) circle (0.1);
				\draw[thick, red ,fill=red] (-2.5,5.5) circle (0.1);
				\draw[thick, red ,fill=red] (2.5,0) circle (0.1);
				\draw[thick, red ,fill=red] (4.5,2) circle (0.1);
				\draw[thick, red ,fill=red] (4.5,4) circle (0.1);
				\draw[thick, red ,fill=red] (2.5,5.5) circle (0.1);
				\draw[thick, red ,fill=red] (0.4,5.5) circle (0.1);
				\draw[thick, red ,fill=red] (-0.4,5.5) circle (0.1);

				\draw (-3.5,2.8) node {$\widehat{\alpha}$};
				\draw (3.5,2.8) node {$\widehat{\beta}$};
				\draw (-2,4) node {$\gamma_{1}$};
				\draw[red] (-3,4) node {$\bbp'_\zg$};
				\draw[red] (3,4) node {$\bbp''_\zg$};
				\draw (2,4) node {$\gamma_{2}$};
				\draw (-0.8,3.8) node {$\gamma'$};
				\draw (0.8,3.8) node {$\gamma''$};
				\draw (-2.2,2.8) node {$\widehat{\mathfrak{p}}_{1}$};
				\draw (2.2,2.8) node {$\widehat{\mathfrak{p}}_{2}$};
				\draw (-0.4,-0.4) node {\small$p'$};
				\draw (0.4,-0.4) node {\small$p''$};

			\end{tikzpicture}	
			
			\begin{tikzpicture}[>=stealth,scale=0.5]

				\draw [line width=1pt,red,dashed ] (-4,3) to (4,3);
				\draw [line width=1pt,red ] (-4,3) to (-4,-1.5);
				\draw [line width=1pt,red ] (4,3) to (4,-1.5);
				\draw [line width=1pt,red,dashed ] (4,-1.5) to (2,-4);
				\draw [line width=1pt,red,dashed ] (-4,-1.5) to (-2,-4);
				\draw [line width=1pt,red ] (2,-4) to (-2,-4);
				
				\draw [line width=1pt ] (-5.8,0) to (5.8,0);
				\draw [line width=1pt ] (0,0) to (0,-5);
				\draw [bend left, line width=1pt,->] (1,0) to (0,-0.6);
				\draw [bend left, line width=1pt, ->](0,-0.6)to(-1,0);
				\draw [bend left, line width=1pt] (2.5,0) to (0,-1.5);
				\draw [bend left, line width=1pt, ->](0,-1.5)to(-2.5,0);
				
				\draw[thick, black ,fill=white] (0,0) circle (0.1);
				\draw[thick, red ,fill=red] (-4,3) circle (0.1);
				\draw[thick, red ,fill=red] (-4,-1.5) circle (0.1);
				\draw[thick, red ,fill=red] (2,-4) circle (0.1);
				\draw[thick, red ,fill=red] (-2,-4) circle (0.1);
				\draw[thick, red ,fill=red] (4,3) circle (0.1);
				\draw[thick, red ,fill=red] (4,-1.5) circle (0.1);
				\draw[red] (0,2) node {$\bbp$};			
				\draw (2.5,0.4) node {$\alpha$};
				\draw (-2.5,0.4) node {$\beta$};
				\draw (-0.5,-3) node {$\gamma$};
				\draw (-0.9,-0.8) node {$\mathfrak{p}_{2}$};
				\draw (0.9,-0.8) node {$\mathfrak{p}_{1}$};
				\draw (0,0.6) node {$p$};
				\draw (0.9,-1.8) node {$\mathfrak{p}$};
				\draw (-7.3,-1.8) node {};
				\draw (6.7,-1.8) node {};
			\end{tikzpicture}
			\begin{tikzpicture}[>=stealth,scale=0.5]
				
				\draw [line width=3pt, gray!80] (-7,-0.1) to (7,-0.1);
				\draw [line width=1.5pt ] (-7,0) to (7,0);
				\draw [line width=1pt,red ] (-2,0) to (-4.5,3);
				\draw [line width=1pt,red,dashed  ] (-4.5,3) to (-4.5,5.5);
				\draw [line width=1pt,red] (-4.5,5.5) to (-2,8);
				\draw [line width=1pt,red ] (2,0) to (4.5,3);
				\draw [line width=1pt,red,dashed  ] (4.5,3) to (4.5,5.5);
				\draw [line width=1pt,red] (4.5,5.5) to (2,8);
				\draw [line width=1pt,red,dashed ] (2,8) to (-2,8);
				
				\draw [bend right, line width=1pt] (0,0) to (-5.5,2);
				\draw [bend left, line width=1pt] (0,0) to (5.5,2);
				\draw [bend right, line width=1pt] (0,0) to (-5.5,7.5);
				\draw [bend left, line width=1pt] (0,0) to (5.5,7.5);
				\draw [bend left, line width=1pt,->] (-1.7,1.5) to (-0.5,2.2);
				\draw [bend left, line width=1pt,->] (0.5,2.2) to (1.7,1.5);
				
				\draw[thick, black ,fill=white] (0,0) circle (0.1);
				\draw[thick, red ,fill=red] (-2,0) circle (0.1);
				\draw[thick, red ,fill=red] (-4.5,3) circle (0.1);
				\draw[thick, red ,fill=red] (-4.5,5.5) circle (0.1);
				\draw[thick, red ,fill=red] (-2,8) circle (0.1);
				\draw[thick, red ,fill=red] (2,0) circle (0.1);
				\draw[thick, red ,fill=red] (4.5,3) circle (0.1);
				\draw[thick, red ,fill=red] (4.5,5.5) circle (0.1);
				\draw[thick, red ,fill=red] (2,8) circle (0.1);
				
				\draw (3,5.2) node {$\widehat{\alpha}$};
				\draw (-3,5.2) node {$\widehat{\beta}$};
				\draw (4.8,1.5) node {$\gamma_{1}$};
				\draw (-4.8,1.5) node {$\gamma_{2}$};
				\draw (1.5,2.5) node {$\widehat{\mathfrak{p}}_{1}$};
				\draw (-1.5,2.5) node {$\widehat{\mathfrak{p}}_{2}$};
				\draw (0,6.5) node[red] {$\mathrm{\bbp_{\gamma}}$};
				\draw (0,-0.6) node {$\widehat{p}$};
				\draw (2,-0.6) node[red] {$q'$};
				\draw (-2,-0.6) node[red] {$q''$};
				\draw (5,-0.6) node {$\gamma'$};
				\draw (-5,-0.6) node {$\gamma''$};
			\end{tikzpicture}
	\caption{An oriented intersection $\mathfrak{p}$ from $\za$ to $\zb$ disappears after cutting the surface. This happens when $p$ is an endpoint of $\zg$ and $\mathfrak{p}: \za\longrightarrow \zb$ factor through $\zg$. Then $\omega(\widehat{\mathfrak{p}}_i)\neq 1$ for $i=1,2$ implies $\omega(\mathfrak{p}_i)\neq 1$, and thus $\omega(\mathfrak{p})=\omega(\mathfrak{p}_1)+\omega(\mathfrak{p}_2)\neq 1$.} 
	\label{fig:ind2} 
\end{figure}
\end{proof}

The following lemma is the first application of the above proposition.
\begin{lemma}\label{lem:completion}
Any admissible arc system $\calp$ on a coordinated-marked surface $(\cals,\calm,\zD^*)$ can be completed as an admissible partial triangulation.
\end{lemma}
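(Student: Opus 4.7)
The plan is to first extend $\calp$ to a (possibly non-admissible) partial triangulation $\calt$ by adding simple zigzag arcs, and then apply the flip-reduction procedure from the paragraph after Lemma~\ref{lem:flip} to $\calt$ in such a way that every arc of $\calp$ is preserved.

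For the extension step, I proceed by induction on the number of $\rpoint$-points lying in polygons of the dissection of $\cals$ by $\calp\cup\partial\cals$ that contain more than one $\rpoint$-point. Whenever a polygon $\bbp$ contains two $\rpoint$-points $r_1,r_2$, I claim there is a simple zigzag arc $\zd$ whose interior is confined to $\bbp$ (hence has no interior intersection with any arc of $\calp$) and which separates $r_1$ from $r_2$. Such a $\zd$ can be produced as follows: $\bbp$ must meet $\partial\cals$ because it contains $\rpoint$-points, and some $\rpoint$-arc $\ell^*\in\zD^*$ lies inside $\bbp$ separating $r_1$ from $r_2$. Its anti-twist $t^{-1}(\ell^*)\in\zD_I$ is a zigzag arc by Proposition~\ref{theorem:main arcs and objects}, and after an appropriate isotopy within $\bbp$ it provides the required $\zd$. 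Adding $\zd$ strictly decreases the inductive quantity, so iterating terminates at a partial triangulation $\calt\supseteq\calp$.

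Next I apply the flip procedure to $\calt$. Because $\calp$ is already admissible, every weight-one oriented intersection in $\calt$ must involve at least one arc outside $\calp$, and I consistently choose to flip this non-$\calp$ arc. By Lemma~\ref{lem:flip} the total number of weight-one oriented intersections strictly decreases at each step, so the procedure terminates; by the observation immediately after Lemma~\ref{lem:flip}, the partial triangulation property is preserved throughout. The resulting admissible partial triangulation contains $\calp$, as required.

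The main obstacle is the local existence argument for $\zd$ inside a polygon of complicated topology (with punctures, multiple boundary components, or only internal edges). A cleaner alternative is to invoke Proposition~\ref{prop:red2} and cut $\cals$ along every arc of $\calp$: on the cutting surface the arcs $\calp'=\{\zg^i_1,\zg^i_2\}$ already bound triangles around the newly introduced $\rpoint$-points, so only the remaining region must be further dissected, and this region admits an admissible partial triangulation via its own injective dissection (admissible because $\Ext^1(DA,DA)=0$ combined with Proposition~\ref{prop:main-extensions}).
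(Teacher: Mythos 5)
Your ``cleaner alternative'' is essentially the paper's own proof: the paper uses Proposition~\ref{prop:red2} to reduce to the situation where every arc of $\calp$ becomes boundary-parallel on the cutting surface, takes the union with an auxiliary dissection (the paper uses the projective dissection $\zD_P$; your choice of $\zD_I$ is symmetric and works equally well), observes that boundary-parallel arcs have no interior intersections so the union is a partial triangulation, and then flips. However, as written your alternative stops one step short: knowing that $\zD_I$ is admissible by itself (via $\Ext^1(DA,DA)=0$ and Proposition~\ref{prop:main-extensions}) does not make $\calp'\cup\zD_I$ admissible, since there can be weight-one oriented intersections between a $\calp'$-arc and a $\zD_I$-arc sharing an endpoint. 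You must still run the flip procedure from your second paragraph, always replacing the $\zD_I$-arc, and then lift the result back through Proposition~\ref{prop:red2}. With those two steps restored, the alternative matches the paper.

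Your primary inductive approach, by contrast, has a genuine gap exactly where you suspected. The claim that some $\ell^*\in\zD^*$ ``lies inside $\bbp$'' and separates $r_1$ from $r_2$ is unjustified: $\rpoint$-arcs in $\zD^*$ are free to cross the $\bpoint$-arcs of $\calp$ bounding $\bbp$, so there is no reason any $\ell^*$ is confined to $\bbp$. Even granting this, the anti-twist $t^{-1}(\ell^*)$ rotates both endpoints along $\partial\cals$ and may leave $\bbp$; and since being zigzag is a property of the homotopy class on $(\cals,\calm,\zD^*)$, you cannot ``isotope it into $\bbp$'' without potentially changing which arc (and hence which homotopy class) you are working with. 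In a polygon whose boundary contains several arcs of $\calp$, punctures, or pieces of several boundary components, a direct construction of a separating zigzag arc interior to $\bbp$ is nontrivial; the cutting-surface reduction exists precisely so that one never has to solve this local existence problem.
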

\begin{proof}
We may always assume that each arc $\zg$ in $\calp$ belongs to an external triangle with a unique $\rpoint$-point, that is, $\zg$ is the arc in the triangle of type $\textrm{\textbf{F1}}$ in Figure \ref{fig:ply}. Otherwise, we consider the cutting surface $(\cals_\zg,\calm_\zg,\zD^*_\zg)$ and complete the arc system $\widehat{\calp}=\{\widehat{\za}, \za \in \calp\}\cup \{\zg_1,\zg_2\}$ as an admissible partial triangulation, and then lift it to the original surface by Proposition \ref{prop:red2}. 

Now we show that under above assumption, $\calp$ can be completed as an admissible partial triangulation.
Denote by $\calp'$ the union of $\calp$ and the projective dissection $\zD_P$.
Denote by $\za_1$ and $\za_2$ the arcs in $\calp$ and $\zD_P$, respectively. Since $\za_1$ is isotopic to a boundary segment that contains only one $\rpoint$-point, there is no interior intersection between $\za_1$ and $\za_2$. Furthermore, since the projective dissection is a partial triangulation, $\calp'$ is also a partial triangulation. But $\calp'$ may not be admissible, that is, there may exist an oriented intersection of two arcs whose weight equals one. In this case, one of the two arcs belongs to $\calp$, and the other belongs to $\zD_P$, since both $\calp$ and $\zD_P$ are themselves admissible. 
Then by using an argument similar to the paragraph above Proposition-Definition \ref{prop-def:flip}, we get an admissible partial triangulation $\calp'_{\textbf{adm}}$, where when we replace the arc by the flip of the two arcs, we replace the one in $\zD_P$, such that the final admissible partial triangulation $\calp'_{\textbf{adm}}$ still contains the original arc system $\calp$.
\end{proof}

\subsection{Maximal rigid modules as admissible $5$-partial triangulations}\label{section:max-rig-mod3}

In this subsection, we characterize the maximal admissible arc systems and show that they are exactly the $5$-partial triangulations. We start with some basic lemmas. 
\begin{lemma}\label{lem:disk1}
Assume that $(\cals,\calm)$ is a disk with $m\geqslant 5$ $\bpoint/\rpoint$-marked points on the boundary. 
Let $\zG=\{\zg^1,\cdots,\zg^{m-1}\}$ be an arc system such that each $\zg^i$ forms an external triangle which contains one $\rpoint$-point. If $\zG$ is admissible with respect to a simple coordinate $\zD^*$, then $\zG$ is not a maximal admissible arc system concerning $\zD^*$.
\end{lemma}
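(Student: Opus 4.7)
The plan is to exhibit a candidate admissible zigzag $\bpoint$-arc $\zb$ inside the residual internal polygon of $\zG$, add it to $\zG$, and so obtain a strictly larger admissible arc system, contradicting maximality.

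First I would set up the combinatorial picture. Each $\zg^i$ being an $\bpoint$-arc that bounds an external triangle with exactly one $\rpoint$-point forces $\zg^i$ to be a short arc between two $\bpoint$-points adjacent on the boundary. An Euler-characteristic count then shows that the $m-1$ arcs in $\zG$ partition the disk into the $m-1$ ear triangles plus exactly one residual interior polygon $\bbp$, whose boundary carries at least $m+1 \geqslant 6$ sides: all the boundary $\bpoint$-points (as consecutive vertices) together with the single $\rpoint$-point $r^\ast$ not contained in any ear. In particular, $\bbp$ is strictly larger than a pentagon and has at least five consecutive $\bpoint$-vertices, so there is geometric room for a $\bpoint$-diagonal that does not cut off an internal triangle.

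Next I would exhibit a concrete zigzag arc $\zb$ with interior inside $\bbp$ and endpoints at two non-adjacent $\bpoint$-vertices of $\bbp$. A natural source is the injective dissection $\zD_I$ (or the projective dissection $\zD_P$) attached to $\zD^\ast$: by Proposition \ref{theorem:main arcs and objects} each of its arcs is a zigzag arc, and it is itself an admissible partial triangulation of the disk. Because $\zG$ contains the polygon $\bbp$ of size $\geqslant 6$ while $\zD_I$ (being a dissection whose polygons each contain exactly one $\rpoint$-point) cannot share the same polygon structure when $m \geqslant 5$, at least one arc of $\zD_I$ must pass through $\bbp$; after possibly restricting to the portion lying inside $\bbp$, this provides the desired zigzag arc $\zb$.

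The main obstacle is verifying admissibility of $\zG \cup \{\zb\}$: the only oriented intersections between $\zb$ and a $\zg^i$ occur at shared $\bpoint$-endpoints, and a priori some of them could have weight $1$. I would handle this exactly as in the proof of Lemma \ref{lem:completion}, using the flip operation of Definition \ref{def:flip}: whenever a weight-one oriented intersection appears at a common endpoint $p$, flip $\zb$ with the offending $\zg^i$ at $p$, always modifying $\zb$ (never any $\zg^i$) so that every arc of $\zG$ is preserved and the flipping stays inside $\bbp$. By the finiteness argument in Proposition-Definition \ref{prop-def:flip} this terminates at an admissible $\zb''$ such that $\zG \cup \{\zb''\}$ is admissible and has strictly more arcs than $\zG$, contradicting the supposed maximality of $\zG$.
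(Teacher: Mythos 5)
Your high-level plan matches the paper's (produce a new admissible zigzag arc inside the residual polygon $\bbp$, after flipping if needed), but there are two genuine gaps that the paper's proof takes care to close and yours does not.

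First, the existence step is not sound as stated. You propose to take an arc of $\zD_I$ and, if necessary, ``restrict it to the portion lying inside $\bbp$.'' That operation is not well-defined: an arc of $\zD_I$ may cross some $\zg^i$ transversally, and the piece of it inside $\bbp$ then has an endpoint in the interior of $\zg^i$, which is not a $\bpoint$-point, so the truncated curve is not a $\bpoint$-arc, let alone a zigzag arc. Moreover ``at least one arc of $\zD_I$ must pass through $\bbp$'' conflates ``passes through'' with ``lies entirely inside with both endpoints among the $\bpoint$-vertices of $\bbp$,'' and only the latter is usable; nothing in your argument guarantees it. The paper sidesteps this entirely by choosing a \emph{specific} canonical candidate, the arc $\za_1$ cutting off the one $\rpoint$-point not bounded by any $\zg^i$ (the ``missing ear''), which is automatically a simple zigzag arc lying inside $\bbp$ with no interior intersections with $\zG$.

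Second, the assertion that the flip process terminates at a $\zb''$ with ``strictly more arcs than $\zG$'' is exactly the point where $m\geqslant 5$ must enter, and you never use it. Each flip replaces an endpoint of the candidate arc by the other endpoint of the adjacent $\zg^i$; after at most two flips (one at each endpoint, since the new endpoints only carry weight-$0$ intersections by Lemma \ref{lem:flip}) the candidate may end up homotopic to some $\zg^j$, in which case $\zG\cup\{\zb''\}=\zG$ and no contradiction is obtained. This is precisely what happens for $m=4$ in the paper's fourth case, where the doubly flipped arc $\za_4$ would coincide with $\zg^2$; the inequality $m\geqslant 5$ is used only to rule this out. A correct proof must perform this distinctness check. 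The paper's finite case analysis on whether the polygons $\bbp_1,\bbp_2$ of $\zD^*$ at the two endpoints are triangles makes the flips explicit (at most one per endpoint) and lets one verify distinctness directly; your unbounded ``flip until admissible'' loop obscures where the termination output lands.
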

\begin{proof}
We clockwise labeled the arcs $\zg^i$ on the disk, see the picture in Figure \ref{fig:5}. Let $\za_1$ be the (unique) $\bpoint$-arc that forms a $m$ -gon with the arcs $\zg^1,\cdots,\zg^{m-1}$. Then $\za_1$ forms an external triangle which contains one $\rpoint$-point. Therefore, $\za_1$ factors through a fan of $\rpoint$-arcs in $\zD^*$, and it is a zigzag arc for $\zD^*$. Denote by $p_1$ and $p_2$ the endpoints of $\za_1$, and by $\bbp_1$ and $\bbp_2$ the polygons of $\zD^*$ which contain $p_1$ and $p_2$ respectively. 

\begin{figure}
		{
\begin{tikzpicture}[>=stealth,scale=.7]
				\draw[line width=1.5pt]  (0,0) ellipse[x radius=7, y radius=4];
	
				\draw[cyan,line width=1pt,red] (-3,3.6) to[out=-60,in=60] (-3.8,-3.35) ;
				\draw[cyan,line width=1pt,red,dashed ] (-3,3.6) to[out=-30,in=100] (-1,0) ;
				\draw[cyan,line width=1pt,red,dashed ] (3,3.6) to[out=-150,in=80] (1,0) ;
				\draw[line width=1pt,red ] (-1,0) to (0,-4) to (1,0) ;
				\draw[cyan,line width=1pt,red ] (3,3.6) to[out=-100,in=130] (3.8,-3.35) ;
			     
				\draw[cyan,line width=1pt,blue ] (-2,-3.85) to[out=30,in=150] (2,-3.85) ;
				\draw[cyan,line width=1pt,blue ] (-5.5,-2.5) to[out=20,in=140] (2,-3.85) ;
				\draw[cyan,line width=1pt,blue ] (-5.5,-2.5) to[out=30,in=150] (5.5,-2.5) ;
				\draw[cyan,line width=1pt,blue ] (-2,-3.85) to[out=40,in=160] (5.5,-2.5) ;

				\draw[cyan,line width=1pt,black ] (-6.75,1) to[out=-30,in=30] (-6.75,-1) ;
				\draw[cyan,line width=1pt,black ] (-6.75,-1) to[out=0,in=80] (-5.5,-2.5) ;
				\draw[cyan,line width=1pt,black ] (-5.5,-2.5) to[out=10,in=110] (-2,-3.85) ;
				\draw[cyan,line width=1pt,black ] (6.75,1) to[out=-150,in=150] (6.75,-1) ;
				\draw[cyan,line width=1pt,black ] (6.75,-1) to[out=180,in=100] (5.5,-2.5) ;
				\draw[cyan,line width=1pt,black ] (5.5,-2.5) to[out=170,in=70] (2,-3.85) ;

				\draw[thick,fill=white] (-5.5,-2.5) circle (0.1);
				\draw[thick,fill=white] (5.5,-2.5)  circle (0.1);
				\draw[thick,fill=white] (-6.75,1) circle (0.1);
				\draw[thick,fill=white] (6.75,1)  circle (0.1);
				\draw[thick,fill=white] (-6.75,-1) circle (0.1);
				\draw[thick,fill=white] (6.75,-1)  circle (0.1);
				\draw[thick,fill=white] (-2,-3.85) circle (0.1);
				\draw[thick,fill=white] (2,-3.85)  circle (0.1);

				\draw[thick,red, fill=red] (0,-4) circle (0.1);
				\draw[thick,red,fill=red] (-3,3.6) circle (0.1);	
				\draw[thick,red,fill=red] (3,3.6) circle (0.1);	
				\draw[thick,red,fill=red] (-3.8,-3.35) circle (0.1);	
				\draw[thick,red,fill=red] (3.8,-3.35) circle (0.1);	
                \draw[red] (0,0) node {$\cdots$};		
				\draw (-1.8,0) node[red] {$\bbp_1$};
				\draw (1.8,0) node[red] {$\bbp_2$};
				\draw (-1.9,-4.3) node {$p_1$};
				\draw (1.9,-4.3) node {$p_2$};
				\draw (-2.4,-2.6) node {$\gamma^{1}$};
				\draw (-5.4,-1.4) node {$\gamma^{2}$};
				\draw (-5.8,0) node {$\gamma^{3}$};
				\draw (2.2,-2.7) node {$\gamma^{m-1}$};
				\draw (5.3,-1.3) node {$\gamma^{m-2}$};
				\draw (5.5,0) node {$\gamma^{m-3}$};
	            \draw (0.6,-3.6) node[blue]{$\alpha_{1}$};
	            \draw (-1.25,-2.9) node[blue]{$\alpha_{2}$};
	            \draw (1.2,-2.9) node[blue]{$\alpha_{3}$};
	            \draw (0,-1.2) node[blue]{$\alpha_{4}$};
	            
		\end{tikzpicture}
	}
	\caption{For an admissible arc system $\zG=\{\zg^1,\cdots,\zg^{m-1}\}$ on a disk with $m$ $\rpoint$-points, where each $\zg^i$ forms an external triangle, it is always possible to add an extra arc $\za_i$ to get a larger admissible arc system. The choice of $\za_i$ depends on whether the polygons $\bbp_1$ and $\bbp_2$ are triangles or not.}\label{fig:5}
	\end{figure}
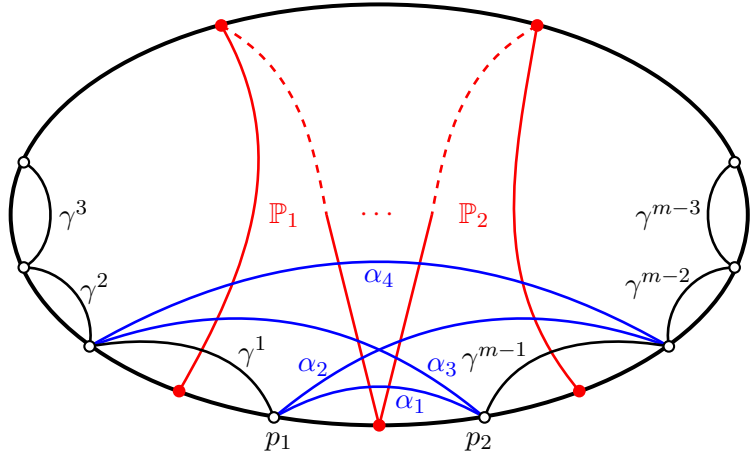

We have several cases depending on whether $\bbp_1$ and $\bbp_2$ are triangles.
If both $\bbp_1$ and $\bbp_2$ are not triangles, then the weights of the oriented intersections between $\za_1$ and the arcs $\zg^1$ and $\zg^{m-1}$ arising from $p_1$ and $p_2$ are different from one. On the other hand, there are no other intersections between $\za_1$ and the arcs $\zg^i, 1\leqslant i \leqslant m-1$. Therefore, $\zG\cup\{\za_1\}$ is still an admissible arc system and thus $\zG$ is not maximal. If $\bbp_1$ is not a triangle but $\bbp_2$ is a triangle, then we consider an arc $\za_2$ which is the flip of $\za_1$ and $\zg^{m-1}$ at $p_2$. Then, according to Lemma \ref{lem:flip}, $\za_2$ is a zigzag arc, and $\zG\cup\{\za_2\}$ is still an admissible arc system. Thus, $\zG$ is not maximal. 
For the other two cases, the proof is similar. 

Note that when both $\bbp_1$ and $\bbp_2$ are triangles, we consider the arc $\za_4$ in the picture, which is obtained by smoothing $\za_1$ with $\zg^1$ and $\zg^{m-1}$ at $p_1$ and $p_2$ respectively. Since $m\geqslant 5$, $\za_4$ is different from any arc in $\zG$, therefore $\zG\cup\{\za_4\}\neq \zG$, and $\zG$ is not maximal. 
\end{proof}

\begin{proposition}\label{lem:ply2}
Let $(\cals,\calm,\zD^*)$ be a coordinated-marked surface. Then each $m$-gon with $m \geqslant 6$ in a maximal admissible arc system is an internal polygon, that is, it does not contain a $\rpoint$-point. 
\end{proposition}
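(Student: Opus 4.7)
The plan is to argue by contradiction, reducing to the disk case via the cutting-surface technique (Proposition \ref{prop:red2}) and invoking Lemma \ref{lem:disk1}.

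Suppose for contradiction that some $m$-gon $\bbp$ with $m \geqslant 6$ in a maximal admissible arc system $\calp$ contains an $\rpoint$-point. Since $\calp$ is a partial triangulation, $\bbp$ contains at most one, hence exactly one, such $\rpoint$-vertex. I would first pin down the combinatorial structure of $\bbp$. Because $\rpoint$-vertices of $\bbp$ occur only between two consecutive external edges (internal arcs have $\bpoint$-endpoints), and since $\bpoint$- and $\rpoint$-points strictly alternate on each boundary component of $\cals$, a maximal chain of $\ell$ consecutive external edges of $\bbp$ is bordered by arcs at both ends and therefore contributes exactly $\ell/2$ $\rpoint$-vertices. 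The assumption of a unique $\rpoint$-vertex thus forces a single external chain of length two, so $\bbp$ has $k = m - 2 \geqslant 4$ internal edges $\zg^1,\ldots,\zg^k$ (arcs of $\calp$) and $2$ external edges meeting at the $\rpoint$-vertex.

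Next I would pass to the cutting surface $(\cals_\zG,\calm_\zG,\zD^*_\zG)$ obtained by cutting along $\zG = \{\zg^1,\ldots,\zg^k\}$. Since $\zG$ is exactly the set of arcs bounding $\bbp$, the cuts isolate $\bbp$ as a connected component $D$ of the cutting surface, and $D$ is a disk whose boundary consists of the two original external edges of $\bbp$ (carrying the unique original $\rpoint$-point) together with $k$ newly added boundary segments (each carrying one new $\rpoint$-point from the cutting). Hence $D$ has exactly $k + 1 = m - 1 \geqslant 5$ $\rpoint$-points on its boundary. By Proposition \ref{prop:red2}, $\calp$ corresponds to a maximal admissible arc system on the cutting surface containing $\zG' = \{\zg^i_1,\zg^i_2\}$; its restriction to $D$ is the subset $\zG''$ consisting of the copy of each $\zg^i$ lying on the $\bbp$-side of the cut. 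By the construction of the cutting surface recalled in Section 3.2, each such arc forms an external triangle in $D$ containing exactly one of the new $\rpoint$-points.

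The crucial observation is that the maximality of $\calp$ forces $\zG''$ to be a maximal admissible arc system on the disk $D$: any strict enlargement of $\zG''$ on $D$ could be combined with the restrictions of the induced arc system to the other components of the cutting surface to give a strict enlargement there, which via the bijection in Proposition \ref{prop:red2} pulls back to a strict enlargement of $\calp$. But $D$ is a disk with $m - 1 \geqslant 5$ $\rpoint$-points and $\zG''$ consists of exactly $(m-1) - 1$ arcs each forming an external triangle with one $\rpoint$-point, so Lemma \ref{lem:disk1} produces a larger admissible arc system on $D$ — the desired contradiction. The main subtlety I expect lies in the combinatorial step that pins down $k = m-2$, together with checking that the cutting construction behaves correctly in the possibly degenerate cases (for example when endpoints of the $\zg^i$ coincide, such as at a puncture); once these are in place, the conclusion is immediate from Proposition \ref{prop:red2} and Lemma \ref{lem:disk1}.
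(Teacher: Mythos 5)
Your proposal is correct and follows essentially the same strategy as the paper's proof: reduce to the disk component obtained by cutting along the $m-2$ internal edges of $\bbp$ via Proposition \ref{prop:red2}, and then apply Lemma \ref{lem:disk1}. You supply a bit more justification than the paper for two steps it leaves implicit — that an external $m$-gon has exactly two external edges (via the alternation of $\bpoint/\rpoint$-points) and that maximality of $\calp$ passes to the restriction of the induced system to the disk component — but these are exactly the facts the paper is using.
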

\begin{proof}
Suppose that there is an external $m$-gon $\bbp$ in a maximal admissible arc system $\calp$ with $m\geqslant 6$. Then there is a $\rpoint$-point in $\bbp$. We denote $\zG=\{\zg^i, 1\leqslant i \leqslant m-2\}$ the edges of $\bbp$ which are the arcs in $\calp$, and by $\mathsf{b}$ and $\mathsf{b}'$ the extra edges of $\bbp$ which are boundary segments. Then on the cutting surface $(\cals_\zG,\calm_\zG,\zD^*_\zG)$, there is a connected component which contains $\mathsf{b}$ and $\mathsf{b}'$. Denote this component by $(\cals',\calm',\zD'^*)$, where $\cals'$ is a disk and there are $(m-1)$ $\rpoint$-points in $\calm'$.
By Proposition \ref{prop:red2}, $\zG$ induces a maximal admissible arc system on $(\cals',\calm',\zD'^*)$, which we denote by $\zG'=\{\zg^i_1, 1\leqslant i \leqslant m-2\}$. Note that each $\zg^i_1$ forms an external triangle that contains one $\rpoint$ point. On the other hand, the number of $\rpoint$-points on the boundary of the disk $\cals'$ is equal to $m-1$, which is not smaller than $5$. Therefore, the conditions in Lemma \ref{lem:disk1} are met, and there is an extra arc $\zg'$ such that $\zG'\cup \{\zg'\}$ is an admissible arc system on $(\cals',\calm',\zD'^*)$. Denote by $\zg$ the lift of $\zg'$ on $(\cals,\calm,\zD^*)$. Then, by Proposition \ref{prop:red2}, $\calp\cup \{\zg\}$ is an admissible arc system on $(\cals,\calm,\zD^*)$, which contradicts the assumption that $\calp$ is maximal.
\end{proof}

\begin{lemma}\label{lem:disk2}
Assume that $(\cals,\calm)$ is a disk with $m\geqslant 6$ $\bpoint/\rpoint$-marked points on the boundary. 
Let $\zG=\{\zg^1,\cdots,\zg^{m}\}$ be a set of $\bpoint$-arcs such that each $\zg^i$ forms an external triangle that contains one $\rpoint$-point. If $\zG$ is an admissible arc system for a simple coordinate $\zD^*$, then $\zG$ is not a maximal admissible arc system for $\zD^*$.
\end{lemma}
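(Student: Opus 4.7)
\medskip

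\noindent\textbf{Proof plan.} The plan is to reduce Lemma~\ref{lem:disk2} to Lemma~\ref{lem:disk1} by removing a carefully chosen arc from $\zG$; the extra arc produced by the proof of Lemma~\ref{lem:disk1} will provide a strictly larger admissible arc system on the original surface, contradicting maximality of $\zG$.

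The key preparatory step is a combinatorial identity. Since the disk has $m$ $\rpoint$-points and $m$ $\bpoint$-points alternating on the boundary, with no punctures, Euler's formula forces $\zD^*$ to consist of exactly $n=m-1$ $\rpoint$-arcs. Labeling the $m$ polygons of $\zD^*$ by their unique $\bpoint$-corners $q_1,\dots,q_m$, and letting $k_j$ denote the number of $\rpoint$-arcs on the boundary of $\bbp_j$ (so that $\bbp_j$ is a $(k_j+2)$-gon), double-counting arc--polygon incidences yields $\sum_{j=1}^{m} k_j = 2n = 2(m-1)$. Were every $k_j\geq 2$, we would have $\sum_j k_j \geq 2m > 2(m-1)$, a contradiction. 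Hence some polygon $\bbp_{j_0}$ is a triangle.

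Next, I would pick an arc $\zg^k\in\zG$ having $q_{j_0}$ as one of its endpoints, and set $\zG':=\zG\setminus\{\zg^k\}$. The reduced system $\zG'$ consists of $m-1$ simple zigzag arcs each cutting off an external triangle with a distinct $\rpoint$-point, and $\zg^k$ is precisely the unique $\bpoint$-arc completing the internal $m$-gon formed by $\zG'$. Thus $\zG'$ fits the hypotheses of Lemma~\ref{lem:disk1}, with $\zg^k$ playing the role of the central arc $\za_1$. Since the polygon $\bbp_{j_0}$ at the endpoint $q_{j_0}$ of $\zg^k$ is triangular, we avoid Case~(a) of the proof of Lemma~\ref{lem:disk1} (in which the extra arc would coincide with $\za_1=\zg^k$ and provide nothing new); we are instead in Case~(b), (c), or~(d), which produces a new arc $\beta\in\{\za_2,\za_3,\za_4\}$, distinct from $\zg^k$, with $\zG'\cup\{\beta\}$ admissible.

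Finally, I would verify that $\beta$ is also admissible with the restored arc $\zg^k$. Geometrically, $\beta$ lies inside the internal $m$-gon of $\zG$ while $\zg^k$ forms one of its sides, so $\beta$ and $\zg^k$ have no interior intersection; they share at most one endpoint, and at any such shared endpoint Lemma~\ref{lem:flip} identifies the oriented intersection between $\beta$ and $\zg^k$ with one of the zero-weight arrows $\aaa_1,\aaa_2$ from its statement. Therefore $\zG\cup\{\beta\}$ is an admissible arc system strictly containing $\zG$, so $\zG$ is not a maximal admissible arc system. I expect the Euler-based counting step guaranteeing the triangular polygon $\bbp_{j_0}$ to be the main obstacle of the argument; once that structural input is in hand, the reduction to Lemma~\ref{lem:disk1} together with the bookkeeping provided by Lemma~\ref{lem:flip} is essentially mechanical.
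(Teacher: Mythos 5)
Your proof is correct but takes a genuinely different route from the paper's. The paper argues directly, by cases: if every $\rpoint$-arc of $\zD^*$ cuts off a triangle with the boundary, it produces a chord through the $\bpoint$-corner of the big $(m+1)$-gon of $\zD^*$ (using $m\geqslant 6$ so that both resulting sub-polygons have at least four sides); otherwise it takes the twist $t(\ell^*)$ of a non-small arc $\ell^*\in\zD^*$ and checks admissibility by hand. You instead double-count arc--polygon incidences to locate a triangular polygon $\bbp_{j_0}$ of $\zD^*$ (which always exists, since a disk with $m$ marked pairs forces $|\zD^*|=m-1$ while there are $m$ polygons), delete the arc $\zg^k\in\zG$ ending at its $\bpoint$-corner, and run the proof of Lemma~\ref{lem:disk1} on the reduced system: since an endpoint polygon of $\za_1=\zg^k$ is triangular, that proof avoids the case returning $\za_1$ itself and produces one of $\za_2,\za_3,\za_4$, each a diagonal distinct from $\zg^k$ and compatible with it at any shared endpoint by the weight-zero arrows of Lemma~\ref{lem:flip}. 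The reduction is more uniform than the paper's dichotomy, and as a by-product it only uses $m\geqslant 5$ (via Lemma~\ref{lem:disk1}), whereas the paper's Case~I construction genuinely needs $m\geqslant 6$. The one structural caveat, which you flag yourself, is that you rely on the internal case analysis of the \emph{proof} of Lemma~\ref{lem:disk1}, not merely its statement, since the statement alone cannot rule out that the guaranteed extra arc is exactly the removed $\zg^k$; as that lemma appears earlier in the paper this is a legitimate, if slightly unusual, dependence.
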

\begin{proof}

We have two cases. For each case, we will find an extra $\bpoint$-arc $\za$ such that $\zG\cup\{\za\}$ is still an admissible arc system. 

Case I. Each arc in $\zD^*$ forms a triangle with boundary segments. Note that there are $(m-1)$ $\rpoint$-arcs in $\zD^*$. These arcs form a $(m+1)$-gon with two boundary segments. Denote by $p$ the $\bpoint$-point in the polygon. We choose $\za$ to be an $\bpoint$-arc which starts from $p$ and divides the polygon formed by the arcs in $\zG$ into two polygons each of which has at least four edges, noticing that since $m\geqslant 6$, we can always find such an arc, see the picture on the left in Figure \ref{fig:6base1}. Then $\zG\cup\{\za\}$ is still an admissible arc system concerning $\zD^*$, and thus $\zG$ is not maximal.

\begin{figure}
	
\begin{tikzpicture}[>=stealth,scale=0.4]
\draw[line width=1.5pt,fill=white] (0,0) circle (7cm);
				
\draw [blue, line width=1pt] (0,7) to (0,-7);
\draw [bend right, line width=1pt] (0,7) to (4.3,5.5);
\draw [bend left, line width=1pt] (0,7) to (-4.3,5.5);
\draw [bend left, line width=1pt] (0,-7) to (4.3,-5.5);
\draw [bend right, line width=1pt] (0,-7) to (-4.3,-5.5);
\draw [bend right, line width=1pt, dotted] (-4.3,-5.5) to (-4.3,5.5);
\draw [bend right, line width=1pt,dotted] (4.3,5.5) to (4.3,-5.5);
\draw [bend right,red, line width=1pt] (-2.5,6.55) to (2.5,6.55);
\draw [bend left, dashed,red ,  line width=1pt] (-2.5,6.55) to (-7,0);
\draw [bend right, dashed,red ,  line width=1pt] (2.5,6.55) to (7,0);
\draw [bend right, dashed,red ,  line width=1pt] (-2.5,-6.55) to (-7,0);
\draw [bend left, dashed,red ,  line width=1pt] (2.5,-6.55) to (7,0);
				
\draw[thick,fill=white] (0,7) circle (0.15);
\draw[thick,fill=white] (0,-7)  circle (0.15);
\draw[thick,fill=white] (4.3,5.5) circle (0.15);
\draw[thick,fill=white] (-4.3,5.5)  circle (0.15);
\draw[thick,fill=white] (4.3,-5.5) circle (0.15);
\draw[thick,fill=white] (-4.3,-5.5)  circle (0.15);
\draw[thick,red, fill=red] (-7,0)  circle (0.15);
\draw[thick,red, fill=red] (7,0)  circle (0.15);
\draw[thick,red, fill=red] (2.5,6.55)  circle (0.15);
\draw[thick,red, fill=red] (-2.5,-6.55)  circle (0.15);
\draw[thick,red, fill=red] (-2.5,6.55)   circle (0.15);
\draw[thick,red, fill=red] (2.5,-6.55)   circle (0.15);
					
\draw (0.5,0) node[blue] {$\za$};
\draw (0,-7.8) node{$p$};
\end{tikzpicture}
\begin{tikzpicture}[>=stealth,scale=0.4]
\draw[line width=1.5pt,fill=white] (0,0) circle (7cm);
				\draw [blue, line width=1pt] (0,7) to (0,-7);
				\draw [bend right, line width=1pt] (0,7) to (4.3,5.5);
				\draw [bend left, line width=1pt] (0,7) to (-4.3,5.5);
				\draw [bend left, line width=1pt] (0,-7) to (4.3,-5.5);
				\draw [bend right, line width=1pt] (0,-7) to (-4.3,-5.5);
				\draw [red ,  line width=1pt] (3.5,1)  to (-2.45,6.45) to (2.45,-6.45) to (-3.5,-1);
				\draw [red ,dotted,   line width=1pt] (-1,3.5)  to (-0.4,4.2) ;
				\draw [red ,dotted,   line width=1pt] (1,-3.5)  to (0.4,-4.2) ;

			   \draw [bend left, line width=.7pt,->] (0,5.5) to (-0.7,6.2);
				\draw [bend left, line width=.7pt,->] (0.7,6.2) to (0,5.5);
				\draw [bend left, line width=.7pt,->] (-0.7,-6.2) to (0,-5.5);
				\draw [bend left, line width=.7pt,->] (0,-5.5) to (0.7,-6.2);

				\draw[thick,fill=white] (0,7) circle (0.15);
				\draw[thick,fill=white] (0,-7)  circle (0.15);
				\draw[thick,fill=white] (4.3,5.5) circle (0.15);
				\draw[thick,fill=white] (-4.3,5.5)  circle (0.15);
				\draw[thick,fill=white] (4.3,-5.5) circle (0.15);
				\draw[thick,fill=white] (-4.3,-5.5)  circle (0.15);
				\draw[thick,red, fill=red] (2.5,6.55)  circle (0.15);
				\draw[thick,red, fill=red] (-2.5,-6.55)  circle (0.15);
				\draw[thick,red, fill=red] (-2.5,6.55)   circle (0.15);
				\draw[thick,red, fill=red] (2.5,-6.55)   circle (0.15);
\draw (0,7.8) node{$p_{1}$};
\draw (0,-7.8) node{$p_{2}$};
				\draw (-3.5,4.8) node{$\gamma^{1}$};
				\draw (3.5,4.8) node{$\gamma^{2}$};
\draw (-3.5,-4.8) node{$\gamma^{3}$};
\draw (3.5,-4.8) node{$\gamma^{4}$};
\draw (1,5.2) node{$\mathfrak{p}_{2}$};
\draw (-1,5.2) node{$\mathfrak{p}_{1}$};
\draw (1,-5.2) node{$\mathfrak{p}_{4}$};
\draw (-1,-5.2) node{$\mathfrak{p}_{3}$};
\draw (-1.5,1.5) node[red]{$\ell^{*}$};
\draw (1,1.5) node[blue]{$\za$};
\draw (3.5,2) node[red]{$\ell^{*}_{1}$};
\draw (-2,-1.5) node[red]{$\ell^{*}_{2}$};
\end{tikzpicture}
\caption{For an admissible arc system $\zG=\{\zg^1,\cdots,\zg^{m}\}$ on a disk which has $m$ $\rpoint$-points with $m\geqslant 6$, if each $\zg^i$ forms an external triangle, then it is always possible to add an extra arc $\za$ to obtain a larger admissible arc system. For the picture on the left, each $\rpoint$-arc in the coordinate $\zD^*$ is isotopic to a boundary segment that has a $\bpoint$-point, while for the picture on the right, there exists an $\rpoint$-arc $\ell^*$ that is not isotopic to a boundary segment containing a $\bpoint$-point.}
\label{fig:6base1}
\end{figure}

Case II. At least one arc in $\zD^*$ does not form a triangle with boundary segments. We denote this arc by $\ell^*$. Let $\za$ be an $\bpoint$-arc obtained from $\ell^*$ by clockwise rotating both endpoints to the next $\rpoint$-points, see the picture on the right in Figure \ref{fig:6base1}. Then $\za$ is a zigzag arc which gives rise to an indecomposable projective module, and $\za$ differs from any arc in $\zG$. Denote by $p_1$ and $p_2$ the endpoints of $\za$, and we label $\zg^i, 1\leqslant i\leqslant 4$, the arcs in $\zG$ with endpoints $p_1$ and $p_2$, see the picture on the right in Figure \ref{fig:6base1}.
To show that $
\zG\cup\{\za\}$ is admissible concerning $\zD^*$, it only needs to show that the weights of the four oriented intersections $\mathfrak{p}_i, 1\leqslant i \leqslant 4$, respectively between $\za$ and $\zg^i, 1\leqslant i \leqslant 4$, are all different from one. 
It is clear that the weights of $\mathfrak{p}_1$ and $\mathfrak{p}_4$ are zero. On the other hand, since the weight of the oriented intersection $\mathfrak{p}_2\mathfrak{p}_1$ from $\zg^2$ to $\zg^1$ is different from one, the weight of $\mathfrak{p}_2$ is not one. A similar argument shows that the weight of $\mathfrak{p}_3$ is also not one.
Therefore, $\zG\cup\{\za\}$ is still an admissible arc system and $\zG$ is not maximal with respect to $\zD^*$.
\end{proof}

\begin{lemma}\label{lem:ply}
Let $(\cals,\calm,\zD^*)$ be a coordinated-marked surface. Then there is no internal triangle in any admissible arc system. 
\end{lemma}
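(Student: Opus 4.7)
The plan is to derive a direct contradiction using Lemma \ref{lem:sumwt}. Suppose, for contradiction, that $\calp$ is an admissible arc system on $(\cals,\calm,\zD^*)$ containing an internal triangle, i.e., three simple zigzag arcs $\za_1,\za_2,\za_3 \in \calp$ (not boundary segments) which bound a triangular region with no $\rpoint$-point inside. By Lemma \ref{lem:sumwt}, the three oriented intersections $\mathfrak{p}_1,\mathfrak{p}_2,\mathfrak{p}_3$ of this triangle satisfy
\[\omega(\mathfrak{p}_1)+\omega(\mathfrak{p}_2)+\omega(\mathfrak{p}_3)=1.\]

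Now I would invoke the fact that each weight $\omega(\mathfrak{p}_i)$ is a non-negative integer, since by Definition \ref{definition: weight} it counts the number of minimal oriented intersections of $\zD^*$ fitting inside the angle. Therefore the only way for three non-negative integers to sum to $1$ is for exactly one of them to equal $1$ and the other two to vanish. But by Definition \ref{def:par-tri2}, every oriented intersection between two arcs in the admissible arc system $\calp$ must have weight different from $1$. This contradicts $\omega(\mathfrak{p}_i)=1$ for some $i$, completing the proof.

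The main potential obstacle is verifying that the three intersection points at the vertices of the triangle actually give rise to bona fide oriented intersections in the sense needed for Lemma \ref{lem:sumwt} and Definition \ref{def:par-tri2} (as opposed to merely meeting points that could be resolved away). Since the triangle is internal, each vertex is a common endpoint in $\calm_\bpoint \cup \calp_\bpoint$ of two of the three zigzag arcs, and the triangle being contractible with the interior of the surface on one specific side picks out a well-defined clockwise angle at each vertex, which is exactly the data of an oriented intersection. Thus Lemma \ref{lem:sumwt} applies directly, and the weight contradiction closes the argument.
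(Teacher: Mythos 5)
Your proof is correct and follows essentially the same route as the paper: invoke Lemma \ref{lem:sumwt} to get that the three weights sum to one, note that each weight is a non-negative integer, and conclude that exactly one weight equals one, contradicting admissibility. The extra paragraph you add — checking that each vertex of the internal triangle gives a genuine oriented intersection in the sense of Definition \ref{definition: weight} — is a sound sanity check that the paper omits but implicitly relies on.
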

\begin{proof}
Let $\calp$ be an admissible arc system.
Assume that there is an internal triangle formed by arcs in $\calp$. Denote by $\mathfrak{p}_1, \mathfrak{p}_2$, and $\mathfrak{p}_3$ the oriented intersections between the arcs in the triangle. Then, by Lemma \ref{lem:sumwt}, we have $\omega(\mathfrak{p}_1)+\omega(\mathfrak{p}_2)+\omega(\mathfrak{p}_3)=1$. On the other hand, each weight is non-negative. Therefore, there is exactly one oriented intersection whose weight equals one. This contradicts the assumption that $\calp$ is admissible. 
\end{proof}

We have the following description for $5$-partial triangulations, which can be derived directly from the above result.
\begin{corollary}
An admissible $5$-partial triangulation on $(\cals,\calm,\zD^*)$ is formed by polygons of the types $\textrm{\textbf{Fi}}, 1 \leqslant \textrm{\textbf{i}} \leqslant 5,$ in Figure \ref{fig:ply}.
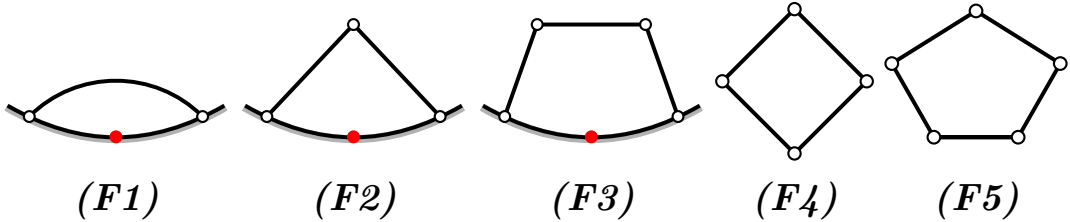
\begin{figure} 
\begin{tikzpicture}[>=stealth,scale=0.36]
\draw [bend right, line width=3pt, gray!60] (-4.05,0.95) to (4.05,0.95);
\draw [bend right, line width=1.5pt] (-4,1) to (4,1);
\draw[cyan,line width=1.5pt,black] (-3.2,0.6) to[out=45,in=135](3.2,0.6);
\draw[thick,fill=white] (-3.2,0.6) circle (0.2);
\draw[thick,fill=white] (3.2,0.6) circle (0.2);
\draw[thick,fill=red, red ] (0,-0.15) circle (0.2);
\draw (0,-2.5) node {\Large \textbf{(F1)} };
\end{tikzpicture}
\begin{tikzpicture}[>=stealth,scale=0.36]
\draw [bend right, line width=3pt, gray!60] (-4.05,0.95) to (4.05,0.95);
\draw [bend right, line width=1.5pt] (-4,1) to (4,1);
\draw[line width=1.5pt,black] (0,4) to(-3.2,0.6);
\draw[line width=1.5pt,black] (0,4) to(3.2,0.6);
\draw[thick,fill=white] (0,4) circle (0.2);
\draw[thick,fill=white] (-3.2,0.6) circle (0.2);
\draw[thick,fill=white] (3.2,0.6) circle (0.2);
\draw[thick,fill=red, red ] (0,-0.15) circle (0.2);
\draw (0,-2.5) node {\Large \textbf{(F2)} };
\end{tikzpicture}
\begin{tikzpicture}[>=stealth,scale=0.36]
\draw [bend right, line width=3pt, gray!60] (-4.05,0.95) to (4.05,0.95);
\draw [bend right, line width=1.5pt] (-4,1) to (4,1);
\draw[cyan,line width=1.5pt,black] (-3.2,0.6) to (-2,4) to (2,4) to(3.2,0.6);
\draw[thick,fill=white] (-2,4) circle (0.2);\draw[thick,fill=white] (2,4) circle (0.2);\draw[thick,fill=white] (-3.2,0.6) circle (0.2);\draw[thick,fill=white] (3.2,0.6) circle (0.2);
\draw[thick,fill=red, red ] (0,-0.15) circle (0.2);
\draw (0,-2.5) node {\Large \textbf{(F3)} };
\end{tikzpicture}
\begin{tikzpicture}[>=stealth,scale=0.48]
\draw [line width=1.5pt] (0,0) to (2,2) to(0,4) to(-2,2)to(0,0) ;
\draw[thick,fill=white] (0,0) circle (0.17);
\draw[thick,fill=white] (0,4) circle (0.17);
\draw[thick,fill=white] (2,2) circle (0.17);
\draw[thick,fill=white] (-2,2) circle (0.17);
\draw (0,-1.3) node {\Large \textbf{(F4)} };
\end{tikzpicture}
\begin{tikzpicture}[>=stealth,scale=0.28]
			
\draw[line width=1.5pt] (-2,0) to (2,0) to (4,3.5)to (0,6)to (-4,3.5)to (-2,0)to (2,0);
\draw[thick,fill=white] (-2,0) circle (0.3);
\draw[thick,fill=white] (2,0) circle (0.3);
\draw[thick,fill=white] (4,3.5) circle (0.3);
\draw[thick,fill=white] (-4,3.5) circle (0.3);
\draw[thick,fill=white] (0,6) circle (0.3);
\draw (0,-3) node {\Large \textbf{(F5)} };
\end{tikzpicture}
\caption{Five kinds of polygons in an admissible 5-partial triangulation, where we label them by $\textbf{Fi}, 1\leqslant \textbf{i} \leqslant 5$.} 
	\label{fig:ply} 
\end{figure}
\end{corollary}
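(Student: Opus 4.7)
My plan is to carry out a straightforward case analysis on the number of $\rpoint$-points contained in a polygon $\bbp$ of $\calp_5$ and on the number of edges of $\bbp$. By Definition~\ref{def:par-tri}, $\bbp$ has at most $5$ edges and contains at most one $\rpoint$-point, so the only cases to consider are ``internal'' (no $\rpoint$-point) and ``external'' (exactly one $\rpoint$-point).

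First I treat the internal case. Here every edge of $\bbp$ is an arc from $\calp_5$. A polygon with a single internal edge would be bounded by either a contractible monogon or a loop, contradicting the requirement that arcs be non-contractible and that the arc system consist of pairwise non-isotopic simple zigzag arcs. A polygon with exactly two internal edges would be a bigon bounded by two distinct arcs sharing both endpoints and enclosing a disk with no marked point, again forcing the two arcs to be homotopic, which is impossible. Finally, Lemma~\ref{lem:ply} rules out an internal triangle. Thus $\bbp$ must have $4$ or $5$ internal edges, matching type $\textbf{F4}$ or $\textbf{F5}$.

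Next I treat the external case, where $\bbp$ contains exactly one $\rpoint$-point $r$. Since $\rpoint$-points lie on boundary components alternating with $\bpoint$-points, $r$ is a vertex of $\bbp$, and the two edges of $\bbp$ incident to $r$ are precisely the two boundary segments on either side of $r$ (hence external); all remaining edges are internal arcs. Writing $k$ for the number of internal edges, the inequality $k+2\leqslant 5$ forces $k\in\{0,1,2,3\}$, where $k=0$ is excluded as a degenerate bigon whose two sides share both endpoints. The three remaining subcases $k=1,2,3$ give total edge counts $3,4,5$, and the shape is forced by the fact that the external edges are exactly the two incident to $r$. These are precisely the types $\textbf{F1}, \textbf{F2}, \textbf{F3}$.

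The proof is essentially bookkeeping once Lemma~\ref{lem:ply} is in hand, so there is no real obstacle. The only delicate points are the topological exclusion of monogons and bigons, which follow from the definition of an arc system of pairwise non-isotopic simple arcs, and the observation that external polygons have exactly two external edges (both adjacent to the unique $\rpoint$-point).
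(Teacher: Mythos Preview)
Your proposal is correct and follows essentially the same route as the paper. The paper does not spell out any argument for this corollary at all: it simply states that the description ``can be derived directly from the above result,'' meaning Lemma~\ref{lem:ply}. Your write-up is exactly the case analysis one would do to unpack that sentence---split into internal versus external polygons, invoke Lemma~\ref{lem:ply} to exclude internal triangles, and count edges---so there is nothing materially different between your approach and the paper's intended one; you have just filled in the bookkeeping the author left implicit.
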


We call two $5$-partial triangulations equivalent if the only possible different polygons of them are external $5$-gons, that is, the polygons of type $\textbf{F3}$ in Figure \ref{fig:ply}. More precisely, if a $5$-partial triangulation $\calp$ can be obtained from another $5$-partial triangulation $\calp'$ by adding an arc in an external $5$-gon of type $\textbf{F3}$ to get an internal $4$-gon of type $\textbf{F4}$ and an external triangle of type $\textbf{F1}$, or vise vars, then we call $\calp$ and $\calp'$ are equivalent. 
Denote by $\{[\calp_5]\}$ the set of the equivalent classes of admissible $5$-partial triangulations. 

\begin{lemma}\label{lem:5partmax}
There is a unique maximal element in each equivalent class of admissible $5$-partial triangulations. Furthermore, this element is a maximal admissible arc system. In particular, if there is an admissible $4$-partial triangulation in an equivalent class of admissible $5$-partial triangulations, then it is the maximal element, which is the unique admissible $4$-partial triangulation in this equivalent class.
\end{lemma}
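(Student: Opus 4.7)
The plan is to identify the maximal element of $[\calp_5]$ explicitly as the $5$-partial triangulation $\calp_{\max}$ obtained from any representative by simultaneously subdividing every external $5$-gon of type \textbf{F3}. For such a polygon $\bbp$ with consecutive black vertices $b_1, b_2, b_3, b_4$ and red vertex $r$ lying on the boundary arc from $b_4$ back to $b_1$, the only diagonal producing the \textbf{F1}+\textbf{F4} subdivision is the arc $a := b_1 b_4$ that separates $r$ from the rest; connecting any other pair of black vertices of $\bbp$ would cut off an internal triangle inside $\bbp$, which is forbidden by Lemma~\ref{lem:ply}. Distinct \textbf{F3}-polygons of a representative are disjoint, so these subdivisions commute, and $\calp_{\max}$ is a well-defined admissible $5$-partial triangulation in $[\calp_5]$ containing every other element of the class. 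This gives both existence and uniqueness of the maximum.

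Next, I verify that $\calp_{\max}$ is a maximal admissible arc system. Suppose for contradiction that some simple zigzag arc $\za \notin \calp_{\max}$ could be added to yield a strictly larger admissible arc system. Since $\za$ has no interior intersection with any arc of $\calp_{\max}$, it is contained in the closure of a single polygon $\bbp$ of $\calp_{\max}$ and has both endpoints at black vertices of $\bbp$. Because $\calp_{\max}$ contains no \textbf{F3}, $\bbp$ is of type \textbf{F1}, \textbf{F2}, \textbf{F4}, or \textbf{F5}. An \textbf{F1} has no diagonal; the unique diagonal of an \textbf{F2}, each of the two diagonals of an \textbf{F4}, and every diagonal of an \textbf{F5} cuts off an internal triangle, contradicting Lemma~\ref{lem:ply}. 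Hence no such $\za$ exists and $\calp_{\max}$ is maximal.

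For the last assertion, an admissible $4$-partial triangulation $\calq \in [\calp_5]$ has all polygons of at most four edges, hence contains neither \textbf{F3} nor \textbf{F5} (both have five edges). In particular no \textbf{F3}-subdivision can be applied to $\calq$, forcing $\calq = \calp_{\max}$. Therefore, whenever a $4$-partial triangulation exists in the class it must coincide with the maximum, and is automatically unique within the class.

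The main obstacle is justifying the construction in the first paragraph: that the diagonal $a = b_1 b_4$ of an \textbf{F3} is indeed a zigzag arc with respect to $\zD^*$ and that inserting it preserves admissibility, i.e.\ no new oriented intersection acquires weight one. The zigzag property reduces to a local check in the polygons of $\zD^*$ meeting $b_1$, $r$, and $b_4$, where $a$ can be drawn close to the boundary near $r$ so that it sweeps exactly through the fan of $\zD^*$-arcs at $r$; the weight condition is extracted by combining this local configuration with the admissibility already present in $\calp_5$ and Lemma~\ref{lem:sumwt} applied to the ambient triangles visible in the surface model.
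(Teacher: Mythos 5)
Your second and third paragraphs (ruling out diagonals in $\textbf{F1}$, $\textbf{F2}$, $\textbf{F4}$, $\textbf{F5}$ via Lemma~\ref{lem:ply}, and identifying an admissible $4$-partial triangulation as the maximal element) follow essentially the same route as the paper. The gap is in your first paragraph, and it propagates.

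You assert that the maximal element $\calp_{\max}$ is obtained from a representative by subdividing \emph{every} external $5$-gon of type $\textbf{F3}$ into $\textbf{F4}+\textbf{F1}$, so that $\calp_{\max}$ contains no $\textbf{F3}$. But a subdivision is only a legitimate move within $[\calp_5]$ if the resulting arc system is again \emph{admissible}: inserting the diagonal $a=b_1b_4$ introduces new oriented intersections between $a$ and the arcs of the triangulation sharing the endpoints $b_1$, $b_4$, and nothing a priori forces all of these to have weight different from one. If one of them has weight one, the subdivided system is not admissible, hence not an element of the class, and your $\calp_{\max}$ does not exist as described. (Even if each individual subdivision were admissible, two subdividing arcs can share a $\bpoint$-endpoint when the corresponding $\textbf{F3}$s do, and their mutual oriented intersection could still have weight one; so ``the subdivisions commute'' as arc systems does not by itself give admissibility of the simultaneous subdivision.) You acknowledge this as ``the main obstacle'' at the end and offer only a heuristic sketch, but it is not a routine local check to be supplied later: the paper's own proof explicitly allows the maximal element $\calp$ to retain $\textbf{F3}$ polygons, writing that ``one cannot add an extra arc to any external $5$-gon of type $\textbf{F3}$ (if there exists such a $5$-gon) to get an admissible arc system.'' The paper instead takes $\calp$ to be the inclusion-maximal element of the finite class (no explicit construction), and the quoted property is then a \emph{consequence} of maximality rather than something to be verified polygon by polygon. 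With that reorganization, your case analysis in the second paragraph must also treat $\bbp=\textbf{F3}$: the $\textbf{F4}+\textbf{F1}$ diagonal is excluded exactly by maximality in the class, and the other two diagonals cut off internal triangles and are excluded by Lemma~\ref{lem:ply}. As written, your argument omits this case because it rests on the unjustified premise that $\calp_{\max}$ is $\textbf{F3}$-free.
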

\begin{proof}
The first statement is clear, where the maximal element contains any element in the same equivalent class.
We denote this maximal element by $\calp$.
In particular, in $\calp$, one cannot add an extra arc to any external $5$-gon of type $\textbf{F3}$ (if there exists such a $5$-gon) to get an admissible arc system.

Now we show the second statement, that is, $\calp$ is a maximal admissible arc system. Otherwise, suppose that we can add an extra arc $\zg$ to $\calp$ to obtain an admissible partial triangulation $\calp'=\calp\cup \{\zg\}$. Then there is a $m$-gon, denoted by $\bbp$, in $\calp$ such that $\zg$ separates $\bbp$ into two polygons $\bbp'$ and $\bbp''$ in $\calp'$. Since $\calp$ is a $5$-partial triangulation, we have $3\leqslant m \leqslant 5$, and $\bbp$ is one of the types $\textbf{F4}$ or $\textbf{F5}$ in Figure \ref{fig:ply}. But then one of $\bbp'$ and $\bbp''$ must be an internal triangle formed by arcs in $\calp'$, noticing that $\calp$ is a maximal element in the equivalent class. This contradicts the assumption that $\calp'$ is admissible since by Lemma \ref{lem:ply} there is no internal triangle in an admissible arc system. Thus, $\calp$ is a maximal admissible arc system.

Since each admissible $4$-partial triangulation must be a maximal admissible arc system, the last statement holds.
\end{proof}

Denote by $\mathbf{\mr}$ the set of isomorphism classes of maximal rigid modules in $\ma$. The above lemma tells us that there is a map:
\[\mathbb{M}: \{[\calp_5]\}\longrightarrow \mr,\]
that maps an equivalent class $[\calp_5]$ of admissible $5$-partial triangulations to a maximal rigid module $\mathbb{M}([\calp_5]):=\oplus M_\zg$, where $\zg$ extends all the arcs in the (unique) maximal element of $[\calp_5]$.

\begin{theorem}\label{thm:max-m2}
The map $\mathbb{M}$ establishes a bijection from the set of the equivalent classes of admissible $5$-partial triangulations on $(\cals,\calm,\zD^*)$ to the set of the maximal rigid modules in $\ma$.
\end{theorem}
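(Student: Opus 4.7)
The plan is to reduce the theorem to Proposition \ref{prop:max-m1} by matching each equivalence class of admissible $5$-partial triangulations with a maximal admissible arc system. Well-definedness of $\mathbb{M}$ is exactly Lemma \ref{lem:5partmax}. Injectivity is then immediate: if two classes map to the same maximal rigid module, their unique maximal representatives coincide by Proposition \ref{prop:max-m1}, hence so do the classes.

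For surjectivity, I would start from $M \in \mr$ and its corresponding maximal admissible arc system $\calp$ (via Proposition \ref{prop:max-m1}), and show that $\calp$ is the maximal element of some equivalence class of admissible $5$-partial triangulations. This reduces to three observations. First, Lemma \ref{lem:completion} completes $\calp$ to an admissible partial triangulation, and maximality forces this completion to equal $\calp$, so $\calp$ is itself a partial triangulation. Second, every polygon of $\calp$ has at most five edges: external $m$-gons with $m \geqslant 6$ are excluded by Proposition \ref{lem:ply2}, and internal triangles by Lemma \ref{lem:ply}, while an internal $m$-gon $\bbp$ with $m \geqslant 6$ is ruled out by cutting $\cals$ along the $m$ arcs bounding $\bbp$ via Proposition \ref{prop:red2}, whereby $\bbp$ becomes an isolated disk on which the induced boundary arcs form external triangles with the newly added $\rpoint$-points; Lemma \ref{lem:disk2} then produces an extra admissible zigzag arc on this disk, which Proposition \ref{prop:red2} lifts back to an arc of $\cals$ strictly enlarging $\calp$, contradicting maximality. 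Third, $\calp$ contains no external $5$-gon of type $\textbf{F3}$, since the simple zigzag arc in $\zD$ dual to the $\rpoint$-corner of any $\textbf{F3}$ subdivides it into an $\textbf{F4}$ and an $\textbf{F1}$, and a local analysis of $\zD^*$ at that $\rpoint$-corner shows that its two oriented intersections with $\calp$ have weight $0$, so adjoining it would preserve admissibility and contradict maximality.

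Combining these three observations with Lemma \ref{lem:5partmax} exhibits $\calp$ as the maximal representative of its equivalence class $[\calp_5]$, with $\mathbb{M}([\calp_5]) = M$ by construction. The main obstacle is the internal case of the second observation: one must verify that the cut along the $m$ arcs bounding $\bbp$ isolates the interior of $\bbp$ as a single disk meeting the external-triangle hypothesis of Lemma \ref{lem:disk2}, even when $\bbp$ encloses extra boundary components or punctures, and that the admissibility of the enlarged arc system faithfully transports back across the cut via Proposition \ref{prop:red2}; once this configuration is confirmed, the three steps close and the bijection follows.
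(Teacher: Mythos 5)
Your overall route is the paper's: reduce to Proposition \ref{prop:max-m1}, then show that a maximal admissible arc system $\calp$ is already an admissible $5$-partial triangulation by completing it (Lemma \ref{lem:completion}) and ruling out $m$-gons with $m\geqslant 6$ via Proposition \ref{lem:ply2}, the cutting Proposition \ref{prop:red2}, and Lemma \ref{lem:disk2}; Lemma \ref{lem:5partmax} then translates between the two pictures. Observations one and two are sound, and the caveat you flag for the internal case is legitimate — the paper does assert without comment that cutting along the $m$ arcs of $\bbp$ yields a disk on which Lemma \ref{lem:disk2} applies. This is justified because a polygon of a partial triangulation is by definition a simply connected region, and each of the $m$ cut arcs contributes one $\bpoint$-vertex and one new $\rpoint$-point to the resulting boundary, so the hypotheses of Lemma \ref{lem:disk2} are met.

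Your third observation, however, is both false and unnecessary. It is \emph{not} true that the maximal element of an equivalence class avoids \textbf{F3} polygons. The arc $\delta$ subdividing an \textbf{F3} into an \textbf{F4} and an \textbf{F1} is indeed zigzag, but at a black endpoint $p$ its oriented intersection with the adjacent arc $\za$ of $\calp$ has weight $|w_p(\delta)-w_p(\za)|$, and nothing forces this to vanish: it depends on which arc of the polygon of $\zD^*$ containing $p$ is the first one $\za$ crosses, and it can perfectly well equal $1$. Indeed the proof of Lemma \ref{lem:5partmax} explicitly hedges with ``if there exists such a $5$-gon'', and the last sentence of that lemma is conditional precisely because an equivalence class need not contain any admissible $4$-partial triangulation. (Also, $\delta$ is generally not the arc of $\zD$ dual to an arc of $\zD^*$, since it may cross several arcs of $\zD^*$ in the fan at the $\rpoint$-corner.) Fortunately you never need the claim: once observations one and two show that the maximal admissible arc system $\calp$ is an admissible $5$-partial triangulation, Lemma \ref{lem:5partmax} tells you the maximal element $\calq$ of $[\calp]$ is itself a maximal admissible arc system with $\calp\subseteq\calq$, whence $\calp=\calq$ by maximality of $\calp$, regardless of whether $\calp$ contains any \textbf{F3} face; surjectivity then closes exactly as you intend, and your injectivity and well-definedness arguments are correct.
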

\begin{proof}
To show that $\mathbb{M}$ is a bijection, we have to show that any admissible arc system can be completed as an admissible $5$-partial triangulation.
By Lemma \ref{lem:completion}, each admissible arc system can be completed as an admissible partial triangulation. Thus, we only need to show that if there is a $m$-gon $\bbp$ in an admissible partial triangulation $\calp$ with $m\geqslant 6$, then it is not maximal. 

By Proposition \ref{lem:ply2}, $\bbp$ is an internal polygon formed by arcs in $\calp$, where we denote by $\zG=\{\zg^i, 1\leqslant i \leqslant m\}$ the set of arcs in $\bbp$. Then on the cutting surface $(\cals_\zG,\calm_\zG,\zD_\zG)$, there is a connected component with boundary segments $\mathsf{b}_i, 1\leqslant i \leqslant m$, where each $\mathsf{b}_i$ is induced by $\zg^i$. Denote this component by $(\cals',\calm',\zD'^*)$, where $\cals'$ is a disk and there are $m$ $\rpoint$-points in $\calm'$. Denote by $\zg^i_1$ the $\bpoint$-arc that forms a triangle with the segment $\mathsf{b}_i$. Then by Proposition \ref{prop:red2}, $\zG'=\{\zg_1^i, 1\leqslant i \leqslant m\}$ is an admissible arc system on $(\cals',\calm',\zD'^*)$, which is not maximal by Lemma \ref{lem:disk2}. Then after lifting the arc system on the original surface, $\calp$ is not maximal.
\end{proof}

\begin{remark}
An almost rigid module over a finite-dimensional algebra is introduced in \cite{BMGS23}, which is a generalization of classical rigid modules. The original motivation in \cite{BMGS23} was to give an algebraic interpretation of Catalan combinatorics, where the authors show that there is a bijection between the maximal
almost rigid $A$-modules and triangulations of a disk with $(n+1)$ $\bpoint/\rpoint$-marked points, if $A$ is a hereditary algebra of type $A_n$. More recently,
it is proved in \cite{BCGS24} that maximal almost rigid modules over a general gentle algebra are in bijection with so-called permissible triangulations on the associated surface. Note that there may exist internal triangles in such a permissible triangulation. 

On the other hand, $3$-partial triangulations considered in this paper coincide with the classical triangulations on the surface. However, suppose that a coordinated-marked surface has an admissible $3$-partial triangulation. Then each polygon of it must be an external triangle since there is no internal triangle in a partial triangulation according to the Lemma \ref{lem:ply}. Therefore, the surface is, in fact, trivial, that is, a disk with two marked $\bpoint$-point, or a disk with one $\bpoint$-point and one puncture. 
\end{remark}

\subsection{The ranks of maximal rigid modules}

In this subsection, we compute the rank of a maximal rigid module, that is, the number of the indecomposable non-isomorphic direct summands of $M$, or equivalently, the number of internal edges in the associated $5$-partial triangulation.
We start by introducing some notation. 

Let $(\cals,\calm,\zD^*)$ be a coordinated-marked surface. Denote by $g$, $b$, $m$, and $p$ the genus, the number of boundary components, the number of $\bpoint/\rpoint$-points, and the number of punctures of the surface. Denote by $A$ the gentle algebra associated with $(\cals,\calm,\zD^*)$, whose rank is as follows, see for example in \cite{APS23}, 
$$n=2g-2+b+p+m.$$

Let $\calp_5$ be a $5$-partial triangulation on the surface. Denote by $e$, $f$, and $v$ the number of edges, faces (polygons), and vertices in $\calp_5$. Since the set of the endpoints of the edges in $\calp_5$ coincides with $\calm=\calm_{\rpoint}\cup\calm_\bpoint\cup\calp_\bpoint$, we have 
$$v=2m+p.$$ 
On the other hand, we have $$f=\sum_{1\leqslant i \leqslant 5}f_i,$$ where $f_i$ is the number of faces of type $\textbf{Fi}$, respectively, depicted in Figure \ref{fig:ply}. 
Let $e_1$ and $e_2$ be the number of internal and external edges in $\calp_5$, respectively. Then we have $$e=e_1+e_2\text{, and}$$ 
$$e_2=2m,$$
where the last equality holds since each external edge has adjacent $\bpoint/\rpoint$-points as endpoints.

\begin{theorem}\label{thm:max-m3}
Let $M$ be the maximal rigid module in $\ma$ associated with a (maximal) $5$-partial triangulation $\calp_5$. Under the above notation, the rank of $M$ equals 
$$e_1=n+f_4+f_5.$$
\end{theorem}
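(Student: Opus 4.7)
The plan is to derive $e_1 = n + f_4 + f_5$ by applying Euler's formula to the cell decomposition of $\cals$ given by $\calp_5$ together with the boundary segments, then combining it with the rank identity $n = 2g-2+b+p+m$ and a short count of the $\rpoint$-vertices of the external polygons.

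The first step is to ensure that every polygon of $\calp_5$ is a genuine topological disk, i.e.\ that no puncture lies in the interior of any polygon. I would argue this by maximality: if a puncture $q$ lay inside a polygon $\bbp$, then one could adjoin to $\calp_5$ a zigzag arc from $q$ to a suitably chosen $\bpoint$-vertex of $\bbp$, adjusting the choice via the flip procedure of Definition \ref{def:flip} (together with Lemma \ref{lem:flip}) so that all its oriented intersections with $\calp_5$ have weight different from one, contradicting the maximality of $\calp_5$ provided by Lemma \ref{lem:5partmax}. This is the step I expect to be the main technical obstacle, since producing such an arc to a puncture requires a careful local analysis of the configuration of $\zD^*$ inside $\bbp$.

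Granting this, $\calp_5$ together with the boundary segments is a genuine CW decomposition of $\cals$, with counts $V = 2m + p$ (the $m$ boundary $\bpoint$-points, the $m$ boundary $\rpoint$-points and the $p$ punctures), $E = e_1 + 2m$ (the $e_1$ internal arcs plus the $2m$ boundary segments, since marked points alternate along each boundary component), and $F = \sum_{i=1}^{5} f_i$. Euler's formula $V - E + F = \chi(\cals) = 2 - 2g - b$ rearranges to
\[e_1 \;=\; \sum_{i=1}^{5} f_i \;+\; (p + 2g + b - 2),\]
and the parenthesised expression equals $n - m$ by the rank formula, giving $e_1 = \sum_i f_i + n - m$.

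To finish, by inspection of Figure \ref{fig:ply} each external polygon (types \textbf{F1}, \textbf{F2}, \textbf{F3}) has exactly one $\rpoint$-vertex while the internal polygons \textbf{F4} and \textbf{F5} have none; conversely, each of the $m$ boundary $\rpoint$-points is a corner of exactly one polygon of $\calp_5$, namely the polygon whose two boundary-segment edges meet at it. Hence $f_1 + f_2 + f_3 = m$, so $\sum_i f_i = m + f_4 + f_5$, and substituting yields $e_1 = n + f_4 + f_5$, as desired.
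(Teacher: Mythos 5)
Your Euler-characteristic computation is exactly the paper's argument: the same vertex count $v=2m+p$, edge count $e=e_1+2m$, the observation that $f_1+f_2+f_3=m$ since every external polygon has precisely one $\rpoint$-corner and every boundary $\rpoint$-point lies in exactly one polygon, and the same substitution of the rank formula $n=2g-2+b+p+m$ into $v-e+f=2-2g-b$.

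The one place you diverge is the preliminary step you flag as ``the main technical obstacle,'' namely ruling out punctures lying in the interior of a face. This worry is legitimate in spirit (Euler's formula needs the faces to be $2$-cells with all of $\calm$ on the $1$-skeleton), but it is already settled before Theorem~\ref{thm:max-m3} is stated: by Definition~\ref{def:par-tri} an $s$-partial triangulation cuts the surface into polygons, i.e.\ disks, so a puncture must already be a vertex of one of them, and Theorem~\ref{thm:max-m2} (via Lemma~\ref{lem:completion} and Proposition~\ref{prop:red2}) guarantees that the maximal admissible arc system underlying $M$ is such a $5$-partial triangulation. Invoking this is both shorter and safer than the ad hoc argument you sketch, which would require producing an admissible zigzag arc from an interior puncture to the boundary of the face and checking the weight conditions by hand, a nontrivial local analysis that the paper deliberately avoids by packaging everything into the cutting-surface machinery earlier in the section. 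Apart from this detour your proof is correct and coincides with the paper's.
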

\begin{proof}
The Euler characteristic of the marked surface is $$\chi=2-2g-b.$$
On the other hand, $\calp_5$ is a CW-complex whose Euler characteristic is 
\[f-e+v,\]
which equals the Euler characteristic of the marked surface. Therefore we have
\[f-e+v=2-2g-b.\]
Thus, the rank of $M$ equals
\[e_1=n+f_4+f_5,\]
using equalities
\[e=e_1+e_2=e_1+2m, v=2m+p, f=m+f_4+f_5,\]
where the last equality holds since a polygon of $\calp_5$ has (exactly) one $\rpoint$-point if and only if it is of the type $\textbf{Fi}$ for $i=1,2,3$, and thus $f_1+f_2+f_3=m.$
\end{proof}

\begin{example}\label{ex:4pt}
For any gentle algebra $A$ associated with a coordinated-marked surface $(\cals,\calm,\zD^*)$, we construct a $4$-partial triangulation $\calp$ by adding several arcs in the injective dissection $\zD_I=t^{-1}(\zD^*)$. Let $\bbp$ be a $(\nu+2)$-gon of $\zD_I$ that has $\nu$ arcs and two boundary segments. Denote by $\za_i, 1\leqslant i \leqslant \nu,$ the arcs in $\bbp$, where we label them clockwise, and denote by $p_{i-1}$ and $p_i$ the endpoints of $\za_i$.
Let $\zb_j, 1\leqslant j \leqslant s,$ be an $\bpoint$-arc in $\bbp$ with endpoints $p_0$ and $p_{2j+1}$, where $s=[{\frac{\nu-1}{2}}]$. Then $\zb_j's$ separate $\bbp$ into $s$ internal $4$-gons of type $\textbf{F4}$ and one external triangle of type $\textbf{F1}$ if $\nu$ is odd, while they separate $\bbp$ into $s$ internal $4$-gons of type $\textbf{F4}$ and one external $4$-gon of type $\textbf{F2}$ if $\nu$ is even. Denote by $\zG$ the set of arcs obtained from $\zD_I$ by adding arcs $\zb_j's$ to each polygon $\bbp$. Then it is not hard to see that $\calp$ is an admissible $4$-partial triangulation, which is a maximal admissible arc system by Lemma \ref{lem:5partmax}.
Furthermore, by the above argument, we know that the rank of $\calp$ equals
$$n+\sum_{\bbp_i}[{\frac{\nu_i-1}{2}}],$$
where $\bbp_i$ extends all the polygons of $\zD_I$. In Proposition \ref{prop:geoinjcl}, we will see that $\calp$ corresponds to the $\tau_2$-orbit of injective modules in $\ma$, see the picture in Figure \ref{fig:18}, where $\zb_j's$ are labeled by the blue arcs $\zg_p^{2j+1}$.
\end{example}

The following result describes the surface of a gentle algebra so that the rank of a maximal rigid module over the algebra equals the rank of the algebra.

\begin{corollary}\label{thm:ply}
Let $(\cals,\calm,\zD^*)$ be a coordinated-marked surface associated with a gentle algebra $A$. Let $n$ be the rank of $A.$
Then the rank of any maximal rigid module in $\ma$ is not smaller than $n$.
Furthermore, any maximal rigid module in $\ma$ has rank $n$ if and only if $A$ is a hereditary algebra of type $A$ or of affine type $A$.
\end{corollary}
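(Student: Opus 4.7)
The first statement is immediate from Theorem \ref{thm:max-m3}: since $f_4,f_5\geq 0$, the rank of any maximal rigid module equals $n+f_4+f_5\geq n$. For the second statement, the same formula reduces the question to showing that every admissible $5$-partial triangulation has $f_4=f_5=0$ precisely when $A$ is hereditary of type $A$ or $\widetilde A$; equivalently the triangulation consists only of external polygons of types \textbf{F1}, \textbf{F2}, \textbf{F3}.

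For the ``if'' direction I plan to invoke the classical theory of hereditary algebras: when $A$ is hereditary of type $A_n$ or $\widetilde A_n$ one has $\gd A=1$, and every basic maximal rigid $A$-module is a tilting module (Happel--Unger), with exactly $n$ pairwise non-isomorphic indecomposable summands. Combined with Theorem \ref{thm:max-m2}, this forces $f_4=f_5=0$ for the associated $5$-partial triangulation.

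For the ``only if'' direction I plan to argue by contrapositive. I first recall that the gentle axioms rule out $D$- or $E$-type branching: a vertex admitting both an incoming arrow and two outgoing arrows would immediately contradict the ``at most one nonzero composition'' axiom when $I=0$, and symmetrically for $2$ in plus $1$ out. Hence the underlying graph of a connected hereditary gentle quiver is a line (type $A$) or a cycle (type $\widetilde A$), and the hypothesis forces $A$ to carry a non-trivial relation $\aaa\bbb\in I$. My goal is then to exhibit an admissible $4$-partial triangulation with $f_4\geq 1$ and conclude by Theorem \ref{thm:max-m3}. The natural candidate is the explicit $4$-partial triangulation built from the injective dissection $\zD_I$ in Example \ref{ex:4pt}, whose $f_4$ equals $\sum_{\bbp}\lfloor(\nu_\bbp-1)/2\rfloor$; this is strictly positive as soon as some polygon of $\zD_I$ carries at least three arcs.

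The step I expect to require the most care is showing that a non-trivial relation in $A$ always produces such a $\zD_I$-polygon with $\nu\geq 3$. Geometrically the relation $\aaa\bbb$ corresponds to three arcs $\ell_i^*,\ell_j^*,\ell_k^*$ of $\zD^*$ meeting at two distinct $\rpoint$-points with compatible anticlockwise angles, and I would translate this configuration through the anti-twist $t^{-1}$ to obtain three arcs of $\zD_I$ bordering a common polygon. If a puncture or a more exotic topological feature obstructs this direct transfer, I would fall back on the cutting-surface reductions of Proposition \ref{prop:red2} together with Lemmas \ref{lem:disk1} and \ref{lem:disk2}, cutting the surface along a convenient zigzag arc to reduce to a disk and then producing an internal $4$- or $5$-gon by hand.
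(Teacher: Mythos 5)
Your proposal follows the paper's route: both the first claim and the ``if'' direction are argued identically, and for the ``only if'' direction the paper likewise evaluates the rank formula of Theorem~\ref{thm:max-m3} on the $4$-partial triangulation of Example~\ref{ex:4pt} and shows that rank $n$ forces every polygon of $\zD_I$ (hence of $\zD^*$) to have at most two arcs, so that $A$ is hereditary. Your contrapositive formulation (a non-trivial relation forces a $\zD_I$-polygon with $\nu\geq 3$) is the same observation read in reverse; the paper does not need the cutting-surface reductions and neither will you.
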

\begin{proof}
The first statement can be directly derived from Theorem \ref{thm:max-m2}.
We prove the second statement.
Let $M$ be the maximal rigid module associated with the $4$-partial triangulation $\calp$ constructed in Example \ref{ex:4pt}.
Since the rank of $M$ is $n$, using the formula of the rank of $\calp$ given in the example, we get $\nu_i=1$ or $2$ for each polygon $\bbp_i$ of $\zD_I$. 
Then each polygon in $\zD^*$ is also a triangle or a $4$-gon, and thus the associated gentle algebra is hereditary by the definition of a gentle algebra arising from $\zD^*$ given in Definition \ref{definition:gentle algebra from dissection}.
Conversely, if $A$ is a gentle algebra of type $A$ or affine type $A$, then the maximal rigid modules coincide with the (classical) tilting modules. Therefore, the rank of any maximal rigid module equals the rank of the algebra since this holds for any tilting module.
\end{proof}


\begin{remark}
Skew-gentle algebras and string algebras are two kinds of generalizations of gentle algebras. 
The geometric models are established in \cite{HZZ23} and \cite{BC24}, respectively. It also seems interesting to realize maximal rigid modules over them as certain partial triangulations on the surface.
\end{remark}

\section{Higher-homology for the module categories of gentle algebras}\label{section:higher-hom-mod}

\subsection{Basic concepts in higher Auslander-Reiten theory}\label{subsection:Higher Auslander-Reiten}
In this subsection, we recall some basic concepts in the higher Auslander-Reiten theory. We refer the reader to \cite{ARS,ASS} for the basic terminologies in the representation theory of finite-dimensional algebras appearing in the following, and to \cite{I07b,I11} and survey papers \cite{I08, JK19} for more details on the higher Auslander-Reiten theory.

\begin{definition}\label{define n-cluster tilting}
Let $A$ be a finite-dimensional algebra, and let $n\ge1$. 
We call a module $M$ in $\ma$  \emph{$n$-rigid} if $\Ext^i(M,M)=0$ for any $1\leqslant i\leqslant n-1$.
We call $M$ \emph{$n$-cluster tilting} in a extension closed subcategory $\calc$ of $\ma$ if
	\begin{eqnarray*}
		\add M&=&\{X\in \calc\ |\ \Ext^i(X,\calc)=0\ (1\leqslant i\leqslant n-1)\}\\
		&=&\{X\in \calc\ |\ \Ext^i(\calc,X)=0\ (1\leqslant i\leqslant n-1)\}.
	\end{eqnarray*}
\end{definition}
We denote by
\[D:=\Hom_k(-,k):\ma\leftrightarrow \ma^{op},\]
 the $k$-duality and by
\[\nu:=D\Hom_A(-,A): \ma\to \ma\]
the Nakayama functor of $A$.
The Auslander-Bridger transpose duality and the syzygy functor are, respectively, denoted by
\[\tr:{\urm}\leftrightarrow {\urmo}\ \mbox{ and }\ 
\Omega:{\urm} \rightarrow {\urm}\]
where ${\urm}$ and ${\urmo}$ are the stable categories of $\ma$ and $\ma^{op}$ with respect to the projective modules.

For $n\ge1$, the \emph{$n$-Auslander-Reiten translations} is defined by
\[\tau_n:=D\tr\Omega^{n-1}:{\urm}\to {\urm}.\]
Note that $M\in \ma$ satisfies $\tau_nM=0$ if and only if $\pd M<n$.
The \emph{$\tau_n$-closure} of the injective module $DA$ is defined by
\[\mathfrak{M}=\mathfrak{M}_n(DA):=\add\{\tau_n^i(DA)\ |\ i\geqslant 0\}\subset \ma.\]

We introduce the following subcategories:
	\begin{eqnarray*}
		\mathfrak{I}(\mathfrak{M})&:=&\add DA,\\
		\mathfrak{P}(\mathfrak{M})&:=&\{X\in\mathfrak{M}\ |\ \pd X_A<n\}=\{X\in\mathfrak{M}\ |\ \tau_nX=0\},\\
		\mathfrak{M}_I&:=&\{X\in\mathfrak{M}\ |\ X\ \mbox{ has no non-zero summands in }\ \mathfrak{I}(\calm)\},\\
		\mathfrak{M}_P&:=&\{X\in\mathfrak{M}\ |\ X\ \mbox{ has no non-zero summands in }\ \mathfrak{P}(\mathfrak{M})\}.
	\end{eqnarray*}

\begin{definition}\label{complete}
Let $A$ be a finite-dimensional algebra.
	\begin{enumerate}
				\item We call $A$ \emph{$\tau_n$-finite} if $\gd A\le n$
		and $\tau_n^\ell(DA)=0$ holds for sufficiently large $\ell$.
		In this case, we have $\tau_n^\ell=0$.
		\item We call $A$ \emph{$n$-complete} if $\gd A\le n$ and
		the following conditions are satisfied.
		\begin{itemize}
			\item[(A$_n$)] There exists a tilting $A$-module $T$ satisfying $\mathfrak{P}(\mathfrak{M})=\add T$,
			\item[(B$_n$)] $\mathfrak{M}$ is an $n$-cluster tilting subcategory of $T^{\perp}=\{X\in \ma\ |\ \Ext_A^i(T,X)=0\ (i>0)\}$, noticing that since $T$ is tilting, $T^{\perp}$ is an extension closed subcategory
			of $\ma$. 
			\item[(C$_n$)] $\Ext_A^i(\mathfrak{M}_P,A)=0$ for any $1\leqslant i\leqslant n-1$.
		\end{itemize}
		We call $A$ \emph{absolutely $n$-complete} if $\mathfrak{P}(\mathfrak{M})=\add A$.
	\end{enumerate}
\end{definition}

Inspired by the above definition, we introduce the following concept: for an algebra $A$ with a finite global dimension, we call the minimal positive number $\ell$ such that $\tau_n^\ell=0$ the \emph{$\tau_n$-dimension} of $A$. In particular, the $\tau_n$-dimension of $A$ is infinite if there exists no such positive number.
Then an algebra is $\tau_n$-finite if and only if its $\tau_n$-dimension is finite.
 
Note that a $n$-complete algebra is $\tau_n$-finite, thus $\mathfrak{M}$ has an additive
generator $M$.
We call the endomorphism algebra $\End_A(M)$ of $M$ in $\ma$ the \emph{cone} of $A$.
One of the main theorem (Theorem 1.14) in \cite{I11} says that for any $n\ge1$, the cone of a $n$-complete algebra is $(n+1)$-complete.

\subsection{Geometric realizations for $\Omega^m$ and $\tau_m$}\label{section:geo-taun}
In the following, we consider a gentle algebra $A$ with a finite global dimension. Thus, there is no puncture on the associated coordinated-marked surface $(\cals,\calm,\zD^*)$. 

Let $\za$ be a zigzag arc with an endpoint $p$, where $p$ belongs to a polygon $\bbp=\{\ell^*_1,\ell^*_2,\cdots,\ell^*_s\}$ of $\zD^*$ with the arcs labeled clockwise, see Figure \ref{figure:weight and co-weight}.
Assume that starting from $p$, $\ell^*_t$ is the first arc that $\za$ intersects, for some $1\leqslant t \leqslant s$. We call $s-t$ the \emph{weight} of $\za$ at $p$, which is denoted by $w_p(\za)$. Under this notion, the weight $\omega(\mathfrak{p})$ of an oriented intersection $\mathfrak{p}$ from $\za$ to $\zb$ based on the endpoint $p$ defined in Definition \ref{definition: weight} equals $w_p(\za)-w_p(\zb)$.
	\begin{figure}
		\begin{center}
			\begin{tikzpicture}[>=stealth,scale=1]
				\draw[red,very thick] (1,0)--(3,1)--(3,3)--(1,4);
				\draw[red,very thick,dashed] (-1,4)--(1,4);
				\draw[red,very thick] (-1,0)--(-3,1)--(-3,3)--(-1,4);
				
				\draw[line width=1pt] (-2.5,4)--(0,0)--(2.5,4);
				\draw [ gray!60, line width=3pt] (-2,-.07)--(2,-.07);		
					\draw[line width=1.5pt] (-2,0)--(2,0);			\draw[thick,fill=white] (0,0) circle (0.07);
				\draw (0,-.5) node {$p$};
				\node[red] at (-1,3.4) {$\ell^*_{s-\omega_p(\za)}$};
				\node[red] at (-1.7,4) {$\ell^*_{t}$};
				\node[red] at (1.2,3.4) {$\ell^*_{s-\omega_p(\zb)}$};
				\node at (-1.6,2) {$\za$};
				\node at (1.5,2) {$\zb$};
				\node[red] at (-2.3,1) {$\ell^*_{1}$};
				\node[red] at (2.3,1) {$\ell^*_{s}$};
				\node[red] at (0,3) {$\bbp$};
				
				\draw[thick,bend left,->](-.2,.3)to(.2,.3);
				\node [] at (0,.6) {\bf$\mathfrak{p}$};	
				\draw[red,thick,fill=red] (1,0) circle (0.07);
				\draw[red,thick,fill=red] (-1,0) circle (0.07);
				\draw[red,thick,fill=red] (3,1) circle (0.07);
				\draw[red,thick,fill=red] (3,3) circle (0.07);
				\draw[red,thick,fill=red] (1,4) circle (0.07);
				\draw[red,thick,fill=red] (1,0) circle (0.07);
				\draw[red,thick,fill=red] (-3,1) circle (0.07);
				\draw[red,thick,fill=red] (-3,3) circle (0.07);
				\draw[red,thick,fill=red] (-1,4) circle (0.07);
			\end{tikzpicture}
		\end{center}
		\begin{center}
			\caption{The polygon $\bbp$ has $s$ $\rpoint$-arcs $\ell^*_i, 1\leqslant i \leqslant s$. For a zigzag arc $\za$ with endpoint $p$ which intersects $\ell^*_t$, the weight  $w_p(\za)$ of $\za$ at $p$ equals $s-t$. Then the weight $\omega(\mathfrak{p})$ of an oriented intersection $\mathfrak{p}$ from $\za$ to $\zb$ equals $w_p(\za)-w_p(\zb)$.}\label{figure:weight and co-weight}
		\end{center}
	\end{figure}
 
\begin{definition}
For a $(s+2)$-gon $\bbp=\{\ell^*_1,\ell^*_2,\cdots,\ell^*_s\}$ of $\zD^*$ which contains a $\bpoint$-point $p$, we define the arcs $_p^t\zg$ and $\zg_p^t$ as depicted in Figure \ref{fig:twotypearcs}, if $1\leqslant t\leqslant s$; or define them as the point $p$, which is viewed as a trivial arc, if $t > s$.

Denote by $_p^tM$ and $M_p^t$ the $A$-modules associated with $_p^t\zg$ and $\zg_p^t$ respectively. In particular, if the arcs are trivial, then $_p^tM$ and $M_p^t$ are zero modules.
\begin{figure} 
\centering 
\begin{tikzpicture}[scale=0.7]
			\draw [line width=1.5pt] (-8,0) to (-0.5,0);
			\draw [gray!60, line width=3pt] (-8,-.1)--(-0.5,-.1);	
	     	\draw[gray!60, line width=3pt] (-4.1,8.2) to[out=-90, in=85] (-4.13,7.4) to[out=-95,in=45](-5.55,5.2)to[out=-135,in=10](-7.51,4.4);
	     	\draw[cyan, line width=1.5pt,black] (-4,8.2) to[out=-90, in=85] (-4.05,7.3) to[out=-95,in=45](-5.5,5.1)to[out=-135,in=10](-7.5,4.3);  
            
			\draw [line width=1pt,red] (-6,0) to (-7.5,3);	
            \draw [line width=1pt,red] (-5.56,5) to (-7.5,3);
			\draw [line width=1pt,red] (-5.56,5) to (-2.5,5);
			\draw [line width=1pt,red] (-1,3) to (-2.5,5);
			\draw [line width=1pt,red] (-1,3) to (-2.5,0);
			\draw [line width=1pt,red] (-5.56,5) to (-2,8);
			\draw [line width=1pt,red] (-5.56,5) to (-1.7,6.8);
			\draw [line width=1pt,red,dotted] (-3,7) to (-2.6,6.4);
     	
	     	\draw [line width=1pt] (-4.05,0) to (-4.05,7.3);	
 	            
			\draw[cyan, line width=1pt,black] (-4.05,0) to[out=120, in=-20] (-8,3.8) ;		           
			\draw[thick,black, fill=white] (-4.05,0) circle (0.1);
			\draw[thick,black, fill=white] (-4.05,7.3) circle (0.1);
			\draw[thick,red, fill=red ] (-6,0) circle (0.08);
			\draw[thick,red, fill=red ] (-7.5,3) circle (0.08);
			\draw[thick,red, fill=red ] (-5.6,5) circle (0.08);
			\draw[thick,red, fill=red ] (-2.5,5) circle (0.08);
			\draw[thick,red, fill=red ] (-1,3) circle (0.1);
			\draw[thick,red, fill=red ] (-2.5,0) circle (0.08);

            \draw (-5.3,2.5) node {$\gamma$};            
            \draw (-3.4,2.5) node {$^{t}_{p}\gamma$};
			  \draw (-7.2,1.5) node[red] {$\ell^{*}_{1}$};
			\draw (-3.3,4.6) node[red] {$\ell^{*}_{t}$};
     		\draw (-5.7,4) node[red] {$\ell^{*}_{s-\omega}$};
			\draw (-1.3,1.5) node[red] {$\ell^{*}_{s}$};
            \draw (-4,-.5) node {$p$};		
            \draw (4,-.5) node {$p$};	
            \draw[red] (-2.2,2.5) node {$\bbp$};		
            \draw[red] (5.6,2.5) node {$\bbp$};	
            \draw (4,-.5) node {$p$};

			\draw [line width=1.5pt] (8,0) to (0.5,0);
			\draw [gray!60, line width=3pt] (8,-.1)--(0.5,-.1);	
	     	\draw[gray!60, line width=3pt] (4.1,8.2) to[out=-90, in=95] (4.15,7.4) to[out=-85,in=135](5.55,5.2)to[out=-45,in=170](7.51,4.4);
	     	\draw[cyan, line width=1.5pt,black] (4,8.2) to[out=-90, in=95] (4.07,7.3) to[out=-85,in=135](5.5,5.1)to[out=-45,in=170](7.5,4.3);
            
			\draw [line width=1pt,red] (6,0) to (7.5,3);	
			\draw [line width=1pt,red] (5.56,5) to (7.5,3);
			\draw [line width=1pt,red] (5.56,5) to (2.5,5);
			\draw [line width=1pt,red] (1,3) to (2.5,5);
			\draw [line width=1pt,red] (1,3) to (2.5,0);
			\draw [line width=1pt,red] (5.56,5) to (2,8);
			\draw [line width=1pt,red] (5.56,5) to (1.7,6.8);
			\draw [line width=1pt,red,dotted] (3,7) to (2.8,6.2);
			
			\draw [line width=1pt] (4.05,0) to (4.05,7.3);	
            
			\draw[cyan, line width=1pt,black] (4.05,0) to[out=120, in=-20] (1,3.8) ;		
            
			\draw[thick,black, fill=white] (4.05,0) circle (0.1);
			\draw[thick,black, fill=white] (4.05,7.3) circle (0.1);
			\draw[thick,red, fill=red ] (6,0) circle (0.08);
			\draw[thick,red, fill=red ] (7.5,3) circle (0.08);
			\draw[thick,red, fill=red ] (5.6,5) circle (0.08);
			\draw[thick,red, fill=red ] (2.5,5) circle (0.08);
			\draw[thick,red, fill=red ] (1,3) circle (0.08);
			\draw[thick,red, fill=red ] (2.5,0) circle (0.08);

			\draw (4.4,2.5) node {$\gamma^{t}_{p}$};
			\draw (7.2,1.5) node[red] {$\ell^{*}_{s}$};
			\draw (3.3,4.6) node[red] {$\ell^{*}_{t}$};
 			\draw (2.5,4) node[red] {$\ell^{*}_{s-\omega}$};
			\draw (1.3,1.5) node[red] {$\ell^{*}_{1}$};
	        \draw (2,2.5) node {$\gamma$};\end{tikzpicture}
	\caption{The definition of arcs $_p^t\zg$ and $\zg_p^t$, which give rise to modules $_p^tM$ and $M_p^t$ respectively. Assume that the weight of $\zg$ at $p$ is $\omega$, then Lemma \ref{lem:geost} says that the module $^{s-\omega+m}_pM$ is a direct summand of $\Omega^mM_\zg$ for $m\geqslant 1$, and the module $M^{s-\omega+m}_p$ is a direct summand of $\tau_mM_\zg$ for $m\geqslant 2$.} 
	\label{fig:twotypearcs} 
	\end{figure}
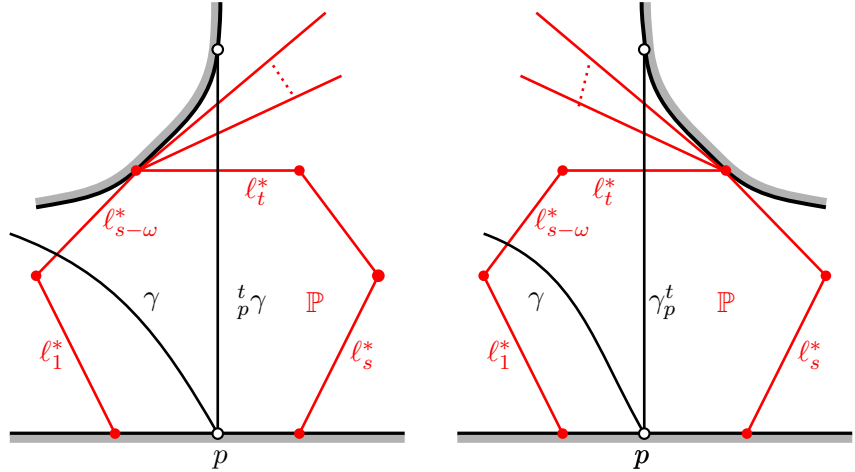
\end{definition}

\begin{lemma}\label{lem:geost}
Let $M$ be a module associated with a zigzag arc $\zg$. Denote by $p_i, i=1,2,$ the endpoints of $\zg$ which are contained in $(s_i+2)$-gon $\bbp_i$ respectively. Assume that the weights of $\zg$ at $p_i$ are respectively $\omega_i$. Then 
\begin{enumerate}
	\item $\Omega^m M=P\oplus {_{p_1}^{s_1-\omega_1+m}M}\oplus {_{p_2}^{s_2-\omega_2+m}M}$ for some projective module $P$, where $P=0$ when $m\geqslant 2$ and $P$ may be zero when $m=1$;
	\item $\tau_m M={M_{p_1}^{s_1-\omega_1+m}}\oplus {M_{p_2}^{s_2-\omega_2+m}}$ for any $m\geqslant 2$. 
\end{enumerate}
\end{lemma}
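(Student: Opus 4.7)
The plan is to leverage the fact, established in \cite{C23}, that each zigzag arc $\zg$ on $(\cals,\calm,\zD^*)$ encodes not just $M_\zg$ but also its minimal projective resolution. Specifically, the projective cover of $M_\zg$ decomposes according to the local behaviour of $\zg$ at each of its two endpoints: at $p_i$ lying in $\bbp_i=(\ell^*_1,\ldots,\ell^*_{s_i})$ with $\zg$ first crossing $\ell^*_{s_i-\omega_i}$, the local contribution to the projective cover is the indecomposable projective $M_{t(\ell^*_{s_i-\omega_i})}$ associated with the twist of $\ell^*_{s_i-\omega_i}$. Because the two endpoints behave independently, the entire argument will reduce to two parallel local computations, one at each $p_i$.

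For part (1), I would analyse the kernel of the projective cover locally at each endpoint. The kernel contributes a summand $_{p_i}^{s_i-\omega_i+1}\zg$, obtained by pushing the endpoint $p_i$ one step anticlockwise in $\bbp_i$, so that the new arc begins by crossing $\ell^*_{s_i-\omega_i+1}$ instead of $\ell^*_{s_i-\omega_i}$. Iterating and noting that the local data at the new endpoint has the same form with weight shifted by one, I would obtain
\[
\Omega^m M_\zg \;=\; P \,\oplus\, {_{p_1}^{s_1-\omega_1+m}M} \,\oplus\, {_{p_2}^{s_2-\omega_2+m}M}.
\]
The projective summand $P$ can only arise at $m=1$, where a summand of the projective cover may also sit inside its kernel; for $m\geqslant 2$ minimality of the resolution rules this out. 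The degenerate cases where $_{p_i}^{t}\zg$ ``falls off'' the polygon (i.e.\ $t>s_i$) are handled by the convention that such arcs are treated as the trivial point and give the zero module.

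For part (2), I would apply the identity $\tau_m M_\zg = D\tr\,\Omega^{m-1} M_\zg$. Concretely, take a minimal projective presentation $P_m\to P_{m-1}$ of $\Omega^{m-1} M_\zg$ read off from the rotation formula of part (1), apply the Nakayama functor $\nu$ to obtain $\nu P_m\to \nu P_{m-1}$, and identify $\tau_m M_\zg$ as the kernel. Geometrically, $\nu$ sends a projective $M_{t(\ell^*)}$ to the injective $M_{t^{-1}(\ell^*)}$, which is precisely replacing the projective twist by the injective anti-twist. Combining this duality with the anticlockwise rotation of endpoints from part (1) produces the arcs $\zg_{p_i}^{s_i-\omega_i+m}$ of Figure \ref{fig:twotypearcs}, and hence
\[
\tau_m M_\zg \;=\; M_{p_1}^{s_1-\omega_1+m} \,\oplus\, M_{p_2}^{s_2-\omega_2+m}, \qquad m\geqslant 2,
\]
with no leftover summand because any projective summand of $\Omega^{m-1} M_\zg$ is killed by $\tau_m$.

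The main obstacle will be the careful bookkeeping of the local rotation, in particular verifying that the projective summand $P$ in $\Omega M_\zg$ is correctly accounted for at $m=1$ and disappears for $m\geqslant 2$, and handling the boundary case where consecutive syzygies drive the endpoint past the last arc of $\bbp_i$ (so that the corresponding summand degenerates to zero). Everything else is local and reduces to reading off the pictures in Figure \ref{fig:twotypearcs}, so once the rotation rule and its interaction with $\nu$ are established rigorously, both formulae follow by induction on $m$.
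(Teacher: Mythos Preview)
Your overall strategy is sound, but it diverges from the paper's proof and contains one imprecise step that would need fixing.

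The paper's proof is essentially a citation: it invokes the known string-combinatorial descriptions of $\Omega^m$ for gentle algebras from \cite{HS05,BS21}, together with the description of $\tau$ from \cite{BR87} and its geometric interpretation in \cite{BC21}, and states that translating these to the surface yields the formulae. No computation is carried out; the content is deferred to the literature. Your proposal, by contrast, attempts to rederive everything inside the geometric model of \cite{C23}, which is a legitimate and more self-contained route, but considerably more work.

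The specific gap is your description of the projective cover. You claim that the projective cover of $M_\zg$ decomposes into two indecomposable projectives, one for each endpoint $p_i$. This is not correct: for a string module $M_\sigma$, the projective cover is $\bigoplus P_v$ where $v$ ranges over the \emph{peaks} (sources) of the string $\sigma$, and there can be arbitrarily many of these, not just two. What \emph{is} true, and is the actual content of the cited results, is that the \emph{syzygy} $\Omega M_\sigma$ has at most two non-projective indecomposable summands, each governed by one end of the string (cf.\ the string picture in Corollary \ref{cor;string}), plus possibly a projective summand when $m=1$. Your rotation description of the syzygy summands and the subsequent iteration are correct once this is straightened out; the error is only in attributing the two-summand decomposition to the projective cover rather than to its kernel. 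If you rephrase the first step to work directly with the known two-ended structure of $\Omega M_\sigma$ (or equivalently cite \cite{HS05,BS21} for it, as the paper does), the rest of your argument for both parts goes through.
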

\begin{proof}
The description of $\Omega^m$ is given in \cite{HS05,BS21}. After translating the results on the surface, we get the geometric realization for $\Omega^m$ as in the statement. On the other hand, the Auslander-Reiten translation $\tau$ is described in \cite{BR87}, and the geometric explanation is given in \cite{BC21}. Then we get the geometric characterization of $\tau_m$ for $ m\geqslant 2$, by combining the above results together. 
\end{proof}

The following corollary describes the string combinatorics for $\Omega^m$ and $\tau_m$, which can be proved straightforwardly by the above lemma, where $\overline{\sigma}$ is the inverse string for a string $\sigma$.
\begin{corollary}\label{cor;string}
Let $\sigma$ be a string and let $M_\sigma$ be the associated string module. If $m\geqslant 2$, then $\Omega^m(M_\sigma)=M_{\sigma^m}\oplus M_{\overline{\sigma}^m}$ and $\tau_m(M_\sigma)=M_{\sigma_m}\oplus M_{\overline{\sigma}_m}$, see $\sigma^m$ and $\sigma_m$ in Figure \ref{fig:cor}.
\begin{figure} 
	\centering 
			\begin{tikzpicture}[>=stealth, scale=.9]
				
				\path 
				(-8,0) coordinate (1)
				(-4,0) coordinate (2)
				(-3.5,-1)  coordinate (3)
				(-3,-2)  coordinate (4)
				(-2.5,-3) coordinate (5)
				(-2,-4) coordinate (6)
				(-1.5,-5) coordinate (7)
				(-1,-6) coordinate (8)
				(-0.5,-7) coordinate (9)
				(0,-8)  coordinate (10)

				(-2,0)  coordinate (11)
				(2,0) coordinate (12)
				(2.5,-1) coordinate (13)
				(3,-2) coordinate (14)
				(3.5,-3) coordinate (15)
				(4-6,-4) coordinate (16)
				(4.5-6,-3)  coordinate (17)
				(5-6,-2)  coordinate (18)
				(5.5-6,-1) coordinate (19)
				(6-6,0) coordinate (20);

				\draw 
				(1) node {}
				(2) node { }
				(3) node {$\bullet$}
				(4) node {$\bullet$}
				(5) node {$\bullet$}
				(6) node {$\bullet$}
				(7) node {$\bullet$}
				(8) node {$\bullet$}
				(9) node {$\bullet$}
				(11) node { }
				(12) node { }

				(16) node {$\bullet$}
				(17) node {$\bullet$}
				(18) node {$\bullet$}
				(19) node {$\bullet$};
			
				 \draw[decorate,line width=1pt,decoration={snake,amplitude=1mm,segment length=3mm}] (1)--(2);
				\draw [thick,->] (2)--(-3.55,-0.95);
				\draw [thick,->] (-3.5,-1.05)--(-3.05,-1.95);
				\draw [thick] (-3,-2.05)--(-2.55,-2.9);
				\draw [thick,->] (-2.45,-3.1)--(-2.05,-3.9);
				\draw [thick,->] (-1.95,-4.1)--(-1.55,-4.9);
				\draw [thick,dashed] (-1.45,-5.1)--(-1.05,-5.9);
				\draw [thick,->] (-0.95,-6.1)--(-0.55,-6.9);
		 
				\draw [thick,->] (4.45-6,-3.1)--(4.05-6,-3.9);
				\draw [thick, dashed] (4.98-6,-2.05)--(4.55-6,-2.9);
				\draw [thick,->] (5.45-6,-1.1)--(5.05-6,-1.9);

\draw (-6,-0.5) node {$\sigma$};
				\draw (-4.1,-0.5) node {$a_{1}$};
				\draw (-3.6,-1.5) node {$a_{2}$};
				\draw (-2.65,-3.4) node {$a_{m}$};
				\draw (-3.5,-5.8) node {$\sigma^{m}$};
				\draw (-3.7,-6.3) node {(longest)};

        		\draw (1.5,-1.7) node {$\sigma_{m}$};
				\draw (1.7,-2.3) node {(longest)};
                
				\draw[dotted,thick,bend left](-3.65,-0.5) to (-3.25,-1.45);	
				\draw[dotted,thick,bend left](-3.2,-1.55) to (-2.7,-2.5);	
				\draw[dotted,thick,bend left](-2.65,-2.6) to (-2.2,-3.5);	
			
				\draw[decorate, decoration={brace, amplitude=5mm, mirror}] (-2.2,-4) -- (-2.2,-7);

				\draw[decorate, decoration={brace, amplitude=5mm}] (0.2,-1) -- (0.2,-4);
			\end{tikzpicture}
	\caption{The string combinatorics of $\Omega^m$ and $\tau_m$ for $m\geqslant 2$, where the waved part is the string $\sigma$, and $\sigma^m$ and $\sigma_m$ are respectively the strings give rise to the direct summand of $\Omega^m(M_{\sigma})$ and $\tau_m(M_{\sigma})$ respectively. To get the other direct summand, we consider the inverse string $\overline{\sigma}$, and the associated strings ${\overline{\sigma}}^m$ and ${\overline{\sigma}}_m$, see Corollary \ref{cor;string}.} 
	\label{fig:cor} 
\end{figure}
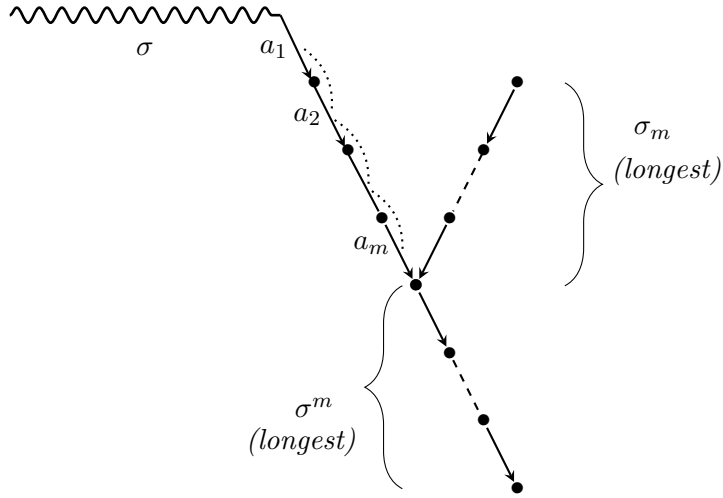
\end{corollary}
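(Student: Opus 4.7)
The plan is to reduce the statement to the already-established geometric Lemma \ref{lem:geost}, and then read off the strings from the arcs $_{p_i}^{s_i-\omega_i+m}\zg$ and $\zg_{p_i}^{s_i-\omega_i+m}$ appearing in that lemma. First, I fix the zigzag arc $\zg$ representing $M_\sigma$ with endpoints $p_1,p_2$. Since the string module depends only on the equivalence class of the underlying walk under inversion, the two possible readings of $\zg$ (starting from $p_1$ versus $p_2$) produce $\sigma$ and $\overline{\sigma}$ respectively. This is why the two summands of Lemma \ref{lem:geost}(1),(2) are destined to match $\sigma^m,\overline{\sigma}^m$ (respectively $\sigma_m,\overline{\sigma}_m$).

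Next, I translate the arcs $_p^{s-\omega+m}\zg$ and $\zg_p^{s-\omega+m}$ into walks via the \textbf{Construction} in Section~\ref{subsection: sur-module}. Each new polygon of $\zD^*$ crossed by such an extended arc contributes one arrow or its formal inverse to the walk, depending on whether the exit $\rpoint$-arc is the successor or the predecessor of the entrance $\rpoint$-arc. Inspecting Figure~\ref{fig:twotypearcs}, the $m$ additional crossings of $_p^{s-\omega+m}\zg$ all take the predecessor side, so they contribute a \emph{direct} subpath $a_1a_2\cdots a_m$ of length $m$ with each consecutive pair $a_ia_{i+1}\in I$ (the latter is automatic from the definition of $I(\zD^*)$ via minimal oriented intersections in Definition~\ref{definition:gentle algebra from dissection}). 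Comparing with the left side of Figure~\ref{fig:cor}, this is precisely the string $\sigma^m$. The analogous analysis at the other endpoint yields $\overline{\sigma}^m$. Since the projective summand $P$ in Lemma \ref{lem:geost}(1) vanishes for $m\geqslant 2$, the $\Omega^m$-statement follows.

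For $\tau_m$ the argument is parallel, using $\zg_p^{s-\omega+m}$ on the opposite side of $\zg$. Now crossings take the successor side, which on the string side means appending a hook -- a sequence of inverse arrows forming the extension $\sigma_m$ shown on the right of Figure~\ref{fig:cor}; at the other endpoint one gets $\overline{\sigma}_m$.

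The only real subtlety, and the step I expect to require the most care, is verifying that the extension length is exactly $m$. This amounts to checking that the polygon specified in the definition of $_p^{s-\omega+m}\zg$ (entering $\bbp$ through $\ell^*_{s-\omega}$ and exiting through $\ell^*_{s-\omega+m}$) is the polygon one actually reaches after taking $m$ zigzag steps from $p$. This forced by the weight identity $w_p(\zg)=s-t$ stated just before Figure~\ref{figure:weight and co-weight}, together with the zigzag rule applied iteratively to the $m$ consecutive polygons beyond $p$. Once this numerical bookkeeping is checked, Proposition~\ref{theorem:main arcs and objects} converts the arc-level decompositions of Lemma \ref{lem:geost} into the claimed module-level decompositions.
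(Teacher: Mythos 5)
Your overall strategy — invoke Lemma~\ref{lem:geost}, then read off the underlying strings of the two arcs via the \textbf{Construction} of Section~\ref{subsection: sur-module} — is exactly what the paper intends; the paper's own proof is literally the one-sentence assertion that the corollary ``can be proved straightforwardly by the above lemma.'' So the choice of route is fine.

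However, the way you carry out the arc-to-string translation is conceptually muddled at the key step. You write that the ``$m$ additional crossings of $_p^{s-\omega+m}\zg$ all take the predecessor side, so they contribute a direct subpath $a_1a_2\cdots a_m$ of length $m$ with each consecutive pair $a_ia_{i+1}\in I$,'' and then conclude ``this is precisely the string $\sigma^m$.'' Two things go wrong here. First, $a_1\cdots a_m$ with every $a_ia_{i+1}\in I$ is, by definition of a string, \emph{not} a string — a string must avoid relations. In the picture defining $\sigma^m$ (Figure~\ref{fig:cor}), the walk $a_1\cdots a_m$ is the relations-full bridge between $\sigma$ and $\sigma^m$; the brace shows that $\sigma^m$ itself is only the \emph{longest direct string starting at the target of $a_m$}, not the bridge. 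Second, the arc $_p^{s-\omega+m}\zg$ does not acquire ``$m$ additional crossings'' inside $\bbp$: both $\zg$ and $_p^{s-\omega+m}\zg$ emanate from the marked point $p$ in the interior of $\bbp$ and cross exactly one edge of $\bbp$ each (respectively $\ell^*_{s-\omega}$ and $\ell^*_{s-\omega+m}$). The first vertex of $\sigma_{_p^{s-\omega+m}\zg}$ is $\ell^*_{s-\omega+m}$, and everything else comes from how the arc continues \emph{beyond} $\bbp$. The genuine content of the corollary is that this continuation is the maximal direct string $\sigma^m$, and your proposal does not actually establish this — the paragraph you flag as requiring ``the most care'' addresses only the index bookkeeping for $s-\omega+m$, not the identification of the tail of the arc with the combinatorially defined longest string. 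As written, the proposal therefore leaves the central verification to a claim that is internally inconsistent.
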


\subsection{A geometric realization for $\tau_m$-closure of injectives}\label{section:tau-inj}

Let $A$ be a gentle algebra associated with a surface model $(\cals,\calm,\zD^*)$. We assume that the global dimension of $A$ is equal to $n$ in this subsection.
For each $2\leqslant m \leqslant n$, we will describe the $\tau_m$-orbit of injective modules over $A$. The main result in this subsection is as follows:

\begin{proposition}\label{prop:geoinjcl}
For $2\leqslant m\leqslant n$, the $\tau_m$-orbit of the injective modules in $\ma$ gives rise to an admissible $(m+2)$-partial triangulation. Therefore, the direct sum of the modules in the orbit is a rigid module. In particular, the $\tau_2$-orbit of the injective modules gives rise to an admissible $4$-partial triangulation, for which the associated module is a maximal rigid module.
\end{proposition}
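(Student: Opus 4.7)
The strategy is to apply the geometric description of $\tau_m$ from Lemma \ref{lem:geost}(2) iteratively to the arcs of the injective dissection $\zD_I$, to exhibit the $\tau_m$-orbit as a family of admissible chords lying inside each polygon of $\zD_I$, and then to verify the combinatorial conditions defining an admissible $(m+2)$-partial triangulation.

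First, by Proposition \ref{theorem:main arcs and objects}(2), the indecomposable injective $A$-modules are in bijection with the arcs of $\zD_I$, which dissect $\cals$ into polygons each containing a unique $\rpoint$-point. The plan is to work locally inside one such polygon $\bbp$ at a time: label its internal edges $\za_1,\ldots,\za_\nu$ clockwise with common endpoints $p_0,\ldots,p_\nu$, exactly as in Example \ref{ex:4pt}. Starting from an injective $M_{\za_i}$ and iterating Lemma \ref{lem:geost}(2), each application of $\tau_m$ replaces a module $M_\zg$ by the direct sum $M_{p_1}^{s_1-\omega_1+m}\oplus M_{p_2}^{s_2-\omega_2+m}$, whose underlying arcs are obtained from $\zg$ by sliding each endpoint by $m$ positions inside $\bbp$. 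The iteration terminates (the corresponding arc becomes trivial) precisely when the weight parameter exceeds the number of $\rpoint$-arcs in the relevant polygon of $\zD^*$, i.e.\ when the new endpoint would reach the opposite corner of $\bbp$. This recovers Example \ref{ex:4pt} as the case $m=2$, where the resulting arcs are the $\zg_p^{2j+1}$.

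With this explicit description of the orbit, I verify the three required properties. The arcs are pairwise non-crossing in the interior because inside each polygon of $\zD_I$ they share a common endpoint, while distinct polygons of $\zD_I$ only meet along their boundary arcs (which belong to $\zD_I$ itself). The chords $p_0\to p_{mj+1}$, together with the symmetric family anchored at $p_\nu$, cut $\bbp$ into pieces of the form $p_{mj+1}p_{mj+2}\cdots p_{m(j+1)+1}p_0$, each an $(m+2)$-gon, yielding an $(m+2)$-partial triangulation. Admissibility follows because any two chords meeting at a common endpoint arise from applications of $\tau_m$ differing by a multiple of $m$ iterations; by Lemma \ref{lem:geost}(2) their oriented intersection at that endpoint has weight a positive multiple of $m\geq 2$, hence $\neq 1$. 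Then Proposition \ref{prop:max-m1} gives that the direct sum over the orbit is rigid.

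Finally, for $m=2$ the result is an admissible $4$-partial triangulation; by Lemma \ref{lem:5partmax} any admissible $4$-partial triangulation is already a maximal admissible arc system, and Theorem \ref{thm:max-m2} then identifies the corresponding module as maximal rigid. \textbf{The main obstacle} is the inductive bookkeeping in the second step: the arcs $\zg_p^t$ are defined with respect to the polygons of $\zD^*$ at the individual endpoints $p$, whereas the global organization is by polygons of $\zD_I$, and matching these two structures (in particular, establishing that ``one application of $\tau_m$'' corresponds exactly to ``advance by $m$ marked $\bpoint$-points in $\bbp$'' and that no spurious cancellations or unexpected direct summands appear) requires a careful case analysis on how the polygons of $\zD^*$ refine those of $\zD_I$ near each corner of $\bbp$.
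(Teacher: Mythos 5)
Your overall strategy matches the paper's: localize to polygons of $\zD_I$, use Lemma~\ref{lem:geost}(2) to read off the arcs in the $\tau_m$-orbit as chords, conclude with Proposition~\ref{prop:max-m1}, and for $m=2$ invoke Lemma~\ref{lem:5partmax} and Theorem~\ref{thm:max-m2}. However, two of your geometric claims are incorrect, and they are not just the ``bookkeeping obstacle'' you flag at the end.

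First, there is no \emph{symmetric family anchored at $p_\nu$}. In the paper's argument, the $\bpoint$-point $p$ marked in the $\zD^*$-polygon $\bbp$ is a \emph{single distinguished corner} of the $\zD_I$-polygon $t^{-1}(\bbp)$, and \emph{every} new chord of $t^{-1}(\bbp)$ produced by the $\tau_m$-orbit is of the form $\zg_p^t$ and hence anchored at that one corner. The crucial observation making this work --- which your write-up never uses --- is that $(\zg_i)_p^t=\zg_p^t$ for \emph{all} injective arcs $\zg_0,\ldots,\zg_s$ through $p$, so all of $\zD_I$ contributes a single fan at $p$, not several. The chord produced at the \emph{other} endpoint of an injective arc lies in a \emph{different} polygon of $\zD_I$ (namely $t^{-1}(\bbp')$ for the $\zD^*$-polygon $\bbp'$ at that other endpoint); it does not contribute a second fan inside $t^{-1}(\bbp)$. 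If one really did take fans from both $p_0$ and $p_\nu$ inside the same polygon, for $\nu$ large enough they would cross in the interior, contradicting your own ``pairwise non-crossing'' assertion. Relatedly, your description of $\tau_m$ as ``sliding each endpoint by $m$ positions inside $\bbp$'' misreads Lemma~\ref{lem:geost}: each direct summand $M_{p_i}^{s_i-\omega_i+m}$ \emph{fixes} the endpoint $p_i$; it is the tail inside the $\zD^*$-polygon $\bbp_i$ at $p_i$ that advances, and the two summands do not both live in the $\zD_I$-polygon you are working in.

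Second, the admissibility check as written is under-justified: ``differing by a multiple of $m$ iterations'' is not the right quantity (you mean that the superscripts of the chords $\zg_p^{mt+1}$ differ by a multiple of $m$, equivalently the $\tau_m$-iteration counts differ by at least one), and you should also check the oriented intersections between the new chords and the injective arcs $\zg_0,\ldots,\zg_s$ themselves (which carry weight $mt\neq 1$) and among the $\zg_i$ (weight $0$). These are all multiples of $m$ once the single-fan picture is in place, so the conclusion is right, but the justification needs the correct anchoring. In short: the plan is sound and matches the paper, but the second paragraph asserts a picture (two fans per polygon, both endpoints sliding) that is wrong and, if taken literally, would produce interior crossings; the fix is precisely the paper's observation that $(\zg_i)_p^t=\zg_p^t$.
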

\begin{proof}
We prove the case where $m=2$, and the proofs for the other cases are similar.
Let $\bbp$ be a $(\nu+2)$-gon of $\zD^*$ that contains a $\bpoint$-point $p$. Then $t^{-1}(\bbp)$ is a $(\nu+2)$-gon of the injective dissection $\zD_I=t^{-1}(\zD^*)$, which is a set of $\bpoint$-arcs corresponding to indecomposable injective modules. Denote by $\ell^*$ the first arc of $\bbp$ in a clockwise orientation, and denote by $\zg$ the anti-twist of $\ell^*$, that is, $\zg=t^{-1}(\ell^*)$. Let $\zg_1,\zg_2,\cdots,\zg_s$ be the remained arcs in $\zD_I$ with $p$ as an endpoint, see the picture in Figure \ref{fig:18}.

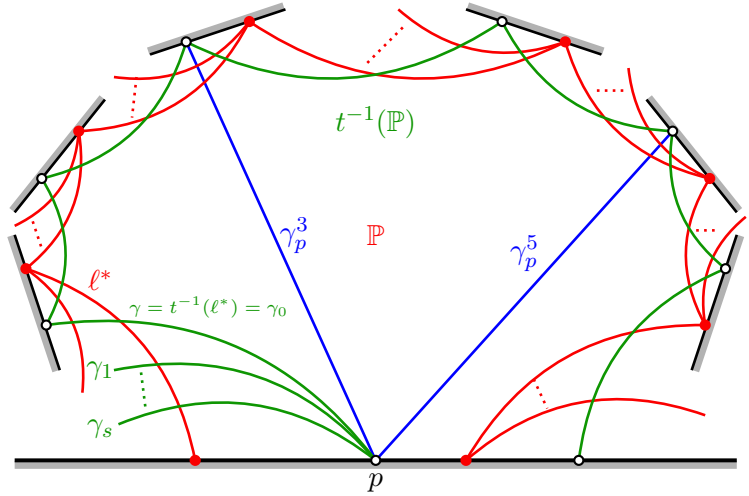
\begin{figure}
\centering
\begin{tikzpicture}[>=stealth,scale=.6]
\draw [ line width=3pt, gray!60] (8,-0.1) to (-8,-0.1);
\draw [ line width=1.5pt] (8,0) to (-8,0);
			
\draw [ line width=3pt, gray!60] (-2.04,10.07) to (-5.04,9.07);
\draw [ line width=3pt, gray!60] (2.04,10.07) to (5.04,9.07);
\draw [ line width=3pt, gray!60] (-6.05,8.04) to (-8.05,5.54);
\draw [ line width=3pt, gray!60] (6.05,8.04) to (8.05,5.54);
			\draw [ line width=3pt, gray!60] (-8.08,4.98) to (-7.08,1.98);
			\draw [ line width=3pt, gray!60] (8.08,4.98) to (7.08,1.98);
		    
			\draw [ line width=1pt] (-2,10) to (-5,9);
			\draw [ line width=1pt] (2,10) to (5,9);
			\draw [ line width=1pt] (-6,8) to (-8,5.5);
			\draw [ line width=1pt] (6,8) to (8,5.5);
			\draw [ line width=1pt] (-8,5) to (-7,2);
			\draw [ line width=1pt] (8,5) to (7,2);

			\draw [ line width=1pt, blue ] (-4.2,9.28) to (0,0) to (6.58,7.3);

			\draw [  line width=1pt, red,dotted ] (-7.6,5.3) to (-7.4,4.7) ;
			\draw [  line width=1pt, red,dotted ] (-5.3,8.5) to (-5.4,7.6) ;
			\draw [  line width=1pt, red,dotted ] (0.6,9.6) to (-0.2,8.8) ;
			\draw [  line width=1pt, red,dotted ] (4.9,8.2) to (5.5,8.2) ;
			\draw [  line width=1pt, red,dotted ] (7.1,5.1) to (7.6,5.1) ;
			\draw [  line width=1pt, red,dotted ] (3.5,1.8) to (3.8,1.15) ;
			\draw [  line width=1pt, dark-green,dotted ] (-5.2,1.95) to (-5.1,1.15) ;

			\draw [bend right,  line width=1pt, red ] (-4,0) to (-7.75,4.25) ;
			\draw [bend left,  line width=1pt, red ] (2,0) to (7.3,3) ;
			\draw [bend left,  line width=1pt, red ] (2,0) to (7.3,1) ;
			\draw [bend left,  line width=1pt, red ] (7.3,3) to  (7.4,6.25) ;
			\draw [bend left,  line width=1pt, red ] (7.3,3) to  (8.2,5.4) ;
			\draw [bend left,  line width=1pt, red ] (7.4,6.25) to  (4.2,9.28);
			\draw [bend left,  line width=1pt, red] (7.4,6.25) to  (5.6,8.7);
			\draw [bend left,  line width=1pt, red] (4.2,9.28) to (-2.8,9.73) ;
			\draw [bend left,  line width=1pt, red] (-2.8,9.73) to  (-6.58,7.3);
			\draw [bend left,  line width=1pt, red] (-6.58,7.3) to (-7.75,4.25) ;
			\draw [bend left,  line width=1pt, red] (4.2,9.28) to (0.5,10) ;
			\draw [bend left,  line width=1pt, red] (-2.8,9.73) to  (-5.8,8.5);
			\draw [bend left,  line width=1pt, red] (-6.58,7.3) to (-8,5.2) ;
			\draw [bend left,  line width=1pt, red] (-7.75,4.25) to (-6.5,1.5) ;

			\draw [bend left,  line width=1pt, dark-green ] (7.75,4.25) to (6.58,7.3) ;
			\draw [bend right,  line width=1pt, dark-green ] (0,0) to (-7.3,3) ;
			\draw [bend right,  line width=1pt, dark-green ] (0,0) to (-5.8,2) ;
			\draw [bend right,  line width=1pt, dark-green ] (0,0) to (-5.7,0.8) ;
			\draw [bend left,  line width=1pt, dark-green ] (4.5,0) to (7.75,4.25) ;
			\draw [bend right,  line width=1pt, dark-green ] (2.8,9.73) to (6.58,7.3);
			\draw [bend left,  line width=1pt, dark-green ] (2.8,9.73) to (-4.2,9.28) ;
			\draw [bend left,  line width=1pt, dark-green ] (-4.2,9.28) to (-7.4,6.25) ;
			\draw [bend left,  line width=1pt, dark-green ] (-7.4,6.25) to (-7.3,3) ;

			\draw[thick,fill=white] (0,0) circle (0.1);
			\draw[thick,fill=white] (4.5,0)  circle (0.1);
			\draw[thick,red, fill=red] (-4,0)  circle (0.1);
			\draw[thick,red, fill=red] (2,0)  circle (0.1);

			\draw[thick,fill=white] (-4.2,9.28) circle (0.1);
			\draw[thick,red, fill=red] (-2.8,9.73)  circle (0.1);
			\draw[thick,red, fill=red] (4.2,9.28) circle (0.1);
			\draw[thick,fill=white] (2.8,9.73)  circle (0.1);
			
			\draw[thick,fill=white] (-7.4,6.25) circle (0.1);
			\draw[thick,red, fill=red] (-6.58,7.3)  circle (0.1);
			\draw[thick,red, fill=red] (7.4,6.25) circle (0.1);
			\draw[thick,fill=white] (6.58,7.3)  circle (0.1);
			
			\draw[thick,red, fill=red] (-7.75,4.25) circle (0.1);
			\draw[thick,fill=white] (-7.3,3)  circle (0.1);
			\draw[thick,fill=white] (7.75,4.25) circle (0.1);
			\draw[thick,red, fill=red] (7.3,3)  circle (0.1);

			\draw (0,7.5) node[dark-green] {$t^{-1}(\bbp)$};
			\draw (-6.1,2) node[dark-green] {$\gamma_{1}$};
			\draw (-6.1,0.7) node[dark-green] {$\gamma_{s}$};
			\draw (-3.7,3.4) node[dark-green] {\tiny$\gamma = t^{-1}(\ell^{*}) = \gamma_{0}$};
			\draw (-6.1,4) node[red] {$\ell^{*}$};
\draw (0,5) node[red] {$\bbp$};
\draw (0,-.5) node {$p$};
			\draw (-1.8,5) node[blue] {$\gamma^{3}_{p}$};
			\draw (3.3,4.6) node[blue] {$\gamma^{5}_{p}$};
\end{tikzpicture}\caption{A local configuration of arcs in the $\tau_2$-orbit of injective arcs, that is, the arcs associated with injective modules. The injective arcs are colored green and the newly appearing arcs $\zg_p^{t}$ are colored blue.}
\label{fig:18}
\end{figure}
For convenience, we write $\zg=\zg_0$. Then ${(\zg_i)}_p^t=\zg_p^t$ for any $1\leqslant i \leqslant s$. Furthermore, by Lemma \ref{lem:geost}, for any $1\leqslant t \leqslant [{\frac{\nu-1}{2}}]$, $\zg_p^{2t+1}$ is an arc gives rise to one of the direct summands of $\tau_2^{t}M_{\zg_i}$, $0\leqslant i \leqslant s$. Note that these arcs separate $t^{-1}(\bbp)$ as $4$ -gons and possibly one $3$-gon. The potential $3$-gon exists only when $\nu$ is even. In this case, the $3$-gon is formed by $\zg_p^\mu$ with $\mu={\frac{\nu-2}{2}}$ and two boundary segments. Note that the set $t^{-1}(\bbp)\cup \{\zg_p^{2t+1}, 1\leqslant t \leqslant [{\frac{\nu-1}{2}}]\}$ is an admissible arc system.
Furthermore, since the weight of any arc in $\zD_I$ is equal to one, it must be of the form $\zg_i$ for some $0\leqslant i \leqslant s$ and some polygon $t^{-1}(\bbp)$.
When $\bbp$ extends all the polygons of $\zD^*$, we get all the arcs in the $\tau_2$-orbit of injective arcs, which form an admissible $4$-partial triangulation on $(\cals,\calm,\zD^*)$. 

The final admissible $4$-partial triangulation is maximal by Lemma \ref{lem:5partmax}, and thus gives rise to a maximal rigid module by Theorem \ref{thm:max-m2}.
\end{proof}

\subsection{A classification of $\tau_n$-finite gentle algebras}\label{section:taun-finite}
In this subsection, we classify the gentle algebras which are $\tau_n$-finite.
Assume that $A$ is a gentle algebra associated with a surface model $(\cals,\calm,\zD^*)$, whose global dimension is $n$.

\begin{definition}\label{def:tseq}
A \emph{$\tau_n$-sequence} is a walk $\sigma=\mathfrak{r}_1\overline{\mathfrak{t}}_1\mathfrak{r}_2\overline{\mathfrak{t}}_2\cdots\mathfrak{r}_m\overline{\mathfrak{t}}_m$, where $\mathfrak{r}_i$ is a direct walk consisting of $n$ arrows with relations at each vertex, and $\mathfrak{t}_i$ is a direct string for each $1\leqslant i \leqslant m$. 
A $\tau_n$-sequence is a \emph{$\tau_n$-cycle} if it is a cycle, that is, the starting vertex coincides with the ending vertex. If $\sigma$ is not a $\tau_n$-cycle, then we call $m$ the \emph{$\tau_n$-length} of $\sigma$, otherwise, the $\tau_n$-length is infinite.
The maximal $\tau_n$-length of the $\tau_n$-sequences in $A$ is called the \emph{$\tau_n$-length} of $A$. In particular, the $\tau_n$-length of $A$ is infinite if it contains a $\tau_n$-cycle.
\end{definition}

In the Definition, ${\mathfrak{t}}_i, 1\leqslant i \leqslant m,$ is allowed to be trivial.
The picture in Figure \ref{figlist10} gives a geometric explanation of a $\tau_n$-sequence $\sigma$, where each $\mathfrak{r}_i$ corresponds to a $(n+3)$-gon $\bbp_i$, and each $\overline{\mathfrak{t}}_i$ corresponds to a fan of $\rpoint$-arcs.
Denote by $\ell^*$ the start vertex of $\sigma$, which is the first arc in the figure, and denote by $\zg$ the anti-twist of $\ell^*$, which gives rise to an injective module $I=M_\zg$. Let $p_i$ be the $\bpoint$-point in $\bbp_i$ for each $1\leqslant i \leqslant m$. The arc $\zg_{p_i}^{n+1}$ gives rise to a module $M_{p_i}^{n+1}$, which is a direct summand of $\tau_n^i I$ by Lemma \ref{lem:geost}. Furthermore, a straightforward argument using these observations derives the following theorem.

\begin{figure}
\begin{tikzpicture}[>=stealth,scale=0.7]
\draw [line width=3pt,gray!60] (8.5,-0.05) to (-8.5,-0.05);
\draw [line width=1pt] (-8.5,0) to (8.5,0);
			
\draw [dark-green, line width=1pt] (-8.5,2) to (-6,0);
\draw [bend left, line width=1pt] (-6,0) to (-2,0);
\draw [bend left, line width=1pt] (4,0) to (8,0);
			
\draw [red,dotted, line width=1pt] (-4.4,2.1) to (-3.5,2.1);
\draw [red,dotted, line width=1pt] (5.6,2.1) to (6.4,2.1);
\draw [red,dotted, line width=1pt] (0.5,2.1) to (1.5,2.1);
\draw [bend right, red,line width=.6pt,->] (-3.7,1.3) to (-4.3,1.3);
\draw [bend right, red,line width=.6pt,->] (6.3,1.3) to(5.7,1.3) ;
			
\draw [red ,line width=1pt] (-8,0) to (-7.3,3);
\draw [red ,line width=1pt] (-4,0) to (-3.3,3);
\draw [red ,line width=1pt] (2,0) to (2.7,3);
			
\draw [red ,line width=1pt] (-4,0) to (-4.7,3);
\draw [red ,line width=1pt] (0,0) to (-0.7,3);
\draw [red ,line width=1pt] (5.3,3) to (6,0)to (6.7,3);
			
\draw [red ,dashed, line width=1pt](-7.3,3) to (-6,4.5) to (-4.7,3);
\draw [red ,dashed, line width=1pt] (-3.3,3) to(-2,4.5) to (-0.7,3);
\draw [red ,dashed, line width=1pt] (2.7,3) to(4,4.5) to (5.3,3);

\draw [bend right, red,line width=.6pt,->] (-7.4,2.5) to (-6.9,3.35);
\draw [bend right, red,line width=.6pt,->] (-3.4,2.5) to (-2.9,3.35);
\draw [bend right, red,line width=.6pt,->] (2.6,2.5) to (3.1,3.35);
\draw [bend right, red,line width=.6pt,->] (-6.4,4) to (-5.6,4);
\draw [bend right, red,line width=.6pt,->] (-2.4,4) to (-1.6,4);
\draw [bend right, red,line width=.6pt,->] (3.6,4) to (4.4,4);
\draw [bend right, red,line width=.6pt,->] (-5.1,3.4) to (-4.6,2.5);
\draw [bend right, red,line width=.6pt,->] (-1.1,3.4) to (-0.6,2.5);
\draw [bend right, red,line width=.6pt,->] (4.9,3.4) to (5.4,2.5);
            
            \draw[thick, fill=white] (-6,0)   circle (0.08);
			\draw[thick, fill=white] (-2,0)   circle (0.08);
			\draw[thick, fill=white] (4,0)   circle (0.08);
			\draw[thick, fill=white] (8,0)   circle (0.08);
			
			\draw[thick,red, fill=red] (-8,0)  circle (0.07);
			\draw[thick,red, fill=red] (-4,0)  circle (0.07);
			\draw[thick,red, fill=red] (0,0)   circle (0.07);
			\draw[thick,red, fill=red] (2,0)   circle (0.07);
			\draw[thick,red, fill=red] (6,0)   circle (0.07);
			
			\draw[thick,red, fill=red] (-7.3,3)  circle (0.07);
			\draw[thick,red, fill=red] (-4.7,3)  circle (0.07);
			\draw[thick,red, fill=red] (-6,4.5)   circle (0.07);
			\draw[thick,red, fill=red] (-3.3,3)  circle (0.07);
			\draw[thick,red, fill=red] (-0.7,3)  circle (0.07);
			\draw[thick,red, fill=red] (-2,4.5)   circle (0.07);
			\draw[thick,red, fill=red] (2.7,3)  circle (0.07);
			\draw[thick,red, fill=red] (5.3,3)  circle (0.07);
			\draw[thick,red, fill=red] (4,4.5)   circle (0.07);

\draw[red] (-4,3) node{$\overline{\mathfrak{t}}_{1}$};
\draw[red] (6,3) node{$\overline{\mathfrak{t}}_{m}$};
\draw (-6,2.5) node[red]{$\bbp_{1}$};
\draw (-2,2.5) node[red]{$\bbp_{2}$};
\draw (4,2.5) node[red]{$\bbp_{m}$};

\draw (-6,-.33) node{$p_{1}$};
\draw (-2,-.35) node{$p_{2}$};
\draw (4,-.35) node{$p_{m}$};

            \draw (-6.7,1) node[dark-green]{$\gamma$};
			\draw (-8.1,0.8) node[red]{$\ell^{*}$};
			\draw (-2.8,.8) node{$\gamma^{n}_{p_{1}}$};
			\draw (7.5,.8) node{$\gamma^{n}_{p_{m}}$};
\end{tikzpicture}
\centering 
\caption{A geometric explanation of a $\tau_n$-sequence $\sigma=\mathfrak{r}_1\overline{\mathfrak{t}}_1\mathfrak{r}_2\overline{\mathfrak{t}}_2\cdots\mathfrak{r}_m\overline{\mathfrak{t}}_m$, where each $\mathfrak{r}_i$ is a walk with full relations corresponds to a polygon $\bbp_i$, and each $\mathfrak{t}_i$ is a string corresponds to a fan of oriented intersections arising from a common endpoint.}\label{figlist10}
	\end{figure}
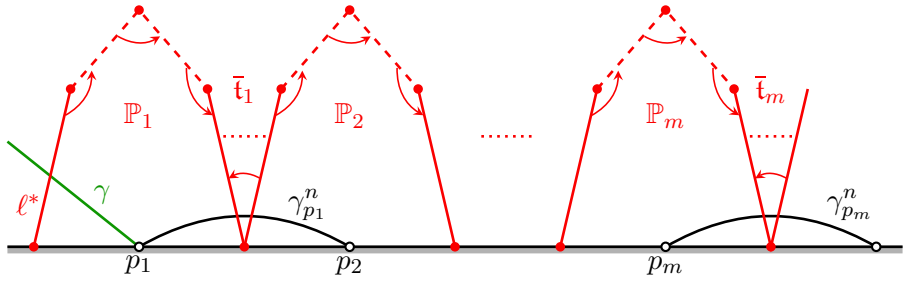
    
\begin{theorem}\label{thm:taufinite}
The $\tau_n$-dimension of a gentle algebra $A$ equals the $\tau_n$-length of $A$. In particular, it is $\tau_n$-finite if and only if there is no $\tau_n$-cycle in $A$.
\end{theorem}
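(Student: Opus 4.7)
The plan is to interpret iterated applications of $\tau_n$ to indecomposable injective modules geometrically on $(\cals,\calm,\zD^*)$, using Lemma \ref{lem:geost} and Corollary \ref{cor;string}, and to match the surviving summands directly with $\tau_n$-sequences. Since the $\tau_n$-dimension is the minimal $\ell$ with $\tau_n^\ell(DA)=0$, and every indecomposable injective has the form $I=M_\zg$ with $\zg=t^{-1}(\ell^*)$ for some $\ell^*\in\zD^*$, it suffices to show, for each such $\ell^*$, that the maximal $m$ (possibly $\infty$) with $\tau_n^m I\neq 0$ equals the maximal length of a $\tau_n$-sequence in $A$ starting at $\ell^*$; the second assertion of the theorem is then immediate.

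First I would prove the direction ``$\tau_n$-length $\leq$ $\tau_n$-dimension''. Given a $\tau_n$-sequence $\sigma=\mathfrak{r}_1\overline{\mathfrak{t}}_1\cdots\mathfrak{r}_m\overline{\mathfrak{t}}_m$ starting at $\ell^*$, I would use Figure \ref{figlist10} as a blueprint: each direct walk $\mathfrak{r}_i$ of $n$ arrows with full relations sits inside a polygon $\bbp_i$ of $\zD^*$ of at least $n+3$ edges, and each $\overline{\mathfrak{t}}_i$ corresponds to a fan of $\rpoint$-arcs at the common $\bpoint$-endpoint $p_i$ which connects $\bbp_i$ to $\bbp_{i+1}$. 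An induction on $i$, repeatedly invoking Lemma \ref{lem:geost}(2), would produce a nontrivial indecomposable summand of the shape $M_{p_i}^{\,n+1}$ inside $\tau_n^i(I)$: the weight condition $\omega_i\geq n$ at $p_i$ needed to keep $\zg_{p_i}^{\,n+1}$ nontrivial is precisely what the $n$ arrows of $\mathfrak{r}_i$ with full relations encode, while $\overline{\mathfrak{t}}_i$ reads off how this new arc is carried across the fan into $\bbp_{i+1}$ so that the induction propagates. Consequently $\tau_n^m I\neq 0$, giving $\tau_n$-dimension $\geq m$. When $\sigma$ is a $\tau_n$-cycle the trajectory loops periodically, so $\tau_n^\ell I\neq 0$ for every $\ell$, forcing infinite $\tau_n$-dimension.

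For the reverse direction, assuming $\tau_n^m I\neq 0$, I would pick any nonzero indecomposable summand of $\tau_n^m I$ and reverse-engineer a $\tau_n$-sequence. By Corollary \ref{cor;string}, applied iteratively, each such summand corresponds to a string obtained from $\sigma_\zg$ by appending at one endpoint $m$ successive ``$\tau_n$-blocks'', each consisting of a direct sub-walk of $n$ arrows with full relations (the block coming from the polygon of $\zD^*$ crossed at that step), separated by inverse direct sub-walks recording the ``turn'' between consecutive polygons. Reading these blocks and turns off yields exactly a $\tau_n$-sequence $\mathfrak{r}_1\overline{\mathfrak{t}}_1\cdots\mathfrak{r}_m\overline{\mathfrak{t}}_m$ starting at $\ell^*$, of length $m$; a periodic trajectory gives a $\tau_n$-cycle. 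This establishes equality of the two quantities, and in particular $A$ is $\tau_n$-finite precisely when no $\tau_n$-cycle exists.

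The main obstacle will be the bookkeeping around the bifurcation of $\tau_n$: each step produces up to two indecomposable summands, one per endpoint, so one must consistently track which branch prolongs the sequence and which branch dies because the adjacent polygon has fewer than $n+3$ edges (so the weight condition $\omega\geq n$ fails and $M_p^{\,n+1}$ degenerates to the trivial arc). The precise matching between the numerical weight condition of Lemma \ref{lem:geost} and the combinatorial condition ``$n$ arrows with full relations'' must be verified step-by-step, and one must check that the inverse string $\overline{\mathfrak{t}}_i$ is exactly what appears when $\zg_{p_i}^{\,n+1}$ crosses a fan of small polygons of $\zD^*$ before arriving at $\bbp_{i+1}$. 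Both are controlled by Lemma \ref{lem:geost} and the geometric pictures in Figures \ref{figure:weight and co-weight} and \ref{figlist10}, but the verification that the combinatorial bijection respects lengths, and that infinite cycles correspond on both sides, is where care is required.
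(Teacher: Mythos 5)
Your proposal takes essentially the same approach as the paper: the paper introduces the $\tau_n$-sequence/Figure \ref{figlist10} correspondence and applies Lemma \ref{lem:geost}(2) to obtain the summands $M_{p_i}^{n+1}$ of $\tau_n^i(DA)$, then simply remarks that ``a straightforward argument using these observations derives the following theorem.'' You have fleshed out both directions of that argument, and the bifurcation/degeneration bookkeeping you flag as the main obstacle is exactly what the paper leaves implicit.
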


\begin{example}
See the examples in Figure \ref{fig:ex} of $\tau_n$-finite and $\tau_n$-infinite gentle algebras for $n=2$, where the first two algebras are $\tau_2$-infinite and the last three algebras are $\tau_2$-finite.

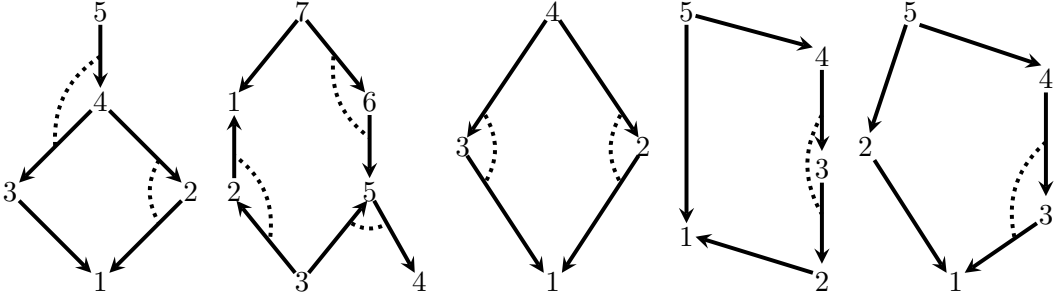
\begin{figure}
\begin{tikzpicture}[>=stealth,scale=0.6]
			
			\draw [line width=1.5pt,->] (0,5.7) to (0,4.3);
			\draw [line width=1.5pt,->] (-0.2,3.8) to (-1.8,2.2);
			\draw [line width=1.5pt,->] (0.2,3.8) to (1.8,2.2);
			\draw [line width=1.5pt,->] (1.8,1.8) to (0.2,0.2);
			\draw [line width=1.5pt,->] (-1.8,1.8) to (-0.2,0.2);
			\draw [bend right, line width=1.5pt,dotted] (0,5) to (-1,3);
			\draw [bend right, line width=1.5pt,dotted] (1.3,2.7) to (1.3,1.3);

			\draw (0,6) node{$5$};
			\draw (0,4) node{$4$};
			\draw (-2,2) node{$3$};
			\draw (2,2) node{$2$};
			\draw (0,0) node{$1$};
			
		\end{tikzpicture}
		\begin{tikzpicture}[>=stealth,scale=0.6]
			
			\draw [line width=1.5pt,->] (0.1,5.8) to (1.4,4.2);
			\draw [line width=1.5pt,->] (-0.1,5.8) to (-1.4,4.2);
			\draw [line width=1.5pt,->] (-1.5,2.3) to (-1.5,3.7);
			\draw [line width=1.5pt,->]  (1.5,3.7) to(1.5,2.3);
			\draw [line width=1.5pt,->]  (0.15,0.2) to(1.45,1.8);
			\draw [line width=1.5pt,->]  (-0.15,0.2) to(-1.45,1.8);
			\draw [line width=1.5pt,->]  (1.6,1.8) to(2.5,0.2);

			\draw [bend right, line width=1.5pt,dotted] (-0.7,1) to (-1.5,2.8);
			\draw [bend right, line width=1.5pt,dotted] (0.7,5) to (1.5,3.2);
			\draw [bend right, line width=1.5pt,dotted] (1.1,1.3) to (1.9,1.3);

			\draw (0,6) node{$7$};
			\draw (1.5,4) node{$6$};
			\draw (-1.5,4) node{$1$};
			\draw (-1.5,2) node{$2$};
			\draw (1.5,2) node{$5$};
			\draw (0,0) node{$3$};
			\draw (2.6,0) node{$4$};
		\end{tikzpicture}
		\begin{tikzpicture}[>=stealth,scale=0.6]
			
			\draw [line width=1.5pt,->] (-0.15,5.8) to (-1.9,3.2);
			\draw [line width=1.5pt,->] (0.15,5.8) to (1.9,3.2);
			\draw [line width=1.5pt,->] (-1.9,2.8) to (-0.2,0.2);
			\draw [line width=1.5pt,->] (1.9,2.8) to (0.2,0.2);
			\draw [bend right, line width=1.5pt,dotted] (1.5,3.7) to (1.5,2.2);
	    	\draw [bend left, line width=1.5pt,dotted] (-1.5,3.7) to (-1.5,2.2);

			\draw (0,6) node{$4$};
			\draw (-2,3) node{$3$};
			\draw (2,3) node{$2$};
			\draw (0,0) node{$1$};
			
		\end{tikzpicture}
		\begin{tikzpicture}[>=stealth,scale=0.6]
			
			\draw [line width=1.5pt,->] (-2.8,5.9) to (-0.2,5.2);
			\draw [line width=1.5pt,->] (-3,5.7) to (-3,1.3);
			\draw [line width=1.5pt,->] (0,4.7) to (0,2.9);
			\draw [line width=1.5pt,->] (0,2.2) to (0,0.4);
			\draw [line width=1.5pt,->] (-0.2,0.2) to (-2.8,1);
			
			\draw [bend right, line width=1.5pt,dotted] (0,3.7) to (0,1.5);

			\draw (-3,6) node{$5$};
			\draw (-3,1) node{$1$};
			\draw (0,0) node{$2$};
			\draw (0,2.5) node{$3$};
			\draw (0,5) node{$4$};
		\end{tikzpicture}
		\begin{tikzpicture}[>=stealth,scale=0.6]
			
			\draw [line width=1.5pt,->] (-1.1,5.7) to (-1.9,3.3);
			\draw [line width=1.5pt,->] (-0.8,5.7) to (1.9,4.8);
			\draw [line width=1.5pt,->] (2,4.2) to (2,1.9);
			\draw [line width=1.5pt,->] (1.8,1.3) to (0.2,0.2);
			\draw [line width=1.5pt,->] (-1.8,2.7) to (-0.2,0.2);
			\draw [bend right, line width=1.5pt,dotted] (2,3.1) to (1.3,1);
			
			\draw (-1,6) node{$5$};
			\draw (2,4.5) node{$4$};
			\draw (2,1.5) node{$3$};
			\draw (-2,3) node{$2$};
			\draw (0,0) node{$1$};
\end{tikzpicture}
\centering
\caption{Five gentle algebras with global dimension two. The first two algebras are $\tau_2$-infinite and the last three are $\tau_2$-finite.}\label{fig:ex}
\end{figure}    
\end{example}

\subsection{A classification of $n$-complete gentle algebras}\label{section:n-complete}

Let $A$ be a gentle algebra associated with a quiver $Q$. We call the number of arrows adjacent to a vertex $\ell^*$ in $Q$ the \emph{degree} of $\ell^*$. We have the following classification of $n$-complete gentle algebras.
\begin{theorem}\label{thm:ncomp}
A gentle algebra $A$ with $\gd A=n$ is $n$-complete if and only if for any walk $\sigma=\aaa_1\cdots\aaa_{n}$ in $A$ with full relations, the degree of the source is one.
\end{theorem}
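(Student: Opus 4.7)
The plan is to verify the three defining conditions (A$_n$), (B$_n$), (C$_n$) of Definition~\ref{complete} geometrically, using the realization of $\mathfrak{M}=\mathfrak{M}_n(DA)$ as an admissible $(n+2)$-partial triangulation $\calp$ from Proposition~\ref{prop:geoinjcl}. I first translate the hypothesis geometrically: since $\gd A=n$, every polygon of $\zD^*$ has at most $n+1$ arcs, and a direct walk $\sigma=\aaa_1\cdots\aaa_n$ with full relations corresponds precisely to traversing all $n$ corners of such a maximal polygon $\bbp=(\ell_0^*,\ldots,\ell_n^*)$, as depicted in Figure~\ref{figlist10}. With the anticlockwise convention for arrows, the source $s(\aaa_1)$ is an extreme arc $\ell^*$ of $\bbp$, and the condition that $\aaa_1$ is the unique arrow adjacent to $\ell^*$ becomes: the other endpoint of $\ell^*$ lies in a triangle of $\zD^*$ formed by $\ell^*$ and two boundary segments; equivalently, the injective arc $t^{-1}(\ell^*)$ abuts a boundary segment on that side.

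For the ``if'' direction I would verify the three conditions in turn. By Lemma~\ref{lem:geost}, the projective dimension of the module attached to an arc of $\calp$ is controlled by the sizes of the polygons of $\zD^*$ at its endpoints, so $\mathfrak{P}(\mathfrak{M})=\{X\in\mathfrak{M}:\tau_n X=0\}$ can be read off as an explicit sub-list of arcs of $\calp$. Under the walk hypothesis, the configuration around every maximal polygon of $\zD^*$ matches Figure~\ref{fig:18} exactly, with both extreme injective arcs bordered by boundary triangles; this allows me to identify a direct summand $T$ of $\mathfrak{M}$ with $\add T=\mathfrak{P}(\mathfrak{M})$. A rank count via Theorem~\ref{thm:max-m3}, combined with Proposition~\ref{prop:main-extensions} to check the required $\Ext^i(T,T)=0$, shows that $T$ is tilting, establishing (A$_n$). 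Condition (B$_n$) then follows from combining the bijection between extensions and weighted oriented intersections in Proposition~\ref{prop:main-extensions} with the maximality of $\calp$ in Lemma~\ref{lem:5partmax}: any $X\in T^\perp$ that is $\mathfrak{M}$-$\Ext$-vanishing on both sides in degrees $1,\ldots,n-1$ must come from an arc already in $\calp$. Finally, (C$_n$) reduces to a direct string-level $\Ext^i(-,A)$ computation via Corollary~\ref{cor;string}, using the fact that the walk hypothesis forces the strings $\sigma_m$ of Figure~\ref{fig:cor} to terminate away from the indecomposable projectives.

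For the ``only if'' direction I would contrapose. If some walk $\sigma=\aaa_1\cdots\aaa_n$ with full relations has source $\ell^*$ of degree at least two, then the injective arc $t^{-1}(\ell^*)$ admits a nontrivial zigzag continuation at its outside endpoint, and a flip-style argument in the spirit of Definition~\ref{def:flip} produces a simple zigzag arc $\zg\notin\calp$ such that $M_\zg\in T^\perp$ and $\Ext^i(M_\zg,\mathfrak{M})=\Ext^i(\mathfrak{M},M_\zg)=0$ for $1\leqslant i\leqslant n-1$, while $M_\zg\notin\add\mathfrak{M}$, violating (B$_n$). The main obstacle I anticipate lies in the tilting verification inside (A$_n$): namely, showing that the candidate $T$ really has $\pd T\leqslant n-1$ and $\Ext^i_A(T,T)=0$ for all $i>0$ purely from the geometric data. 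This requires tying the global-dimension hypothesis to the walk condition via the precise configuration of arcs around each maximal polygon of $\zD^*$, and in particular controlling how the blue arcs $\zg_p^{2j+1}$ introduced in Proposition~\ref{prop:geoinjcl} interact with the projective dissection $\zD_P$.
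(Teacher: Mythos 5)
Your overall framework — verifying (A$_n$), (B$_n$), (C$_n$) geometrically through the $(n+2)$-partial triangulation realizing $\mathfrak{M}$ — matches the spirit of the paper, but several steps diverge in ways that look problematic rather than merely different.

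For the ``only if'' direction, the paper's Lemma~\ref{lem:comp1} distinguishes two separate failure modes. If an arrow $\aaa$ \emph{ends} at the source $\ell^*_j$, one gets $\Ext^1(I_j,P_j)\neq 0$ with $I_j\in\mathfrak{M}_P$, which violates (C$_n$). If instead an extra arrow $\bbb$ \emph{starts} at $\ell^*_k$ beyond $\aaa_1^k$, a counting argument shows $\mathfrak{P}(\mathfrak{M})$ has strictly fewer than $n$ indecomposable summands, so it cannot equal $\add T$ for any tilting module, which violates (A$_n$). Your plan reduces both cases to a violation of (B$_n$) by producing an extra rigid arc $\zg$ in $T^\perp$. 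That does not work as stated: in the second case there is no tilting module $T$ at all, so $T^\perp$ and (B$_n$) are not even defined, and in the first case the obstruction the paper detects is an $\Ext^1$ between $\mathfrak{M}_P$ and a projective, not a missing cluster-tilting object. You would need a concrete argument that a single admissibly-added arc is always available and lands in $T^\perp$, which is far from obvious and is not how the paper proceeds.

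For the ``if'' direction, the appeal to Lemma~\ref{lem:5partmax} to handle (B$_n$) is misplaced. That lemma concerns admissible $5$-partial triangulations; the $\tau_n$-orbit for $n\geqslant 3$ is an $(n+2)$-partial triangulation, which is generally \emph{not} a maximal admissible arc system — its $(n+2)$-gons admit further arcs (this is exactly why $\calp_{T^\perp}$ in Lemma~\ref{lem:3.12} strictly contains $\calp_{\mathfrak{M}}$). The $n$-cluster-tilting property of $\mathfrak{M}$ inside $T^\perp$ is a two-sided $\Ext$-vanishing condition relative to $T^\perp$, not a maximality statement on the ambient surface, and the paper verifies it by explicitly computing $\calp_{T^\perp}=\zD_I\cup\{\zg_{p_j}^m\}$ and checking weights of oriented intersections arc by arc. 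Your proposal for (A$_n$) via a rank count plus $\Ext$-vanishing is closer in spirit, but the paper instead identifies $\zD_T$ as a tilting dissection (citing \cite{C24}), which directly yields that $T$ is tilting; a rank count alone would still require separately verifying the projective-dimension bound and the full orthogonality, and you have not indicated how to do that geometrically. In short: the decomposition into conditions is right, but the actual arguments for (A$_n$), (B$_n$) on one side and for the converse on the other side need to be replaced by those in Lemmas~\ref{lem:comp1} and \ref{lem:3.12}.
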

The proof of the theorem is separated into several lemmas.
\begin{lemma}\label{lem:comp1}
Let $A$ be a gentle algebra with $\gd A=n$.
Assume that there are $s$ walks $\sigma_j=\aaa^j_1\cdots\aaa^j_{n}, 1\leqslant j \leqslant s$, in $A$ with full relations.
If $A$ is $n$-complete, then the degree of the source of $\sigma_j$ is one.
\end{lemma}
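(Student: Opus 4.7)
The plan is to argue by contrapositive: assume the source $\ell^*_0$ of some walk $\sigma=\aaa_1\cdots\aaa_n$ with full relations has degree at least two, and derive a failure of one of the conditions $(A_n),(B_n),(C_n)$ in Definition \ref{complete}. The geometric setup mirrors the $\tau_n$-sequence picture of Figure \ref{figlist10}: the walk $\sigma$ corresponds to a polygon $\bbp$ of $\zD^*$ containing $n+1$ arcs $\ell^*_0,\ldots,\ell^*_n$ and one $\bpoint$-vertex $p$, with two boundary segments. Within $\bbp$ the unique minimal oriented intersection involving $\ell^*_0$ gives $\aaa_1$; hence total degree at least two at $\ell^*_0$ forces an additional minimal oriented intersection at the other endpoint $q$ of $\ell^*_0$, so $\ell^*_0$ lies in a second polygon $\bbp'$ at $q$.

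The natural object to examine is the injective module $I=M_\zg$, where $\zg=t^{-1}(\ell^*_0)$ is the anti-twist of the source. By Lemma \ref{lem:geost} and Corollary \ref{cor;string} the orbit $\{\tau_n^\ell I\}_{\ell\geqslant 0}$ on the side of $p$ is read off directly from Figure \ref{fig:twotypearcs}, and the walk $\sigma$ guarantees that it is non-zero for the appropriate range of $\ell$ and gives exactly the contribution predicted by Proposition \ref{prop:geoinjcl}. My plan is to exploit the extra polygon $\bbp'$ at $q$ to construct an indecomposable module $X\in\ma$ which lies in $T^\perp$ for the candidate tilting module $T$ provided by $(A_n)$, which is $n$-rigid in the sense $\Ext^i(X,\mathfrak{M})=0=\Ext^i(\mathfrak{M},X)$ for $1\leqslant i\leqslant n-1$, yet does not appear in the $\tau_n$-orbit of $DA$. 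This will contradict the $n$-cluster tilting requirement in $(B_n)$. The arc yielding $X$ is obtained by the same anti-twist construction applied to an arc of $\bbp'$ sharing $q$ with $\ell^*_0$, so it is genuinely distinct from any $\tau_n^\ell I$ coming from $\bbp$.

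The main obstacle is the verification that the witness module $X$ so constructed has all the required Ext-vanishing against $\mathfrak{M}$; this reduces, via Proposition \ref{prop:main-extensions}, to checking that every oriented intersection between the arc underlying $X$ and the arcs in the $\tau_n$-orbit of $DA$ has weight different from one. The geometric separation between $\bbp$ and $\bbp'$ should force all such weights to avoid the value one, but making this precise needs careful local bookkeeping akin to the analysis in the proof of Proposition \ref{prop:geoinjcl}, together with the reduction technique of Proposition \ref{prop:red2} to localize the check near $\ell^*_0$. If in some degenerate configurations this direct route to $(B_n)$ fails, the fallback is to exhibit a module in $\mathfrak{M}_P$ witnessing non-vanishing $\Ext^i(-,A)$ coming from the extension associated to the extra oriented intersection at $q$, contradicting condition $(C_n)$ instead.
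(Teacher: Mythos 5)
Your contrapositive setup is the right logical frame, but the execution diverges substantially from the paper's proof and has a genuine gap at its center.

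The paper's argument splits the case ``degree of the source $\geqslant 2$'' into two subcases, and this split does real work. \textbf{Subcase (i):} some arrow $\aaa$ ends at the source $\ell^*_j$. Then $\aaa\aaa^j_1\neq 0$ (else $\aaa\sigma_j$ would be a length-$(n+1)$ walk with full relations, contradicting $\gd A=n$), and this non-vanishing path produces $\Ext^1(I_j,P_j)\neq 0$ with $I_j\in\mathfrak{M}_P$ and $P_j$ projective, contradicting $(C_n)$ directly. \textbf{Subcase (ii):} a second arrow $\bbb$ starts at $\ell^*_j$, say with target $\ell^*$. Then $\tau_n I\neq 0$ for the injective $I$ associated to $\ell^*$ (it shares the summand $M_{p_j}^{n+1}$ with $\tau_n I_j$), and since subcase (i) already forces $\ell^*\neq\ell^*_j$, a simple \emph{count} shows $\mathfrak{P}(\mathfrak{M})$ has strictly fewer than $n$ non-isomorphic indecomposables, so it cannot be $\add T$ for a tilting module $T$; this contradicts $(A_n)$. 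Your proposal collapses these two cases, heads toward $(B_n)$ as the primary tool, and never carries out either the Ext computation or the count.

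More seriously, your proposed witness $X$ cannot do what you want. You describe it as obtained by ``the same anti-twist construction applied to an arc of $\bbp'$,'' i.e. $t^{-1}$ applied to an arc of $\zD^*$. But by Proposition \ref{theorem:main arcs and objects}(2), every such $t^{-1}(\ell^*)$ gives an indecomposable \emph{injective} module, which lies in $\add DA\subseteq\mathfrak{M}$ by definition. An object of $\mathfrak{M}$ can never witness the failure of $\mathfrak{M}$ to be $n$-cluster tilting. Being ``distinct from the $\tau_n^\ell I$ coming from $\bbp$'' is not the relevant condition, since $\mathfrak{M}$ is the $\tau_n$-closure of \emph{all} of $DA$, not of the single injective $I$. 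Additionally, contradicting $(B_n)$ presupposes the existence of a tilting $T$ with $\mathfrak{P}(\mathfrak{M})=\add T$, which is exactly what $(A_n)$ asserts; if $(A_n)$ is what actually fails (as in subcase (ii)), then $T^\perp$ is not even defined, so the $(B_n)$ route is ill-posed there. Your fallback appeal to $(C_n)$ is pointing at roughly the right phenomenon (an extension into a projective), but it is stated too loosely to count as a proof, and the ``careful local bookkeeping'' you flag as the main obstacle is precisely the content you would need to supply.

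In short: the proposal has the correct contrapositive structure and correctly identifies the $\tau_n$-orbit of injectives as the key object, but the witness module is misconstructed, the necessary case split is absent, and the central Ext/counting verifications are deferred rather than done, so the argument as written does not close.
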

\begin{proof}
Note that each walk $\sigma_j$ corresponds to a $(n+3)$-gon $\bbp_j$ in $\zD^*$, which contains a $\bpoint$-point $p_j$, see the picture in Figure \ref{fig:ncom1}. 
By Lemma \ref{lem:geost}, the $\tau_n$-closure of injective modules is
\[\mathfrak{M}:=\add\{\tau_n^i(DA)\ |\ i\ge0\}=\add\{DA,M_{p_j}^{n+1}, 1\leqslant j \leqslant s \}.\]
Denote by $\ell_j^*$ the source of $\sigma_j$, and by $I_j$ the injective module corresponds to $\ell_j^*$. Then $M_{p_j}^{n+1}$ is a direct summand of $\tau_n I_j$. In particular, $\tau_n I_j$ is nonzero and therefore $I_j$ belongs to $\mathfrak{M}_P$.
\begin{figure} 
\centering 
\begin{tikzpicture}[>=stealth,scale=.6]
\draw [ line width=3pt, gray!60] (8,-0.1) to (-5.5,-0.1);
\draw [ line width=1.5pt] (8,0) to (-5.5,0);
			
\draw [ line width=3pt, gray!60] (-2.04,10.07) to (-5.04,9.07);
			\draw [ line width=3pt, gray!60] (2.04,10.07) to (5.04,9.07);
			\draw [ line width=3pt, gray!60] (-6.05,8.04) to (-8.05,5.54);
			\draw [ line width=3pt, gray!60] (6.05,8.04) to (8.05,5.54);
			\draw [ line width=3pt, gray!60] (-7.58,3.48) to (-6.58,0.49);
			\draw [ line width=3pt, gray!60] (8.08,4.98) to (7.08,1.98);
			\draw [bend right,  line width=3pt, gray!60] (-6.6,0.5) to (-5.48,-0.11);
			
			\draw [ line width=1.5pt] (-2,10) to (-5,9);
			\draw [ line width=1.5pt] (2,10) to (5,9);
			\draw [ line width=1.5pt] (-6,8) to (-8,5.5);
			\draw [ line width=1.5pt] (6,8) to (8,5.5);
			\draw [ line width=1.5pt] (-7.5,3.5) to (-6.5,0.5);
			\draw [ line width=1.5pt] (8,5) to (7,2);
			\draw [ bend right,  line width=1.5pt] (-6.5,0.5) to (-5.5,0);
			
		    \draw [bend left,  line width=1.5pt] (0,0) to (4.5,0) ;
			
			\draw [  line width=1pt, red,dotted ] (-6.8,4) to (-7.4,4.7) ;
			\draw [  line width=1pt, red,dotted ] (-5.3,8.5) to (-5.4,7.6) ;
			\draw [  line width=1pt, red,dotted ] (0.6,9.6) to (-0.2,8.8) ;
			\draw [  line width=1pt, red,dotted ] (4.9,8.2) to (5.5,8.2) ;
			\draw [  line width=1pt, red,dotted ] (7.1,5.1) to (7.6,5.1) ;
			\draw [  line width=1pt, red,dotted ] (3.5,1.8) to (3.8,1.15) ;

			\draw [bend right,  line width=1pt, red ] (-4,0) to (-7.2,2.7) ;
			\draw [bend left,  line width=1pt, red ] (2,0) to (7.3,3) ;
			\draw [bend left,  line width=1pt, red ] (2,0) to (7.3,1) ;
			\draw [bend left,  line width=1pt, red ] (7.3,3) to  (7.4,6.25) ;
			\draw [bend left,  line width=1pt, red ] (7.3,3) to  (8.2,5.4) ;
			\draw [bend left,  line width=1pt, red ] (7.4,6.25) to  (4.2,9.28);
			\draw [bend left,  line width=1pt, red] (7.4,6.25) to  (5.6,8.7);
			\draw [bend left,  line width=1pt, red] (4.2,9.28) to (-2.8,9.73) ;
			\draw [bend left,  line width=1pt, red] (-2.8,9.73) to  (-6.58,7.3);
			\draw [bend left,  line width=1pt, red] (-6.58,7.3) to (-7.2,2.7) ;
			\draw [bend left,  line width=1pt, red] (4.2,9.28) to (0.5,10) ;
			\draw [bend left,  line width=1pt, red] (-2.8,9.73) to  (-5.8,8.5);
			\draw [bend left,  line width=1pt, red] (-6.58,7.3) to (-8,4.5) ;

			\draw [bend left,  line width=1pt, dark-green ] (7.75,4.25) to (6.58,7.3) ;
			\draw [bend right,  line width=1pt, dark-green ] (0,0) to (-6.65,1) ;
			\draw [bend left,  line width=1pt, dark-green ] (4.5,0) to (7.75,4.25) ;
			\draw [bend right,  line width=1pt, dark-green ] (2.8,9.73) to (6.58,7.3);
			\draw [bend left,  line width=1pt, dark-green ] (2.8,9.73) to (-4.2,9.28) ;
			\draw [bend left,  line width=1pt, dark-green ] (-4.2,9.28) to (-7.4,6.25) ;
			\draw [bend left,  line width=1pt, dark-green ] (-7.4,6.25) to (-6.65,1) ;

			\draw[thick,fill=white] (0,0) circle (0.1);
			\draw[thick,fill=white] (4.5,0)  circle (0.1);
			\draw[thick,red, fill=red] (-4,0)  circle (0.1);
			\draw[thick,red, fill=red] (2,0)  circle (0.1);

			\draw[thick,fill=white] (-4.2,9.28) circle (0.1);
			\draw[thick,red, fill=red] (-2.8,9.73)  circle (0.1);
			\draw[thick,red, fill=red] (4.2,9.28) circle (0.1);
			\draw[thick,fill=white] (2.8,9.73)  circle (0.1);
			
			\draw[thick,fill=white] (-7.4,6.25) circle (0.1);
			\draw[thick,red, fill=red] (-6.58,7.3)  circle (0.1);
			\draw[thick,red, fill=red] (7.4,6.25) circle (0.1);
			\draw[thick,fill=white] (6.58,7.3)  circle (0.1);
			
			\draw[thick,red, fill=red] (-7.2,2.7) circle (0.1);
			\draw[thick,fill=white] (-6.65,1)  circle (0.1);
			\draw[thick,fill=white] (7.75,4.25) circle (0.1);
			\draw[thick,red, fill=red] (7.3,3)  circle (0.1);

			\draw (1.5,1.1) node {$\gamma^{n+1}_{p_{j}}$};
			\draw (0,-0.6) node {$p_{j}$};
			\draw (-2.7,1.8) node[dark-green] {$t^{-1}(\ell^{*}_{j})$};
			\draw (0,6.5) node[dark-green] {$t^{-1}(\bbp_{j})$};
			\draw (0,5) node[red] {$\bbp_{j}$};
			\draw (-5.1,2.2) node[red] {$\ell^{*}_{j}$};
\end{tikzpicture}
\caption{The polygon $\bbp_j$ corresponds to a walk $\sigma_j=\aaa^j_1\cdots\aaa^j_{n}$. Note that if the algebra $A$ is $n$-complete, then by Lemma \ref{lem:comp1}, the source $\ell_j^*$ of $\sigma_j$ has degree one. Therefore, $\ell^*_j$ is isotopic to a boundary segment with one $\bpoint$-point.} 
	\label{fig:ncom1} 
\end{figure}
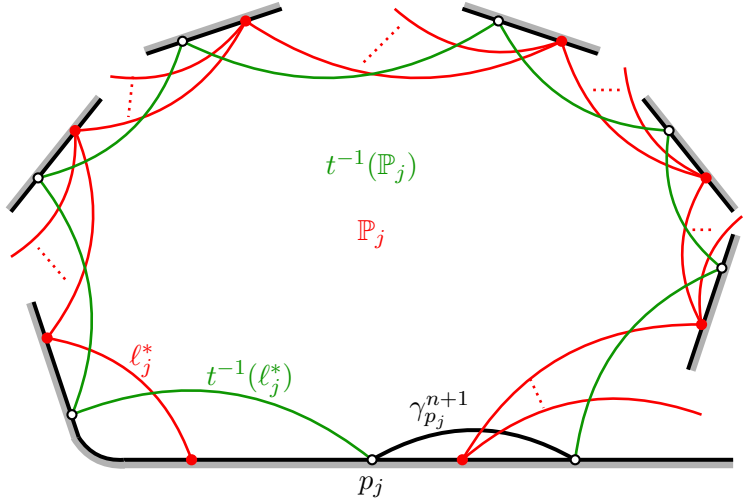

Suppose that an arrow $\aaa$ ends at $\ell^*_j$ for some $1 \leqslant j \leqslant s$. Note that we have $\aaa\aaa_1\neq 0$, otherwise $\aaa\sigma_j$ is a walk with full relations, and thus $\gd(A)\geqslant n+1$, which is a contradiction. But then, there is a non-trivial extension between $I_j$ and $P_j$, where $P_j$ is the indecomposable projective module associated with $\ell^*_j$. That is, $\Ext^1(I_j,P_j)\neq 0$, which contradicts the third condition $(C_n)$ in Definition \ref{complete} of a $n$-complete algebra.
We have shown that there does not exist an arrow ending at the source of $\sigma_j$.

Suppose that there exists $\sigma_k, 1\leqslant k \leqslant s$, such that except for $\aaa^k_1$, there is an extra arrow $\bbb$ starting at the source $\ell^*_k$. Denote by $\ell^*$ the endpoint of $\bbb$, and $I$ the injective module associated with $\ell^*$. 
Then $t^{-1}(\ell^*_k)$ and $t^{-1}(\ell^*)$ share a common endpoint, that is the unique $\bpoint$-point in $\bbp_k$. In particular, $\tau_nI_k$ and $\tau_nI$ share a common direct summand $M_{p_k}^{n+1}$ and thus $I\in \mathfrak{M}_P$.
Therefore $$\mathfrak{P}(\mathfrak{M})=\mathfrak{M}\setminus\mathfrak{M}_P\subseteq \add\{DA,M_{p_j}^n, 1\leqslant j \leqslant s \}\setminus \{I, I_j, 1\leqslant j \leqslant s\}.$$
On the other hand, by the argument in the first paragraph, no arrow goes to the source of $\sigma_j$, therefore $\ell^*\neq \ell^*_j$ for any $1\leqslant j \leqslant s$. Thus, the cardinality of the right set is strictly smaller than $n$, and $\mathfrak{P}(\mathfrak{M})$ cannot be given by a tilting module since the cardinality of a tilting module in $\ma$ is $n$. This contradicts the first condition $(A_n)$ in the definition of $n$-complete algebras. 

In summary, for each walk $\sigma_j$, the degree of the source is one.
\end{proof}
\begin{lemma}\label{lem:3.12}
Let $A$ be a gentle algebra with $\gd A=n$.
Assume that there are $s$ walks $\sigma_j=\aaa^j_1\cdots\aaa^j_{n}, 1\leqslant j \leqslant s$, in $A$ with full relations, and the degree of each source $\ell^*_j$ is one.
Denote by $p_j$ the $\bpoint$-point in the $(n+3)$-gon $\bbp_j$ of $\zD^*$ corresponding to $\sigma_j$. Then the sets of arcs correspond to $\mathfrak{M}$, $\mathfrak{P}(\mathfrak{M})$ and $T^\perp$ are respectively 
\[\calp_{\mathfrak{M}}=\zD_I\cup \{\zg_{p_j}^{n+1}, 1\leqslant j \leqslant s\},\]
\[\zD_T=\zD_I\setminus\{t^{-1}(\ell^*_j), 1\leqslant j \leqslant s\}\cup \{\zg_{p_j}^{n+1}, 1\leqslant j \leqslant s\},\]
\[\calp_{T^\perp}=\zD_I\cup\{\zg_{p_j}^m, 1\leqslant j \leqslant s, 1\leqslant m \leqslant n+1\},\]
where $T$ is a tilting module with $\mathfrak{P}(\mathfrak{M})=\add T$.
\end{lemma}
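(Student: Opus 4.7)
The plan is to establish the three descriptions in order, using Lemma \ref{lem:geost} as the main technical tool and reasoning throughout on the surface model.

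First I would compute $\calp_{\mathfrak{M}}$. By definition $\mathfrak{M}=\add\{\tau_n^i(DA)\mid i\geqslant 0\}$, so I start with the injective arcs $\zD_I$ and apply $\tau_n$ iteratively. For an injective module $I=M_{t^{-1}(\ell^*)}$, Lemma \ref{lem:geost}(2) gives $\tau_n I$ as a sum of (at most) two modules of the form $M^{s-\omega+n}_{p}$, one for each endpoint of $t^{-1}(\ell^*)$. Since $s-\omega+n>s$ unless the associated polygon of $\zD^*$ at that endpoint has $\geqslant n+3$ edges (equivalently, arises from a walk with full relations of length $\geqslant n$), and since $\gd A=n$ forbids strictly longer such walks, the only nonzero contributions come from the polygons $\bbp_j$, and the corresponding summand is exactly $M^{n+1}_{p_j}=M_{\zg^{n+1}_{p_j}}$. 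Iterating once more, I would check that $\tau_n M^{n+1}_{p_j}=0$: this is where the degree-one hypothesis on the source $\ell^*_j$ enters crucially, since it forces the $\rpoint$-arc $\ell^*_j$ to be isotopic to a boundary segment with one $\bpoint$-point (as in Figure~\ref{fig:ncom1}), so the endpoint of $\zg^{n+1}_{p_j}$ opposite to $p_j$ lies in a polygon where no walk with full relations of length $n$ starts, and Lemma \ref{lem:geost}(2) yields zero at both ends. Hence $\calp_\mathfrak{M}=\zD_I\cup\{\zg^{n+1}_{p_j},\ 1\leqslant j\leqslant s\}$.

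Next, for $\zD_T$, I would unwind $\mathfrak{P}(\mathfrak{M})=\{X\in\mathfrak{M}\mid \tau_n X=0\}$. From the previous paragraph, $\tau_n I_j=M^{n+1}_{p_j}\neq 0$ for $I_j=M_{t^{-1}(\ell^*_j)}$, while every other injective $I=M_{t^{-1}(\ell^*)}$ with $\ell^*\neq\ell^*_j$ satisfies $\tau_n I=0$, and each $M^{n+1}_{p_j}$ satisfies $\tau_n M^{n+1}_{p_j}=0$. Therefore
\[\mathfrak{P}(\mathfrak{M})=\add\Big(\bigoplus_{\ell^*\neq \ell^*_j} M_{t^{-1}(\ell^*)} \ \oplus\ \bigoplus_{j=1}^{s} M^{n+1}_{p_j}\Big),\]
whose set of arcs is precisely $\zD_T$. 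Then I would verify that the corresponding module $T$ is tilting: its rank is $n$ (we removed $s$ injective summands and added $s$ new arcs), its projective dimension is bounded by $n$ (all summands have finite projective dimension since $\gd A=n$), and rigidity follows because the arcs in $\zD_T$ have no interior intersections (they all live inside polygons $t^{-1}(\bbp)$ of $\zD_I$) and every boundary oriented intersection has weight different from one, so Proposition \ref{prop:main-extensions} gives $\Ext^i(T,T)=0$ for $i\geqslant 1$.

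Finally, for $\calp_{T^\perp}$, I would show both inclusions. For the easy direction, the arcs $\zg^m_{p_j}$ with $1\leqslant m\leqslant n+1$ all lie inside the polygon $t^{-1}(\bbp_j)$, and an explicit computation of oriented intersections with the arcs of $\zD_T$ (using Proposition \ref{prop:main-extensions} and the fact that every weight is automatically $0$ inside the fan at $p_j$, together with the termination argument above) shows $\Ext^i(T,M_{\zg^m_{p_j}})=0$ for $i\geqslant 1$, i.e., $M_{\zg^m_{p_j}}\in T^\perp$. Together with $\zD_I\subset T^\perp$ (injectives are always in any $T^\perp$), this yields $\calp_{T^\perp}\supset \zD_I\cup\{\zg^m_{p_j}\}$. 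For the converse, I would argue by contradiction: any zigzag arc $\zeta$ not in this list either intersects $t^{-1}(\bbp_j)$ crossing one of the $\zg^m_{p_j}$, or lies outside all the $t^{-1}(\bbp_j)$; in the latter case it is already in $\zD_I$, and in the former one can produce a nonzero oriented intersection of weight $\geqslant 1$ between $\zeta$ and a suitable summand of $T$, contradicting $\Ext^i(T,M_\zeta)=0$.

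The main obstacle will be the last paragraph: while the termination argument for $\tau_n$ and the tilting verification are essentially bookkeeping once the degree-one hypothesis is used, the sharp characterization of $\calp_{T^\perp}$ requires a careful case analysis of where a hypothetical extra zigzag arc can intersect the polygons $t^{-1}(\bbp_j)$ and how its oriented intersections with the generators $\zg^m_{p_j}$ and $t^{-1}(\ell^*)$ interact. This is where I expect most of the technical work — isolating an oriented intersection of weight $\geqslant 1$ whose weight persists under the reductions and cannot be avoided — to be concentrated.
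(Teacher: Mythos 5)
Your computations of $\calp_\mathfrak{M}$ and $\zD_T$ follow the same route as the paper (Lemma~\ref{lem:geost} plus the degree-one hypothesis for termination, then a cardinality/rigidity check that $T$ is tilting), and are fine. The gap is in the converse direction of the $\calp_{T^\perp}$ computation.

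You assert a dichotomy: a zigzag arc $\zeta$ not in the candidate list ``either intersects $t^{-1}(\bbp_j)$ crossing one of the $\zg^m_{p_j}$, or lies outside all the $t^{-1}(\bbp_j)$; in the latter case it is already in $\zD_I$.'' Both halves are problematic. For the first half, a zigzag arc can enter $t^{-1}(\bbp_j)$ through an endpoint without having any interior crossing with the $\zg^m_{p_j}$, and nothing forces the obstruction you describe. More seriously, the second half is simply false: a zigzag arc lying away from all the $t^{-1}(\bbp_j)$ need not be injective. It could have interior intersections with arcs in $\zD_T\cap\zD_I$, or — the case you omit entirely — it could lie inside a polygon $t^{-1}(\bbp)$ of $\zD_T$ coming from an $\rpoint$-polygon $\bbp$ that is \emph{not} one of the $\bbp_j$ (what the paper calls a type (I) polygon). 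Such an arc is $\zg_p^m$ for $2\leqslant m\leqslant\nu+1$ and is not in $\zD_I$; it must be excluded from $T^\perp$ because $\zg_p^1=t^{-1}(\ell^*)$ \emph{does} remain in $\zD_T$ there, and the oriented intersection from $\zg_p^1$ to $\zg_p^m$ has weight $m-1\geqslant 1$, giving $\Ext^{m-1}(M_p^1,M_p^m)\neq 0$. Your claim that ``every weight is automatically $0$ inside the fan'' is only correct in the type (II) polygons (precisely because there $\zg_{p_j}^1=t^{-1}(\ell_j^*)$ has been removed from $\zD_T$); it fails in type (I) polygons.

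The paper avoids these pitfalls with a cleaner structural step you do not have: if $\za\in T^\perp$ has an interior intersection with some $\zb\in\zD_T$, then $\zb$ must lie in $\zD_I$ (the arcs $\zg_{p_j}^{n+1}$ form external triangles with no interior $\bpoint$-point, so cannot be crossed), and that interior crossing forces a nonzero class in $\Ext^1(M_\za,M_\zb)\oplus\Ext^1(M_\zb,M_\za)$; injectivity of $M_\zb$ kills the first, and $M_\za\in T^\perp$ with $M_\zb\in\add T$ kills the second. This shows once and for all that arcs of $T^\perp$ sit inside polygons of $\zD_T$, after which the type (I) vs.\ type (II) case analysis (with the weight-$(m-1)$ obstruction in type (I)) finishes the identification. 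Your sketch needs this reduction and the type (I) exclusion to be a complete proof.
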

\begin{proof}
Denote by $I_j$ the injective module associated with $\ell^*_j$. Since the degree of $\ell_j^*$ is one, we have
$\tau_n I_j=M_{p_j}^{n+1}$ for each $1\leqslant j \leqslant s$, and $\tau_nI_t=0$ for any $I_t$ different from $I_j$. Therefore we have 
$$\mathfrak{M}=\add\{DA,M_{p_j}^{n+1}, 1\leqslant j \leqslant s\},$$
and \[\calp_{\mathfrak{M}}=\zD_I\cup \{\zg_{p_j}^{n+1}, 1\leqslant j \leqslant s\},\]
which is $(n+1)$ partial triangulation.

Let $\mathfrak{P}(\mathfrak{M})=\add T$. Then 
$$T=(\oplus I_t)\oplus(\oplus_{j=1}^sM_{p_j}^{n+1}),$$ where $I_t$ extends all indecomposable injective modules that are different from $I_j, 1\leqslant j \leqslant s$. 
Therefore
$$\zD_T=\zD_I\setminus\{t^{-1}(\ell^*_j), 1\leqslant j \leqslant s\}\cup \{\zg_{p_j}^{n+1}, 1\leqslant j \leqslant s\},$$
which is an admissible dissection.
Furthermore, it is not difficult to check that the weight of each oriented intersection between two arcs in $\zD_T$ is equal to zero, and it is a tilting dissection introduced in \cite{C24}, which gives rise to a tilting module $T$.

Now we prove 
$$\calp_{T^\perp}=\zD_I\cup\{\zg_{p_j}^m, 1\leqslant j \leqslant s, 1\leqslant m \leqslant n+1\}.$$ Note that $\zD_I$ and $\{\zg_{p_j}^m, 1\leqslant j \leqslant s, 1\leqslant m \leqslant n+1\}$ have common elements, where $\zg_{p_j}^1=t^{-1}(\ell^*_j)\in \zD_I$. Let $M_\za\in T^\perp$. Suppose that there is an interior intersection between $\za$ and some $\zb\in \zD_T$. Since each $\zg^{n+1}_{p_j}\in\zD_T$ forms an external triangle that contains no $\bpoint$-point, $\za$ has no interior intersection with it. Therefore, $\zb$ belongs to $\zD_I$. Note that this interior intersection gives rise to a non-zero once extension between $M_\za$ and $M_\zb$. Since $M_\zb$ is an injective module and $\Ext^1(M_\za,M_\zb)=0$, this extension belongs to $\Ext^1(M_\zb,M_\za)$. However, we have $\Ext^1(M_\zb,M_\za)=0$, since $M_\za\in T^\perp$ and $M_\zb\in \add T$.

We have shown that any arc $\za$ in $\calp_{T^\perp}$ has no interior intersection with arcs in $\zD_T$. Thus, each arc $\za$ in $\calp_{T^\perp}$ belongs to some polygon $\bbp$ of $\zD_T$. Assume that $\bbp$ is a $(\nu+3)$-gon. There are two types of such polygons, which are called polygons of type (I) and of type (II), see Figure \ref{fig:ncom2}. The polygon of type (I) is a $(\nu+3)$-gon with $\nu\leqslant n-1$, and the polygon of type (II) is a $(\nu+3)$-gon with $\nu=n$, that is, $\bbp_j$. Note that for both cases, any zigzag arc in $\bbp$ is of the form $\zg_p^{m}$ for $1 \leqslant m \leqslant \nu+1$, where $p$ is the (unique) $\bpoint$-point in $\bbp$.

Since in a polygon of type (I), the arc $\zg_p^1=t^{-1}(\ell^*)$ belongs to $\zD_T$, and the weight of the oriented intersection from $\zg_p^1$ to $\zg_p^m$ equals $m-1\leqslant n$. Thus the space $\Ext^{m-1}(M_p^1,M_p^m)$ is non-zero, and $\zg_p^m$ does not belong to $\calp_{T^\perp}$ for any $m\geqslant 2$. On the other hand, for a polygon of type (II), it can be checked directly that $\zg_{p_j}^m$ belongs to $\calp_{T^\perp}$ for any $1 \leqslant j \leqslant s$ and $1\leqslant m \leqslant n+1$. Therefore we have
$$\calp_{T^\perp}=\zD_I\cup\{\zg_{p_j}^m, 1\leqslant j \leqslant s, 1\leqslant m \leqslant n+1\},$$
which is an admissible arc system.
\begin{figure} 
	\centering 
\begin{tikzpicture}[>=stealth,scale=.6]
\draw [ line width=3pt, gray!60] (8,-0.1) to (-8,-0.1);
			\draw [ line width=1.5pt] (8,0) to (-8,0);
			
			\draw [ line width=3pt, gray!60] (-2.04,10.07) to (-5.04,9.07);
			\draw [ line width=3pt, gray!60] (2.04,10.07) to (5.04,9.07);
			\draw [ line width=3pt, gray!60] (-6.05,8.04) to (-8.05,5.54);
			\draw [ line width=3pt, gray!60] (6.05,8.04) to (8.05,5.54);
			\draw [ line width=3pt, gray!60] (-8.08,4.98) to (-7.08,1.98);
			\draw [ line width=3pt, gray!60] (8.08,4.98) to (7.08,1.98);
		    
			\draw [ line width=1.5pt] (-2,10) to (-5,9);
			\draw [ line width=1.5pt] (2,10) to (5,9);
			\draw [ line width=1.5pt] (-6,8) to (-8,5.5);
			\draw [ line width=1.5pt] (6,8) to (8,5.5);
			\draw [ line width=1.5pt] (-8,5) to (-7,2);
			\draw [ line width=1.5pt] (8,5) to (7,2);

			\draw [  line width=1pt, red,dotted ] (-7.6,5.3) to (-7.4,4.7) ;
			\draw [  line width=1pt, red,dotted ] (-5.3,8.5) to (-5.4,7.6) ;
			\draw [  line width=1pt, red,dotted ] (0.6,9.6) to (-0.2,8.8) ;
			\draw [  line width=1pt, red,dotted ] (4.9,8.2) to (5.5,8.2) ;
			\draw [  line width=1pt, red,dotted ] (7.1,5.1) to (7.6,5.1) ;
			\draw [  line width=1pt, red,dotted ] (3.5,1.8) to (3.8,1.15) ;
			\draw [  line width=1pt, red,dotted ] (-5.2,1.95) to (-6,1.915) ;

			\draw [bend left,   line width=1pt ] (0,0) to (4.5,0) ;
			\draw[cyan,line width=1pt,black] (0,0) to[out=45,in=-170](7.75,4.25);
			\draw[cyan,line width=1pt,black] (0,0) to[out=65,in=-150](6.58,7.3);
			\draw [line width=1pt ] (0,0) to (2.8,9.73) ;
			\draw[cyan,line width=1pt,black] (0,0) to[out=90,in=-35](-4.2,9.28) ;
			\draw[cyan,line width=1pt,black] (0,0) to[out=105,in=-15](-7.4,6.25);
		
			\draw [bend right,  line width=1pt, red ] (-4,0) to (-7.75,4.25) ;
			\draw [bend left,  line width=1pt, red ] (2,0) to (7.3,3) ;
			\draw [bend left,  line width=1pt, red ] (2,0) to (7.3,1) ;
			\draw [bend left,  line width=1pt, red ] (7.3,3) to  (7.4,6.25) ;
			\draw [bend left,  line width=1pt, red ] (7.3,3) to  (8.2,5.4) ;
			\draw [bend left,  line width=1pt, red ] (7.4,6.25) to  (4.2,9.28);
			\draw [bend left,  line width=1pt, red] (7.4,6.25) to  (5.6,8.7);
			\draw [bend left,  line width=1pt, red] (4.2,9.28) to (-2.8,9.73) ;
			\draw [bend left,  line width=1pt, red] (-2.8,9.73) to  (-6.58,7.3);
			\draw [bend left,  line width=1pt, red] (-6.58,7.3) to (-7.75,4.25) ;
			\draw [bend left,  line width=1pt, red] (4.2,9.28) to (0.5,10) ;
			\draw [bend left,  line width=1pt, red] (-2.8,9.73) to  (-5.8,8.5);
			\draw [bend left,  line width=1pt, red] (-6.58,7.3) to (-8,5.2) ;
			\draw [bend left,  line width=1pt, red] (-7.75,4.25) to (-6.5,1.5) ;

			\draw [bend left,  line width=1.5pt, blue ] (7.75,4.25) to (6.58,7.3) ;
			\draw [bend right,  line width=1pt, blue ] (0,0) to (-7.3,3) ;
			\draw [bend left,  line width=1pt, blue ] (4.5,0) to (7.75,4.25) ;
			\draw [bend right,  line width=1pt, blue] (2.8,9.73) to (6.58,7.3);
			\draw [bend left,  line width=1pt, blue ] (2.8,9.73) to (-4.2,9.28) ;
			\draw [bend left,  line width=1pt, blue ] (-4.2,9.28) to (-7.4,6.25) ;
			\draw [bend left,  line width=1pt, blue ] (-7.4,6.25) to (-7.3,3) ;

			\draw[thick,fill=white] (0,0) circle (0.1);
			\draw[thick,fill=white] (4.5,0)  circle (0.1);
			\draw[thick,red, fill=red] (-4,0)  circle (0.1);
			\draw[thick,red, fill=red] (2,0)  circle (0.1);

			\draw[thick,fill=white] (-4.2,9.28) circle (0.1);
			\draw[thick,red, fill=red] (-2.8,9.73)  circle (0.1);
			\draw[thick,red, fill=red] (4.2,9.28) circle (0.1);
			\draw[thick,fill=white] (2.8,9.73)  circle (0.1);
			
			\draw[thick,fill=white] (-7.4,6.25) circle (0.1);
			\draw[thick,red, fill=red] (-6.58,7.3)  circle (0.1);
			\draw[thick,red, fill=red] (7.4,6.25) circle (0.1);
			\draw[thick,fill=white] (6.58,7.3)  circle (0.1);
			
			\draw[thick,red, fill=red] (-7.75,4.25) circle (0.1);
			\draw[thick,fill=white] (-7.3,3)  circle (0.1);
			\draw[thick,fill=white] (7.75,4.25) circle (0.1);
			\draw[thick,red, fill=red] (7.3,3)  circle (0.1);

\draw (-4.7,1) node[red] {$\ell^{*}$};
\draw (-3.3,1.8) node[blue] {$ t^{-1}(\ell^{*})$};
\draw (1.1,5.5) node {$ \gamma^{m}_{p}$};
\draw (2.2,1.1) node {$ \gamma^{\nu+1}_{p}$};
\draw (0,-0.5) node {$ p$};
\end{tikzpicture}
\begin{tikzpicture}[>=stealth,scale=.6]
\draw [ line width=3pt, gray!60] (8,-0.1) to (-5.5,-0.1);
			\draw [ line width=1.5pt] (8,0) to (-5.5,0);
			
			\draw [ line width=3pt, gray!60] (-2.04,10.07) to (-5.04,9.07);
			\draw [ line width=3pt, gray!60] (2.04,10.07) to (5.04,9.07);
			\draw [ line width=3pt, gray!60] (-6.05,8.04) to (-8.05,5.54);
			\draw [ line width=3pt, gray!60] (6.05,8.04) to (8.05,5.54);
			\draw [ line width=3pt, gray!60] (-7.58,3.48) to (-6.58,0.49);
			\draw [ line width=3pt, gray!60] (8.08,4.98) to (7.08,1.98);
			\draw [bend right,  line width=3pt, gray!60] (-6.6,0.5) to (-5.48,-0.11);
			
			\draw [ line width=1.5pt] (-2,10) to (-5,9);
			\draw [ line width=1.5pt] (2,10) to (5,9);
			\draw [ line width=1.5pt] (-6,8) to (-8,5.5);
			\draw [ line width=1.5pt] (6,8) to (8,5.5);
			\draw [ line width=1.5pt] (-7.5,3.5) to (-6.5,0.5);
			\draw [ line width=1.5pt] (8,5) to (7,2);
			\draw [ bend right,  line width=1.5pt] (-6.5,0.5) to (-5.5,0);

			\draw [  line width=1pt, red,dotted ] (-6.8,4) to (-7.4,4.7) ;
			\draw [  line width=1pt, red,dotted ] (-5.3,8.5) to (-5.4,7.6) ;
			\draw [  line width=1pt, red,dotted ] (0.6,9.6) to (-0.2,8.8) ;
			\draw [  line width=1pt, red,dotted ] (4.9,8.2) to (5.5,8.2) ;
			\draw [  line width=1pt, red,dotted ] (7.1,5.1) to (7.6,5.1) ;
			\draw [  line width=1pt, red,dotted ] (3.5,1.8) to (3.8,1.15) ;

			\draw [bend right,  line width=1pt, red ] (-4,0) to (-7.2,2.7) ;
			\draw [bend left,  line width=1pt, red ] (2,0) to (7.3,3) ;
			\draw [bend left,  line width=1pt, red ] (2,0) to (7.3,1) ;
			\draw [bend left,  line width=1pt, red ] (7.3,3) to  (7.4,6.25) ;
			\draw [bend left,  line width=1pt, red ] (7.3,3) to  (8.2,5.4) ;
			\draw [bend left,  line width=1pt, red ] (7.4,6.25) to  (4.2,9.28);
			\draw [bend left,  line width=1pt, red] (7.4,6.25) to  (5.6,8.7);
			\draw [bend left,  line width=1pt, red] (4.2,9.28) to (-2.8,9.73) ;
			\draw [bend left,  line width=1pt, red] (-2.8,9.73) to  (-6.58,7.3);
			\draw [bend left,  line width=1pt, red] (-6.58,7.3) to (-7.2,2.7) ;
			\draw [bend left,  line width=1pt, red] (4.2,9.28) to (0.5,10) ;
			\draw [bend left,  line width=1pt, red] (-2.8,9.73) to  (-5.8,8.5);
			\draw [bend left,  line width=1pt, red] (-6.58,7.3) to (-8,4.5) ;

			\draw [bend left,  line width=1pt, blue ] (7.75,4.25) to (6.58,7.3) ;
			\draw [bend left,  line width=1pt, blue  ] (4.5,0) to (7.75,4.25) ;
			\draw [bend right,  line width=1pt, blue  ] (2.8,9.73) to (6.58,7.3);
			\draw [bend left,  line width=1pt, blue  ] (2.8,9.73) to (-4.2,9.28) ;
			\draw [bend left,  line width=1pt, blue  ] (-4.2,9.28) to (-7.4,6.25) ;
			\draw [bend left,  line width=1pt, blue  ] (-7.4,6.25) to (-6.65,1) ;
			\draw [bend left,  line width=1pt, blue ] (0,0) to (4.5,0) ;
			\draw [bend right,  line width=1pt, black  ] (0,0) to (-6.65,1) ;

			\draw[cyan,line width=1pt,black] (0,0) to[out=45,in=-170](7.75,4.25);
			\draw[cyan,line width=1pt,black] (0,0) to[out=65,in=-150](6.58,7.3);
			\draw [line width=1pt ] (0,0) to (2.8,9.73) ;
			\draw[cyan,line width=1pt,black] (0,0) to[out=90,in=-35](-4.2,9.28) ;
			\draw[cyan,line width=1pt,black] (0,0) to[out=105,in=-15](-7.4,6.25);

			\draw[thick,fill=white] (0,0) circle (0.1);
			\draw[thick,fill=white] (4.5,0)  circle (0.1);
			\draw[thick,red, fill=red] (-4,0)  circle (0.1);
			\draw[thick,red, fill=red] (2,0)  circle (0.1);

			\draw[thick,fill=white] (-4.2,9.28) circle (0.1);
			\draw[thick,red, fill=red] (-2.8,9.73)  circle (0.1);
			\draw[thick,red, fill=red] (4.2,9.28) circle (0.1);
			\draw[thick,fill=white] (2.8,9.73)  circle (0.1);
			
			\draw[thick,fill=white] (-7.4,6.25) circle (0.1);
			\draw[thick,red, fill=red] (-6.58,7.3)  circle (0.1);
			\draw[thick,red, fill=red] (7.4,6.25) circle (0.1);
			\draw[thick,fill=white] (6.58,7.3)  circle (0.1);
			
			\draw[thick,red, fill=red] (-7.2,2.7) circle (0.1);
			\draw[thick,fill=white] (-6.65,1)  circle (0.1);
			\draw[thick,fill=white] (7.75,4.25) circle (0.1);
			\draw[thick,red, fill=red] (7.3,3)  circle (0.1);		
\draw (1,5.3) node{$\gamma^{m}_{p_{j}}$};
\draw (2,1.1) node[blue ]{$\gamma^{n+1}_{p_{j}}$};
\draw (0,-0.6) node{$p =p_{j}$};
\draw (-5.5,2.6) node[red] {$\ell^{*}_{j}$};
\draw (-3.2,1.9) node {$ t^{-1}(\ell^{*}_{j})$};
\draw (-7.5,1) node {$p'_{j}$};
\end{tikzpicture}
\caption{Two types of polygons $\bbp$ in $\zD_T$, where the arcs in $\bbp$ are colored blue. The above picture is called type (I), which is a $(\nu+3)$-gon formed by arcs in $\zD_I$, where $1\leqslant \nu \leqslant n$. The picture below is called type (II), which is a $(n+3)$-gon obtained from a $(n+3)$-gon $t^{-1}(\bbp_j), 1\leqslant j \leqslant s,$ in $\zD_I$, after replacing $t^{-1}(\ell^*_j)$ by $\zg_{p_j}^{n+1}$.} 
	\label{fig:ncom2} 
\end{figure}
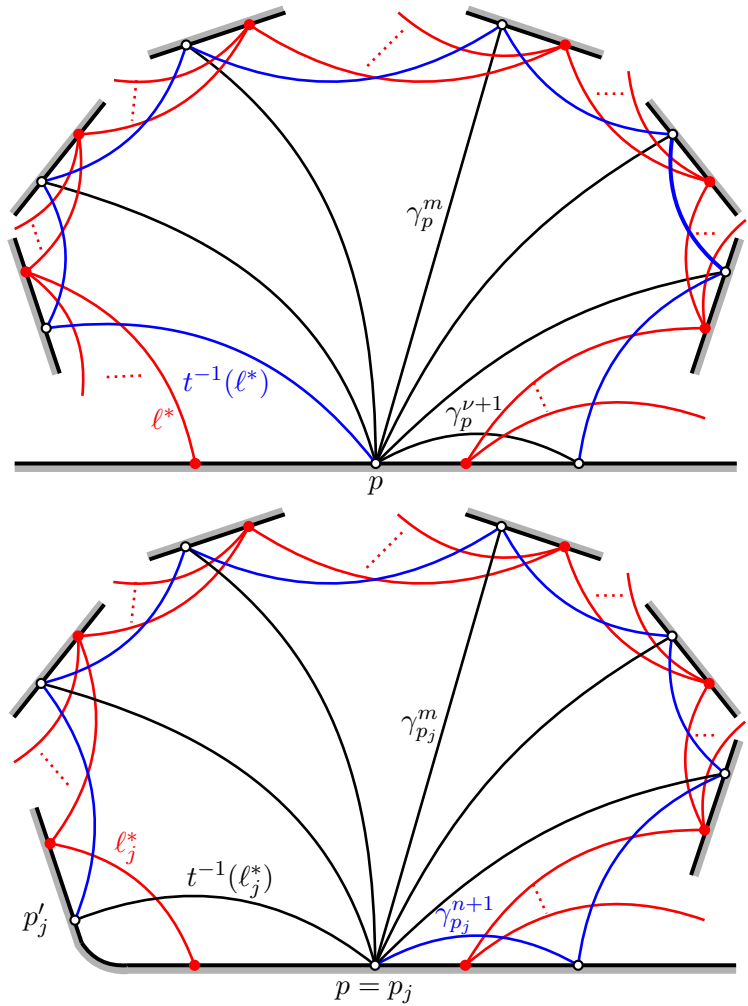

\end{proof}
\begin{proof}(Proof of Theorem \ref{thm:ncomp})
One direction has been proved in Lemma \ref{lem:comp1}.
Now we prove another direction, that is, if $A$ satisfies the conditions in the statement, then it is $n$-complete, that is, the conditions $(A_n), (B_n),$ and $(C_n)$ in Definition \ref{complete} hold. Note that $(A_n)$ has been proved in the proof of Lemma \ref{lem:3.12}, that is, the additive generator $T$ of $\mathfrak{P}(\mathfrak{M})$ is a tilting module in $\ma$.
As in the above, we assume that there are $s$ walks $\sigma_j=\aaa^j_1\cdots\aaa^j_{n}, 1\leqslant j \leqslant s$, in $A$ with full relations, and we use the notation as above.  

Now we prove $(B_n)$, that is, $\mathfrak{M}$ is a cluster tilting subcategory in $T^\perp$.
At first, note that the weights of the oriented intersections of arcs in the associated arc system $\calp_\mathfrak{M}$ equal zero, except for $t^{-1}(\ell^*_j)=\zg^1_{p_j}$ and $\zg^{n+1}_{p_j}, 1\leqslant j \leqslant s$, for which the associated weights are $n$.
Therefore, $\mathfrak{M}$ is $n$-rigid in $\ma$. 
Furthermore, $\mathfrak{M}$ is a $n$-cluster tilting subcategory in $T^\perp$ by directly checking that for any arc in $\calp_{T^\perp}\setminus\calp_\mathfrak{M}$, there exists an arc in $\calp_\mathfrak{M}$ such that there is an oriented intersection between them with weight from one to $n-1$. 

For condition $(C_n)$, note that we have $\mathfrak{M}_P=\oplus_{j=1}^sI_j$, where $I_j$ is the injective module associated with an injective arc $t^{-1}(\ell^*_j)$. Let $P$ be an indecomposable projective module associated with a projective arc $t(\ell^*)$. Then there is no interior intersection between $t^{-1}(\ell^*_j)$ and $t(\ell^*)$, since $t^{-1}(\ell^*_j)$ is isotopic to a boundary segment without $\bpoint$-point. Except for $p_j$, denote by $p'_j$ the other endpoint of $t^{-1}(\ell^*_j)$. Then the weight of any oriented intersection (if it exists) between $t^{-1}(\ell^*_j)$ and $t(\ell^*)$ arising from $p'_j$ equals zero, since $p'_j$ belongs to a triangle of $\zD^*$ consisting of an $\rpoint$-arc together with two boundary segments, cf. the below picture in Figure \ref{fig:ncom2}. On the other hand, suppose that there is an oriented intersection $\mathfrak{p}$ from $t^{-1}(\ell^*_j)$ to $t(\ell^*)$ at $p_j$. Since $t(\ell^*)$ is a twist of the $\rpoint$-arc $\ell^*$, the weight $\omega_{p_j}(t(\ell^*))$ of it at $p_j$ equals $n+1$. Then \[\omega(\mathfrak{p})=\omega_{p_j}(t(\ell^*))-\omega_{p_j}(t^{-1}(\ell_j^*))=n.\] 
Therefore $\Ext^i(I_j,P)=0$ for any $1\leqslant i\leqslant n-1$, and thus $\Ext^i(\mathfrak{M}_P,A)=0$ for any $1\leqslant i\leqslant n-1$, that is, the condition $(C_n)$ in Definition \ref{complete} is met. 

We have completed the proof.
\end{proof}

Let $A=kQ/I$ be a $n$-complete gentle algebra. We construct $\overline{Q}$ and $\overline{I}$ in the following way: $\overline{Q}$ is obtained from $Q$ by adding an additional vertex $v$ and an arrow $\bbb$ from $t(\aaa_{n})$ to $v$ for any walk $\sigma=\aaa_1\cdots\aaa_{n}$ with full relations; $\overline{I}$ is obtained from $I$ by adding the path $\aaa_{n}\bbb$ for each $\bbb$ constructed above.
	
\begin{proposition}
	Let $A=kQ/I$ be a $n$-complete gentle algebra. Then its cone $\End_A(M)\cong k\overline{Q}/\overline{I}$.
\end{proposition}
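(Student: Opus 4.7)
The plan is to compute the quiver and relations of the cone $\End_A(M)$ directly from the geometric model, using that $\calp_\mathfrak{M}$ controls all $\Hom$-spaces among summands of $M$ via Proposition~\ref{prop:main-extensions}. By Lemma~\ref{lem:3.12}, the indecomposable summands of $M$ are the injectives $I_i = M_{t^{-1}(\ell^*_i)}$ for $i \in Q_0$ together with the modules $M_{p_j}^{n+1}$, one for each walk $\sigma_j = \aaa_1^j \cdots \aaa_n^j$ with full relations. Identifying $I_i \leftrightarrow i$ and $M_{p_j}^{n+1} \leftrightarrow v_{\sigma_j}$ gives a bijection between primitive idempotents of $\End_A(M)$ and vertices of $\overline Q$.

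For the arrows, $\Hom$-spaces among summands have bases of weight-$0$ oriented intersections, and arrows of the quiver of $\End_A(M)$ correspond to those intersections that are not factorizations of two others through an intermediate arc in $\calp_\mathfrak{M}$. Restricted to injective summands, the canonical isomorphism $\End_A(DA) \cong A = kQ/I$ reproduces $Q$ as a subquiver with the relations $I$. For morphisms involving a new summand $M_{p_j}^{n+1}$, recall that $\zg_{p_j}^{n+1}$ is a diagonal of the polygon $t^{-1}(\bbp_j)$ of $\zD_I$ starting at the distinguished $\bpoint$-vertex $p_j$. A direct inspection of the fan of $\zD_I$-arcs incident to $p_j$ together with the weight computation shows that the unique minimal weight-$0$ oriented intersection of $\zg_{p_j}^{n+1}$ with an arc of $\zD_I$ is the one with the injective arc $t^{-1}(\ell^*_{j,n})$ corresponding to $t(\aaa_n^j)$; this furnishes the new arrow $\bbb_j$ of $\overline Q$. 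At the second endpoint of $\zg_{p_j}^{n+1}$, the $n$-completeness hypothesis of Theorem~\ref{thm:ncomp}, in particular the degree-one condition on the source $\ell^*_{j,0}$, forces the local configuration to be rigid enough that any oriented intersection with an arc in $\zD_I$ either has positive weight or factors as a composition of $\bbb_j$ with an arrow of $Q$, contributing no new arrow. Finally, any non-identity morphism between $M_{p_i}^{n+1}$ and $M_{p_j}^{n+1}$ for $i \neq j$ factors through an injective summand. This identifies the quiver of $\End_A(M)$ with $\overline Q$.

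For the relations, those among paths of $Q$ are exactly $I$ by the identification $\End_A(DA) \cong A$. For each walk $\sigma_j$, the composition $\aaa_n^j \bbb_j$ vanishes: geometrically, the oriented intersection for $\aaa_n^j$ sits at the endpoint of the injective arc $t^{-1}(\ell^*_{j,n})$ obtained by anti-twisting the shared $\rpoint$-endpoint $\ell^*_{j,n-1} \cap \ell^*_{j,n}$, whereas the one for $\bbb_j$ sits at the other endpoint $p_j$; since these two $\bpoint$-endpoints of $t^{-1}(\ell^*_{j,n})$ are distinct, no concatenated oriented intersection exists and the product of the two morphisms is zero, producing the new relation $\aaa_n^j \bbb_j \in \overline I$. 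A routine case analysis confirms there are no further relations, giving $\End_A(M) \cong k\overline Q/\overline I$.

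The main obstacle is the local verification at the second endpoint of each $\zg_{p_j}^{n+1}$: one must exclude any arrow not predicted by $\overline Q$. Here the $n$-completeness assumption is essential, since the degree-one condition on the source of each walk with full relations constrains the neighborhood of $t^{-1}(\bbp_j)$ in $\zD_I$ so that every candidate arrow at this second endpoint factors through the pendant arrow $\bbb_j$ and arrows of $Q$ already accounted for.
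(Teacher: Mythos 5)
Your proof takes essentially the same approach as the paper's: read off the quiver and relations of $\End_A(M)$ from the arc system $\calp_\mathfrak{M}$ of Lemma \ref{lem:3.12} via the geometric model. You spell out considerably more of the quiver-and-relations bookkeeping than the paper does, which merely asserts that the algebra associated with the arc system is $k\overline{Q}/\overline{I}$.

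One step does deserve a genuine justification rather than the appeal to $\End_A(DA)\cong A$. That isomorphism alone does not yield that the full subquiver of $\End_A(M)$ on the injective vertices is exactly $Q$ with relations $I$: an arrow of $Q$ could disappear if the corresponding irreducible morphism $I_j\to I_i$ became reducible by factoring through some new summand $M_{p_k}^{n+1}$, and dually a relation of $I$ could fail if a nonzero factorization through an $M_{p_k}^{n+1}$ appeared. This is where the $n$-completeness hypothesis does real work: since the source $\ell^*_{j,0}$ of each full-relation walk has degree one, the injective arc $t^{-1}(\ell^*_{j,0})$ is isotopic to a boundary segment, which pins down the fans at the two endpoints $p_j$ and $p_j'$ of $\zg_{p_j}^{n+1}$ so that its only weight-$0$ intersection with an arc of $\zD_I$ is the one with $t^{-1}(\ell^*_{j,n})$, and no intersection between two injective arcs factors through it. You gesture at this for the second endpoint, but the same local weight computation is needed at $p_j$ and is what underwrites both the subquiver claim and your final "routine case analysis"; it would be worth making it explicit. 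The remaining identifications (new vertices $\leftrightarrow \zg_{p_j}^{n+1}$, new arrow $\bbb_j$, new relation $\aaa_n^j\bbb_j$ from the distinct endpoints of $t^{-1}(\ell^*_{j,n})$) are correctly set up and match the paper.
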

\begin{proof}
By the proof of the above theorem, $\mathfrak{M}$ corresponds to an admissible $n$-partial triangulation $\zD_I\cup \{\zg_{p_j}^n\}$. The gentle algebra associated with it is isomorphic to $k\overline{Q}/\overline{I}$, where the extra $s$ vertices $v's$ arise from $\zg_{p_j}^n, 1\leqslant j \leqslant s$. Thus, we have $\End_A(M)\cong k\overline{Q}/\overline{I}$, where $M$ is an additive generator of $\mathfrak{M}$.
\end{proof}

It can be seen from the above proposition and Theorem \ref{thm:ncomp} that the cone of a $n$-complete gentle algebra is $(n+1)$-complete, which is a special case of a main result in \cite{I11}

\begin{example}
Figure \ref{fig:ex2} provides an example that illustrates the arc systems $\calp_\mathfrak{M}$, $\zD_T$ and $\calp_{T^\perp}$ respectively corresponding to subcategories $\mathfrak{M}$, $\mathfrak{P}(\mathfrak{M})=\add T$ and $T^\perp$. The global dimension of the associated gentle algebra is five.
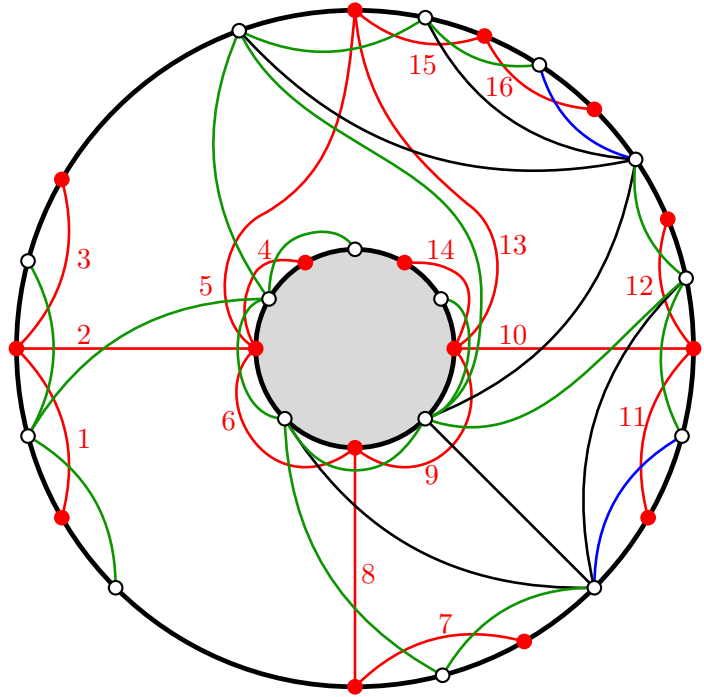
\begin{figure}
\begin{center}
\begin{tikzpicture}[>=stealth,scale=.6]
\draw[line width=1.8pt,fill=white] (0,0) circle (7.5cm);
				\draw[line width=1.8pt,fill=gray!30] (0,0) circle (2.2cm);
\path 
				(90:7.5) coordinate (r1)
				(67.5:7.5) coordinate (r2)
				(45:7.5) coordinate (r3)
				(22.5:7.5) coordinate (r4)
				(0:7.5) coordinate (r5)
				(-30:7.5) coordinate (r6)
				(-60:7.5) coordinate (r7)
				(-90:7.5) coordinate (r8)
				(-150:7.5) coordinate (r9)
				(180:7.5) coordinate (r10)
				(150:7.5) coordinate (r11)
				(60:2.2) coordinate (r12)
				(0:2.2) coordinate (r13)
				(-90:2.2) coordinate (r14)
				(180:2.2) coordinate (r15)
				(120:2.2) coordinate (r16)
				
				(78:7.5) coordinate (b1)
				(57:7.5) coordinate (b2)
				(34:7.5) coordinate (b3)
				(12:7.5) coordinate (b4)
				(-15:7.5) coordinate (b5)
				(-45:7.5) coordinate (b6)
				(-75:7.5) coordinate (b7)
				(-135:7.5) coordinate (b8)
				(-165:7.5) coordinate (b9)
				(165:7.5) coordinate (b10)
				(110:7.5) coordinate (b11)
				(90:2.2) coordinate (b12)
				(30:2.2) coordinate (b13)
				(-45:2.2) coordinate (b14)
				(-135:2.2) coordinate (b15)
				(150:2.2) coordinate (b16)	;

				\draw[bend right,line width=1pt,red] (r1) to (r2) to (r3);
                \draw[cyan,line width=1pt,red] (r1) to[out=-95,in=30](-2,3) to[out=-155,in=150](r15);
				\draw[cyan,line width=1pt,red] (r1) to[out=-85,in=140](2.5,3.2) to[out=-35,in=30](r13);
			    \draw[bend right,line width=1pt,red] (r4) to (r5) to (r6);
			    \draw[line width=1pt,red] (r5) to (r13);
			    \draw[line width=1pt,red] (r8) to (r14);
			    \draw[line width=1pt,red] (r10) to (r15);
				\draw[bend right,line width=1pt,red] (r7) to (r8) ;
				\draw[bend right,line width=1pt,red] (r9) to (r10) to (r11);
			    \draw[cyan,line width=1pt,red](r12) to[out=0,in=150]  (2.2,1.7) to[out=-30,in=65] (r13) to[out=-50,in=40] (2,-2.3) to[out=-140,in=-40] (r14) to[out=-140,in=-40] (-2,-2.3) to[out=140,in=-140] (r15)   to[out=120,in=-120]  (-2.2,1.7) to[out=50,in=170] (r16);

			   \draw[bend left,line width=1pt,black] (b3) to (b1) ;
			   \draw[bend left,line width=1pt,black] (b3) to (b11) ;
			   \draw[bend left,line width=1pt,black] (b3) to (b14) ;
				\draw[bend left,line width=1pt,black] (b6) to (b4) ;
				\draw[bend left,line width=1pt,black] (b6) to (b15) ;
				\draw[line width=1pt,black] (b6) to (b14) ;

				\draw[bend right,line width=1pt, color=dark-green] (b3) to (b4);
				\draw[bend right,line width=1pt, color=dark-green] (b6) to (b7);

				\draw[bend right,line width=1pt, color=dark-green] (b11) to (b1) to (b2) ;
				\draw[bend right,line width=1pt, color=blue] (b2) to (b3) ;
				\draw[bend right,line width=1pt, color=blue] (b5) to (b6)  ;
				\draw[bend right,line width=1pt, color=dark-green] (b4) to (b5)  ;
				\draw[bend right,line width=1pt, color=dark-green] (b8) to (b9) to (b10)  ;
				\draw[cyan,line width=1pt,dark-green] (b13) to[out=0,in=10](b14)
				to[out=-120,in=0](0,-2.7) to[out=180,in=-60](b15) to[out=180,in=-90](-2.6,0) to[out=90,in=180](b16)  to[out=90,in=-160](-1.2,2.5)  to[out=20,in=120](b12);
				\draw[bend right,line width=1pt, color=dark-green] (b11) to (b16)to (b9) ;
				\draw[bend right,line width=1pt, color=dark-green] (b15) to (b7) ;
				\draw[cyan,line width=1pt,dark-green] (b14) to[out=-20,in=-140](b4) ;
				\draw[cyan,line width=1pt,dark-green] (b14) to[out=20,in=-90](2.8,1.2) to[out=90,in=-65](b11) ;
			
				\draw[thick, fill=white] 
				(b1) circle (0.15cm)
				(b2) circle (0.15cm)
				(b3) circle (0.15cm)
				(b4) circle (0.15cm)
				(b5) circle (0.15cm)
				(b6) circle (0.15cm)
				(b7) circle (0.15cm)
				(b8) circle (0.15cm)
				(b9) circle (0.15cm)
				(b10) circle (0.15cm)
				(b11) circle (0.15cm)
				(b12) circle (0.15cm)
				(b13) circle (0.15cm)
				(b14) circle (0.15cm)
				(b15) circle (0.15cm)
				(b16) circle (0.15cm)
				;	
				
				\draw[thick,red, fill=red] 
				(r1) circle (0.15cm)
				(r2) circle (0.15cm)
				(r3) circle (0.15cm)
				(r4) circle (0.15cm)
				(r5) circle (0.15cm)
				(r6) circle (0.15cm)
				(r7) circle (0.15cm)
				(r8) circle (0.15cm)
				(r9) circle (0.15cm)
				(r10) circle (0.15cm)
				(r11) circle (0.15cm)
				(r12) circle (0.15cm)
				(r13) circle (0.15cm)
				(r14) circle (0.15cm)
				(r15) circle (0.15cm)
				(r16) circle (0.15cm)
				;

				\draw(-6,-2) node[red] {{$1$}};
				\draw(-6,0.3) node[red] {{$2$}};
				\draw(-6,2) node[red] {{$3$}};
				\draw(-2,2.2) node[red] {{$4$}};
				\draw(-3.3,1.4) node[red] {{$5$}};
				\draw(-2.8,-1.6) node[red] {{$6$}};
				
				\draw(2,-6.1) node[red] {{$7$}};
				\draw(0.3,-5) node[red] {{$8$}};
				\draw(1.7,-2.8) node[red] {{$9$}};
				\draw(3.5,0.3) node[red] {{$10$}};
				\draw(6.15,-1.5) node[red] {{$11$}};
				\draw(6.3,1.4) node[red] {{$12$}};
				\draw(3.5,2.3) node[red] {{$13$}};
				\draw(1.9,2.2) node[red] {{$14$}};
				\draw(1.5,6.3) node[red] {{$15$}};
				\draw(3.2,5.8) node[red] {{$16$}};

\end{tikzpicture}
\caption{The arcs in the injective dissection $\zD_I$ is colored green. After replacing the two injective arcs associated with vertices $7$ and $12$ by the blue arcs, we obtain a tilting dissection $\zD_T$. $\calp_{\mathfrak{M}}$ is the union of the set of green arcs and the set of blue arcs. Furthermore, after adding the black arcs, we get $\calp_{T^\perp}$.}\label{fig:ex2}
\end{center}
\end{figure}
\end{example}

{\bf The author declares that he has no conflict of interest.

No datasets were generated or analyzed during the current study.}

\end{document}